 \newtheorem{thm}{Theorem}[section]
 \newtheorem{cor}[thm]{Corollary}
 \newtheorem{lemma}[thm]{Lemma}
 \theoremstyle{definition}
 \newtheorem{defn}[thm]{Definition}
 \theoremstyle{rem}{
 \newtheorem*{rem}{Remark}
 \newtheorem*{ex}{Example}
 \numberwithin{equation}{section}

\begin{document}

%-------------------------------------------------------------------------
% editorial commands: to be inserted by the editorial office
%
%\firstpage{1} \volume{228} \Copyrightyear{2004} \DOI{003-0001}
%
%
%\seriesextra{Just an add-on}
%\seriesextraline{This is the Concrete Title of this Book\br H.E. R and S.T.C. W, Eds.}
%
% for journals:
%
%\firstpage{1}
%\issuenumber{1}
%\Volumeandyear{1 (2004)}
%\Copyrightyear{2004}
%\DOI{003-xxxx-y}
%\Signet
%\commby{inhouse}
%\submitted{March 14, 2003}
%\received{March 16, 2000}
%\revised{June 1, 2000}
%\accepted{July 22, 2000}
%
%
%
%---------------------------------------------------------------------------
%Insert here the title, affiliations and abstract:
%

\title[Differentiation of IDMC Measures]{A Theory of Intermittency Differentiation of 1D Infinitely Divisible Multiplicative Chaos Measures}

%----------Author 1
\author[D. Ostrovsky]{Dmitry Ostrovsky}

%%\address{%
%%195 Idlewood Drive, Stamford, CT 06905, USA
%%}

\email{dm\_ostrov@aya.yale.edu}

%%\thanks{This work was completed with the support of our
%%\TeX-pert.}

%----------classification, keywords, date
\subjclass{Primary  60E07, 60G51, 60G57; Secondary 28A80, 60G15, 81T99}

\keywords{Multiplicative chaos, intermittency, L\'evy process, total mass, random measure, high temperature expansion}

\date{December 29, 2017}
%----------additions
%%\dedicatory{To my boss}
%%% ----------------------------------------------------------------------

\begin{abstract}
A theory of intermittency differentiation is developed for a general class of 1D Infinitely Divisible Multiplicative Chaos 
measures. The intermittency invariance of the underlying infinitely divisible field is established and utilized to
derive a Feynman-Kac equation for the distribution of the total mass of the limit 
measure by considering a stochastic flow in intermittency. The 
resulting equation prescribes the rule of intermittency differentiation for a general functional of the total mass
and determines the distribution of the total mass and its dependence structure to the first order in intermittency. 
%%The intermittency differentiation rule is also obtained for a class of non-local functionals of the limit measure that 
%%are invariant under differentiation and allows the computation of the full high temperature expansion. 
A class of non-local functionals of the limit measure extending the total mass is introduced and shown to be invariant 
under intermittency differentiation making the computation of the full high temperature expansion of the total mass distribution possible in principle. 
For application,
positive integer moments and covariance structure of the total mass are considered in detail.    
\end{abstract}

%%% ----------------------------------------------------------------------
\maketitle
%%% ----------------------------------------------------------------------
%\tableofcontents

%% Need to say: law of total mass is useful for calculating fluctuatuings of extemes of the underkying field. 

\section{Introduction}
\noindent
In this paper we advance the theory of Infinitely Divisible Multiplicative Chaos (IDMC) measures, also known as 
limit log-infinitely divisible random measures, on the unit interval. Their study was initiated by 
Mandelbrot \cite{secondface}, \cite{Lan} and Bacry \emph{et. al.} \cite{MRW} in the limit lognormal case, extended to the compound Poisson case 
by Barral and Mandelbrot \cite{Pulses}, and developed in the general infinitely divisible case by Bacry and Muzy \cite{BM1}, \cite{BM} based 
on the theory of Kahane
\cite{K2}, spectral representations of infinitely divisible processes of Rajput and Rosinski \cite{RajRos}, and
conical set constructions of Barral and Mandelbrot \cite{Pulses} and Schmitt and Marsan \cite{Schmitt}, and recently
extended to multiple dimensions by Chainais \cite{Chainais} and Rhodes and Vargas \cite{RVmulti}.
A different conical construction was introduced by Barral and Jin \cite{barralID}, who studied the problems of
non-degeneracy of the limit measure and of the finiteness of its positive and negative integer moments in their model of 
infinitely divisible multiplicative chaos. We noted in \cite{Me5} that the underlying infinitely divisible field of the Bacry-Muzy construction
exhibits certain invariances and derived explicit multiple integral representations for single and joint positive integer moments of the total mass of the Bacry-Muzy measure in \cite{MeLMP}. 

\subsection{Why multiplicative chaos?}
The interest in multiplicative chaos measures stems from their properties of stochastic self-similarity (also known as continuous dilation invariance), of being grid-free and stationary, and having exactly multiscaling moments. What this means is that %%the increments of the process,
the random measure $M(t, t+s)$ of the interval $(t, t+s)$
%% $\delta_l M(t) \equiv M(t+l)-M(t)$ 
satisfies the equality in law
\begin{equation}\label{cascade}
M(t, t+\gamma s) = W_\gamma M(t, t+s)
\end{equation}
and its distribution is independent of $t.$ The multiplier $W_\gamma$ is a positive stochastic multiplier that is stochastically  
independent of $M(t, t+s),$ $0<s<1$ and $0<\gamma\leq 1.$ The  multiplier
must necessarily be log-infinitely divisible  as was first pointed out by Novikov \cite{Novikov} long before such processes were
constructed mathematically.  The physical meaning of this equation is that the law of the process on smaller scales
is determined by that on the larger scales, \emph{i.e.} it is a continuous multifractal cascade. Moreover, it is clear that the scaling 
exponent of the moments of $M(t, t+\gamma s)$ as a function of the scale $\gamma$ is determined by the law of the multiplier
and in general is not a linear function of the moment order $q,$ \emph{i.e.} the process exhibits multiscaling.  
\begin{equation}
\textbf{E} [M(t, t+\gamma s)^q] = const(q, s) \,\textbf{E}[W_\gamma^q]. 
\end{equation} 
For example, in the case of Gaussian multiplicative chaos, $W_\gamma$ is lognormal so that the multiscaling spectrum is
a parabola \cite{MRW}, \cite{secondface}, \cite{Lan}.

Due to these remarkable properties
and the complexity of mathematical problems that they pose such as understanding the stochastic dependence structure
of the limit measure, multiplicative chaos measures are generating a significant level of interest in mathematical physics, especially in the context of conformal field theory and Liouville quantum gravity, cf. \cite{Jones}, \cite{BenSch}, \cite{DS}, \cite{RV1}, \cite{RV3} and in statistical modeling of fully intermittent 
turbulence \cite{Frisch}. In fact, the latter
application was the primary motivating factor for the introduction of multifractal processes and its continued development. 
For example, Mandelbrot's celebrated limit lognormal model of energy dissipation \cite{secondface} was given a mathematically rigorous formulation
only quite recently with the advent of 3-dimensional Gaussian multiplicative chaos measure \cite{RVrevis}. 
The Bacry-Muzy construction, which allows the underlying field to be generated by a general infinitely divisible distribution, \emph{i.e.} is not restricted to be gaussian, was motivated by models of turbulent velocity fields that are based on the Poisson and other infinitely divisible 
distributions, cf. \cite{CD95}, \cite{CGH}, \cite{Novikov94}, \cite{She}. The practical interest in non-gaussian fields is that the associated multipliers are no longer lognormal so that the multiscaling spectrum is not restricted to a parabola, %%which is supported by empirical studies of turbulent flows ... and 
which adds flexibility to such models, cf. \cite{Chainais}. 

%%so that the Bacry-Muzy construction is perhaps the most general construction satisfying 
%%exact continuous dilation invariance on the interval. 

An essential feature of IDMC measures is that they are defined as the exponential functional of a regularized,
\emph{logarithmically correlated}, infinitely divisible field in the limit of zero regularization. In addition to
being a useful tool in stochastic modeling due to their multifractal properties, these measures
are also of interest as a tool for studying the underlying field. Gaussian logarithmically correlated fields, cf. \cite{RVLog}, 
appear in statistical mechanics of disordered energy landscapes \cite{ClD}, 
\cite{Fyo10}, limiting statistics that arise in random matrix theory \cite{FKS}, \cite{FyodSimm}, \cite{Hughes}, 
and even statistics of the Riemann zeta function on the critical line \cite{BK}, \cite{YK}, \cite{Menon}.
One is interested in particular in the distribution of extremes of such fields, which exhibit highly non-classical behavior. 
As it was first discovered in pioneering works \cite{FyoBou} and \cite{FLDR} using methods of statistical physics and followed by \cite{Cao}, \cite{Cao2}, \cite{FLDR2}, \cite{FLDR3}, \cite{Me16}, 
conjectured laws of the total mass of the Gaussian Multiplicative Chaos (GMC) measures on the circle and interval yield precise asymptotic distributions of extremes of the underlying gaussian fields such as restrictions of the 2D Gaussian Free Field to these and similar geometries, for example. The distribution of the maximum of the field was rigorously related to the law of the total mass of the critical GMC in \cite{Madmax}. Further, fluctuations of counting statistics that converge to $H^{1/2}$ gaussian fields can be rigorously quantified by means of the law of the total mass of the corresponding GMC measure, thereby connecting random matrix and GMC theories, cf. \cite{joint}. In addition, the moments of the total mass of the Bacry-Muzy limit measure in 1D are known to be given by the Selberg integral
on the interval, cf. \cite{MRW}, and by the Morris integral on the circle, cf. \cite{FyoBou}, hence connecting
GMC theory with many areas of mathematics, where these integrals occur \cite{ForresterBook}. 
One expects that properties of non-gaussian, logarithmically correlated, infinitely divisible fields 
can be similarly studied by means of the associated IDMC measures. We refer the interested reader to \cite{RV2} for a general review of GMC, to \cite{MeIMRN} and \cite{Menon} 
for detailed reviews of the Bacry-Muzy GMC on the interval and to \cite{Me5} and \cite{MeLMP} for the Bacry-Muzy IDMC  on the interval.

%%MUST SAY the field is logaritghmically CORRELATED

%%Of all IDMC measures, the Gaussian Multiplicative Chaos (GMC) measure, also known as the limit lognormal measure in the literature, has gained the greatest amount of popularity as it appears in such 
%%diverse fields of mathematical physics as conformal field theory and quantum gravity \cite{DS}, statistical mechanics of 
%%%disordered energy landscapes and extrema of the 2D gaussian free 
%%field \cite{FyoBou}, \cite{FLDR},  \cite{FLDR2}, \cite{FyodSimm}, \cite{Madmax}, \cite{Me16}, a theory of conformal weldings \cite{Jones}, limits of $H^{1/2}$ statistics ... , and 
%%even in conjectured   ....  and some exact ... applications to the behavior of the Riemann zeta function on the critical line \cite{YK}, \cite{Menon}.

\subsection{What is intermittency differentiation?}
A particularly important open problem in the theory of multiplicative chaos is to compute the distribution of the total mass of the limit measure and, more generally, understand its stochastic dependence structure as in most of the aforementioned 
areas the objects of interest can be reduced to questions about the total mass. The importance of this problem is also
evident from Eq. \eqref{cascade} for it implies that the law of the process on all scales less than 1 is determined by
the law on scale 1, \emph{i.e.} the total mass $M(0,1).$  The primary challenge of this problem is that
the underlying infinitely divisible field has strongly stochastically dependent increments so the usual Markovian techniques that work for
L\'evy processes do not apply. In addition, the limit measure is defined as a limit of the exponential functional of this field 
in the limit of zero regularization, where the field diverges, so one has to deal with a singular limit of a strongly stochastically-dependent 
process. Moreover, the recovery of the total mass distribution from its moments is not possible in the sense of the classical moment problem as 
the moments diverge at any level of intermittency for most multiplicative chaos measures. 

Let $\omega_{\mu, \varepsilon}(u)$ denote the underlying logarithmically correlated field. It is characterized by the
intermittency parameter $\mu$ and its Gaussian component $\sigma^2$ and  L\'evy spectral function $d\mathcal{M}(u),$ 
the latter two parameters describing its infinitely divisible distribution. 
For example, in the gaussian case, $d\mathcal{M}(u)=0,$ the field is determined by its covariance $\textbf{Cov}(
\omega_{\mu, \varepsilon}(u), \,\omega_{\mu, \varepsilon}(v)) \thicksim -\mu \log |u-v|,$
up to $\varepsilon-$dependent regularization near $u=v.$ The limit measure is then defined to be
\begin{equation}
M_{\mu}(a, b)=\lim\limits_{\varepsilon\rightarrow 0}\int\limits_a^b \exp\bigl(\omega_{\mu,\varepsilon}(u)\bigr) \, du.
\end{equation}
The problem is to compute the law of $M_{\mu}(0, 1).$ To overcome the aforementioned 
difficulties for the Bacry-Muzy GMC measure on the interval, we introduced in \cite{Me2} a novel mathematical technique of intermittency 
differentiation that replaced time with intermittency and the non-existent Markov property of the underlying gaussian field
with intermittency invariance. This technique allowed us to derive rules of intermittency differentiation for
the total mass and its dependence structure,
\emph{i.e.} the rules for computing
\begin{subequations}
\begin{align}
\frac{\partial}{\partial \mu} &\textbf{E} [F(M_{\mu}(0, 1)], \label{mudiff}\\
\frac{\partial}{\partial \mu} &\textbf{E} \Bigl[F_1\bigl(M_{\mu}(I_1)\bigr)\,F_2\bigl(M_{\mu}(I_2)\bigr)\Bigr],\; I_1, \,I_2\subset (0,1), \label{mudiffmulti}
\end{align}
\end{subequations}
for general test functions
of the total mass in the form of a non-local Feynman-Kac equation. %%These equations in turn led to the intermittency renormalization
The rule of intermittency differentiation in turn led to the intermittency renormalization
solution to the moment problem, which is based on the special property of GMC,
\begin{equation}
%%\begin{align}
\frac{\partial^n}{\partial \mu^n}\Big\vert_{\mu=0} \textbf{E} \Bigl[\bigl(M_{\mu}(0, 1)-1\bigr)^k\Bigr] = 0,\; k>2n, \label{renormone} %%\\
%%\frac{\partial^n}{\partial\mu^n}\Big\vert_{\mu=0} & {\bf E}\Bigl[\bigl(M_\mu(I_1)-|I_1|\bigr)^{k_1}\bigl(M_\mu(I_2)-|I_2|\bigr)^{k_2}\Bigr] = 0,\;  k_1+k_2>2n,\label{renormtwo}
%%\end{align}
\end{equation}
and showed how to recover the distribution from the moments
by systematically removing infinity from them, \emph{i.e.} how to reconstruct the Mellin transform $\textbf{E}[M_{\mu}(0, 1)^q],$ $q\in\mathbb{C},$ from the expansion of the positive integer moments in powers of $\mu,$
%%\begin{equation}
%%\frac{\partial^k}{\partial \mu^k}\Big\vert_{\mu=0} \textbf{E} [M_{\mu}(0, 1)^n],\; k, n\in \mathbb{N},
%%\end{equation}
cf. \cite{Me3}. Finally, in \cite{Me4} we summed the intermittency (high temperature) expansion of the Mellin transform of the total mass 
in closed form 
and proved that the resulting formula is the Mellin transform of a valid positive probability distribution, which we termed 
the Selberg integral probability distribution, see \cite{MeIMRN}, \cite{Me14}
for its detailed analytic and probabilistic analysis, respectively, which is then
naturally conjectured to be the distribution of the total mass. Interestingly, the same formula for the Mellin transform was obtained independently using
a different technique in \cite{FLDR}. It is still an open problem to verify our conjecture, see \cite{Me16} for a review of
the Selberg integral probability distribution and the construction of the Morris integral probability distribution, which is
conjectured, following the work of \cite{FyoBou}, to be the distribution of the total mass of the GMC measure on the circle. We refer the reader to \cite{MeIntRen} for a detailed discussion of these and other related conjectures. 

\subsection{A sumary of key results}
The contribution of this paper is to extend the technique of intermittency differentiation to the general IDMC measure in 1D.\footnote{
Our technique is not limited to 1D. The interested reader can see it applied to multidimensional GMC measures in \cite{MeIntRen}. 
We restrict ourselves to 1D in this paper for simplicity.} 
Our main contribution is the derivation of intermittency differentiation rules
for both the total mass of the IDMC measure, cf. Eq. \eqref{mudiff},  and its dependence structure, cf. Eq. \eqref{mudiffmulti},
%%\begin{equation}
%%\frac{\partial}{\partial \mu} \textbf{E} [F_1(M_{\mu}(I_1)F_2(M_{\mu}(I_2)],\; I_1, \,I_2\subset (0,1),
%%\end{equation}
in the form
of non-local Feynman-Kac equations. These equations relate the intermittency derivative 
of a class of functionals of the total mass and the L\'evy-Khinchine formula of the underlying infinitely divisible
distribution, \emph{i.e.} $\sigma^2$ and $d\mathcal{M}(u).$ %%For example, 
We believe our results to be new for all multiplicative chaos measures other than the GMC and 
a first major step towards to the computation of the distribution of the total mass in the non-gaussian case. 
In particular, they allow us to explicitly compute the intermittency derivative of the distribution 
at zero intermittency. Let $M_\mu \triangleq M_\mu(0, 1)$ denote the total mass. 
\begin{align} \frac{\partial}{\partial \mu}\Big\vert_{\mu=0} {\bf E}\bigl[F(M_{\mu})\bigr]  
& = -\sigma^2 F^{(2)}(1)\!\!\int\limits_{\{s_1<s_2\}} 
\log|s_1-s_2|\,ds^{(2)}- \nonumber \\ &- \sum\limits_{k=2}^\infty F^{(k)}(1)\!\!\!\int\limits_{\mathbb{R}\setminus \{0\}} 
(e^u-1)^k\,d\mathcal{M}(u) \!\!\!\!\!\!\!\int\limits_{\{s_1<\cdots<s_k\}} \!\!\!\!\!
\log |s_1-s_k|\,ds^{(k)},
\end{align}
thereby determining the total mass to the first order in intermittency, 
and its covariance structure 
\begin{equation}
{\bf Cov}\Bigl(\log M_\mu(t, t+\tau), \,\log M_\mu(0, \tau)\Bigr) = -\mu \log t\Bigl(\sigma^2 + \int\limits_{\mathbb{R}\setminus \{0\}} u^2 
\,d\mathcal{M}(u)\Bigr) + O(\tau),
\end{equation}
in the limit $\tau\rightarrow 0.$ 
The significance of the latter result is that it allows one to infer the intermittency parameter from empirical data given
the knowledge of the underlying infinitely divisible distribution as was first pointed out in the gaussian case in \cite{MRW}. Our result shows
that the covariance structure of the total mass remains logarithmic in the general infinitely divisible case. 
In addition, we identify a class of non-local functionals of the limit measure
\begin{equation}
v(\mu, F, t_1\cdots t_n)\triangleq \lim\limits_{\varepsilon\rightarrow 0} {\bf E}\Bigl[F\Bigl(\int\limits_0^1 e^{\omega_{\mu,\varepsilon}(u)}\, du\Bigr) 
e^{ \omega_{\mu,\varepsilon}(t_1)+\cdots+\omega_{\mu,\varepsilon}(t_n)}\Bigr],
\end{equation}
that are invariant under intermittency differentiation and derive the differentiation rule for them, which allows the computation
of intermittency derivatives of all orders, \emph{i.e.} the full high temperature (low intermittency) expansion. 
The intermittency derivative of $v(\mu, F, t_1\cdots t_n)$ is given in our most general result, Theorem \ref{IDMC}.
Here we record the special case of $n=0$ to explain the need for considering the functionals $v(\mu, F, \,t_1\cdots t_n)$
in the computation of higher intermittency derivatives. 
\begin{gather}
\frac{\partial}{\partial \mu} {\bf E}\bigl[F(M_{\mu})\bigr]  
 = -\sigma^2 \!\!\!\!\int\limits_{\{s_1<s_2\}} v(\mu, F^{(2)}\!, \, s_1, s_2) 
\log |s_1-s_2|\,ds^{(2)}- \nonumber \\ - \sum\limits_{k=2}^\infty \,\int\limits_{\mathbb{R}\setminus \{0\}} 
(e^u-1)^k\,d\mathcal{M}(u) \int\limits_{\{s_1<\cdots<s_k\}} \!\!\!\!\!\!\!\!\!\!\!\!
v(\mu, F^{(k)}\!,\, s_1\cdots s_k) \log |s_1-s_k|\,ds^{(k)}. \label{firstderiv} 
\end{gather}
 
Our derivation of the intermittency differentiation rule is exact in the sense of equality of formal power series but not 
mathematically rigorous as we shun all questions of convergence while operating with infinite series. The rest of our results 
are rigorous. In particular, we give a rigorous derivation of the key combinatorial property of the underlying infinitely divisible field in Lemma \ref{BLemma}, which is responsible for the terms on the right-hand side of Eq. \eqref{firstderiv} and, in particular, explains why GMC is unique among all the other IDMC measures. 
The lemma  says that for any continuous function $\mathfrak{f}(\delta,s)$ vanishing as $\delta\rightarrow 0$ and 
$s_1<\cdots <s_n$ we have the estimate
\begin{subequations}
\begin{align}
{\bf E} \Bigl[\prod_{j=1}^2 e^{\mathfrak{f}(\delta,s_j)+\omega_{\mu, \varepsilon}(s_j)} - 1 \Bigr] = &
\prod_{j=1}^2 \bigl(e^{\mathfrak{f}(\delta,s_j)}-1\bigr) - \delta \log |s_2-s_1| \times \nonumber \\ & \times \Bigl(\sigma^2+ \int\limits_{\mathbb{R}\setminus \{0\}}  (e^u-1)^2 d\mathcal{M}(u)\Bigr)+
o(\delta),\\
{\bf E} \Bigl[\prod_{j=1}^n e^{\mathfrak{f}(\delta,s_j)+\omega_{\mu, \varepsilon}(s_j)} - 1 \Bigr] = &
\prod_{j=1}^n \bigl(e^{\mathfrak{f}(\delta,s_j)}-1\bigr) -
\delta \log |s_n-s_1| \times \nonumber \\ &\times \int\limits_{\mathbb{R}\setminus \{0\}}  (e^u-1)^n d\mathcal{M}(u)+
o(\delta),
\end{align}
\end{subequations}
for $n>2$ so that the GMC intermittency derivative only requires the second derivative of the test function, while 
the general IDMC intermittency derivative requires the second and all higher derivatives, cf. Eq. \eqref{firstderiv}. 
We also give a rigorous derivation of  
the differentiation rule for positive integer moments of the total mass, \emph{i.e.} we prove Eq. \eqref{firstderiv} for $F(x)=x^n,$ $n\in \mathbb{N},$ in Theorem \ref{intidentity} thereby verifying
our main result in this case. While these results were first established in the gaussian setting in \cite{Me2} and \cite{Me3}, their extension to the general infinitely divisible setting is quite non-trivial and constitutes the primary technical innovation of the paper. 

The structure of the paper is as follows. In Section 2 we describe the key properties of the Bacry-Muzy construction. %% and 
%%formulate the problem of the total mass in non-technical terms, state our key results, and briefly explain how they advance the field. 
%%In Section 3 we review the Bacry-Muzy construction in detail. 
In Section 3 we state all of our results. In Section 4 we give the proofs. Section 5
concludes. In the Appendix we give the more technical details of the Bacry-Muzy construction and extend it to a general intensity measure.

%%\section{Formulation of the problem and summary of key results}
\section{A survey of IDMC chaos}
\noindent In this section we will review the infinitely divisible multiplicative chaos (IDMC) construction of Bacry and Muzy \cite{BM1} and \cite{BM}, including the formula for positive integer moments of the total mass that we first noted in \cite{MeLMP}. The Bacry-Muzy construction of IDMC measure on the unit interval is based on the idea
of exponentiating a regularized, infinitely divisible (ID), logarithmically correlated field in the limit of zero regularization. The details of how such fields are constructed are very technical and are relegated to the Appendix. Here
we focus instead on the most basic properties of the construction. %%to explain the set-up and main results of the paper to non-experts. 

Let us the denote the ID field by $\omega_{\mu, \varepsilon}(u).$ $\mu>0$ is the intermittency parameter,
it is a fundamental constant of the construction. Its range needs to be restricted for the limit measure to exist as explained below.
$\varepsilon$ is the regularization scale. It is well-known that an infinitely divisible probability distribution
is uniquely characterized by its L\'evy-Khinchine representation, cf. \cite{Steutel}. Let us write the
characteristic function of $\omega_{\mu, \varepsilon}(u)$ %%the distribution 
in the  form
\begin{equation}
{\bf{E}} \left[ e^{i q \omega_{\mu, \varepsilon}(u)} \right]=e^{-\mu\phi(q)\log\varepsilon},
\,\,\,q\in\mathbb{R},
\end{equation}
where the  L\'evy-Khinchine representation of $\phi(q)$ is 
\begin{equation}\label{phi}
\phi(q) = -\frac{iq\sigma^2}{2}  - \frac{q^2\sigma^2}{2} + 
\int\limits_{\mathbb{R}\setminus \{0\}} \Bigl(e^{iq u}-1-iq(e^u-1)
\Bigr) d\mathcal{M}(u).
\end{equation}
Note that it is normalized by $\phi(-i)=0,$ which is required by Kahane's theory \cite{K2} for the limit measure to exist. 
When the spectral function $\mathcal{M}(u)=0,$ the distribution is gaussian. 
It must be emphasized that $\omega_{\mu, \varepsilon}(u)$ is not a L\'evy process in $u,$ rather its increments are
stochastically dependent and the structure of their dependence is quite non-trivial and determined by Lemma \ref{main} 
given in the Appendix. Here we state the simplest non-trivial case of this dependence. Let $u_1\leq u_2$ and $q_1,\,q_2\in\mathbb{R}.$
\begin{align}
{\bf E}\Bigl[e^{\bigl(i q_1 \omega_{\mu,\varepsilon}(u_1)+
i q_2 \omega_{\mu,\varepsilon}(u_2)\bigr)
}\Bigr] = & e^{-\mu\log\varepsilon\bigl(\phi(q_1)+\phi(q_2)\bigr)}\times\nonumber \\  &\times e^{\mu\bigl(\phi(q_1+q_2)-\phi(q_1)-\phi(q_2)\bigr)\rho_\varepsilon(u_2-u_1)},
\end{align}
%%where $\rho_\varepsilon(u)$ is defined in Eq. \eqref{rho} 
where the function $\rho_\varepsilon(u),$ known as the intensity measure of the construction,\footnote{What we call
$\mu$ and $\rho$ is denoted by $\lambda^2$  and $\mu,$ respectively, in \cite{BM}.} is
\begin{equation}\label{rho}
\rho_{\varepsilon}(u) =
\begin{cases}
-\log |u|& \, \text{if $\varepsilon\leq |u|\leq 1$}, \\
- \log \varepsilon + \bigl(1-\frac{|u|}{\varepsilon}\bigr) & \, \text{if
$|u|<\varepsilon$},
\end{cases}
\end{equation}
and it is identically zero for $|u|>1.$ One observes that the process $\omega_{\mu, \varepsilon}(u)$ is a \emph{logarithmically correlated} ID field.
%%$\rho_\varepsilon(0)=-\log\varepsilon$ is \emph{logarithmically divergent} in the limit $\varepsilon\rightarrow 0$ and . 
This is a fundamental property of the Bacry-Muzy construction and of all known
multiplicative chaos constructions. The idea of using a logarithmically correlated field
in the construction of a multifractal process was introduced by Mandelbrot \cite{secondface}
in his attempt to give a mathematical formulation of the Kolmogorov-Obukhov \cite{Ko}, \cite{Obu} theory
of energy dissipation in developed turbulence by means of a precursor of the modern GMC theory. 

The IDMC measure is defined as the limit of the exponential functional of the field $\omega_{\mu, \varepsilon}(u)$
in the limit of zero regularization
\begin{equation}
M_{\mu}(a, b)=\lim\limits_{\varepsilon\rightarrow 0}\int\limits_a^b \exp\bigl(\omega_{\mu,\varepsilon}(u)\bigr) \, du.
\end{equation}
Its existence is a deep result as the field $\omega_{\mu, \varepsilon}(u)$ diverges in this limit so that the order of
integral and limit cannot be interchanged. It turns out that the limit exists only for a finite range of $\mu,$ 
and this range is determined by the condition\footnote{The non-degeneracy condition given in \cite{BM1} is
less stringent than Eq. \eqref{nondeg}, which is however sufficient in most
cases of interest such as those of the limit lognormal and 
Poisson measures.}
\begin{equation}\label{nondeg}
1+i\mu\phi'(-i) = 1-\mu\Bigl(\frac{\sigma^2}{2} + \int\limits_{\mathbb{R}\setminus \{0\}} \bigl(ue^{u}-e^u+1
\bigr) d\mathcal{M}(u)\Bigr)
>0.
\end{equation}
The limit measure has the stationarity property
\begin{equation}\label{station}
M_{\mu}(t,t+\tau) \overset{{\rm in \,law}}{=} M_{\mu}(0,\tau)
\end{equation}
and is 
non-degenerate in the sense of ${\bf E}[M_{\mu}(a, b)]=|b-a|.$

The self-similarity property of the limit measure follows
the logarithmic correlation property of the underlying field and the specific functional
form of the intensity measure in Eq. \eqref{rho}. The details are given in the Appendix.
Here we record the law of the multiplier in Eq. \eqref{cascade},
%%\begin{equation}
%%W_\gamma = \gamma e^{X_{-\mu\log \gamma}},
%%\end{equation}
%%In particular, it follows from \eqref{Xprocessdef} that 
\begin{equation}
\textbf{E}[e^{iq \log W_\gamma}] = \gamma^{iq-\mu\phi(q)},\; \gamma <1,
\end{equation}
which is log-infinitely divisible as expected.
The resulting scaling law of the moments is
\begin{equation}\label{qscaling}
{\bf E}\bigl[M_{\mu}(0, t)^q\bigr] = const(q)\,
t^{q-\mu\phi(-iq)}, \, t<1.
\end{equation}
Hence, $q-\mu\phi(-iq)$ is the multiscaling spectrum of
the limit measure. We have
\begin{equation}\label{mspectrum}
q-\mu\phi(-iq) = q-\mu\Bigl(\frac{\sigma^2}{2}(q^2-q) + \int\limits_{\mathbb{R}\setminus \{0\}} \bigl(e^{qu}-1-q(e^u-1)
\bigr) d\mathcal{M}(u)\Bigr).
\end{equation}
This function also controls the moments. 
The moments $q>1$ of $M_{\mu}(0, t)$ are finite under the
following necessary and sufficient conditions,
\begin{subequations}
\begin{align}
& q-\mu\phi(-iq)>1 \Longrightarrow {\bf E}[M^q_{\mu}(0, t)]<\infty, \label{finmom} \\
& {\bf E}[M^q_{\mu}(0, t)]<\infty \Longrightarrow
q-\mu\phi(-iq)\geq 1.
\end{align}
\end{subequations}

Finally, we can give an explicit multiple integral representation for the moments of the limit measure.
Given $m\in\mathbb{N},$  define the quantity $d(m)$ by
\begin{equation}\label{d}
d(m)\triangleq \sigma^2+\int\limits_{\mathbb{R}\setminus \{0\}} e^{(m-1)u}
(e^u-1)^2\,d\mathcal{M}(u).
\end{equation}
Then, the $n$th moment of the total mass is given by a generalized Selberg integral of dimension $n.$
Let $0\leq a<b\leq 1$ and $n$ satisfy Eq. \eqref{finmom}. 
\begin{equation}\label{singlemomformula}
{\bf E}\Bigl[\Bigl(\int\limits_a^b M_\mu(dt)\Bigr)^n\Bigr] = n!
\int\limits_{\{a<t_1<\cdots<t_n<b\}} \prod\limits_{k<p}^n |t_p-t_k|^{-\mu\,d(p-k)}
\,dt^{(n)}.
\end{equation}
This result is due to \cite{MRW} in the canonical gaussian case $(d\mathcal{M}(u)=0)$ and to \cite{MeLMP} in general. 

Throughout the rest of the paper we assume that the non-degeneracy condition in Eq. \eqref{nondeg} is satisfied,
\emph{i.e.} we work in the co-called sub-critical regime. We also adapt the following slight abuse of terminology 
by referring to the limit measure $dM_\mu$ as an IDMC measure if $d\mathcal{M}(u)\neq 0$ 
and GMC measure if $d\mathcal{M}(u)=0$ in Eq. \eqref{phi}. In the GMC case we let $\sigma=1.$

We note that the canonical Bacry-Muzy construction can be somewhat extended by allowing a more general
intensity measure than what was given in Eq. \eqref{rho}, cf. Eq. \eqref{rhogeneral}. The resulting construction is summarized in the Appendix. 
%%All of our results in subsequent sections work with the general intensity measure.

%%$r(t)$ in \eqref{rt} being replaced by a more general even function that satisfies $r(t)\thicksim t$ 
%%in the limit $t\rightarrow 0.$ %%so that $\rho_\varepsilon()$ in .... ... gets replaced with

%%What is important for our purposes is that most of the results of this section and, in particular, Lemma \ref{main} and all of our results in subsequent sections remain valid 

%%\section{Gaussian and Poisson multiplicative chaos}
We end this section with two main examples of IDMC measures: gaussian (GMC) 
and Poisson.
%%The main examples of IDMC measures are GMC and Poisson multiplicative chaos, which we describe next.
\subsection{Limit lognormal measure}
Let $\sigma=1,$ $\mathcal {M}(u)=0$ in Eq. \eqref{phi}. Then, the moments are given by the classical
Selberg integral,
\begin{equation}\label{singleLlog}
{\bf E}\bigl[ M_\mu(0,\,1)^n\bigr] = n!
\int\limits_{0<t_1<\cdots<t_n<1} \prod\limits_{k<p}^n |t_p-t_k|^{-\mu}
\,dt^{(n)}.
\end{equation}
Note that the non-degeneracy condition in Eq. \eqref{nondeg} amounts to
$0<\mu<2$ and that the moments become infinite for $n>2/\mu.$ 
The process $\omega_{\mu, \varepsilon}(u)$ in this case has covariance $-\mu\log|u-v|,$
\emph{i.e.}  represents an ideal $1/f$ noise and can be thought of as the
restriction of the 2D gaussian free field to the unit interval. It was first introduced in \cite{MRW}.
The multiscaling spectrum of the limit measure in Eq. \eqref{mspectrum} is the parabola
\begin{equation}
q-\mu\phi(-iq) = q-\frac{1}{2}\mu q (q-1).
\end{equation}
$\log W_\gamma$ is gaussian with mean $(1+\mu/2)\log\gamma$ and variance $-\mu\log\gamma.$

%This measure 
%%SAY that moiments are known. GIVE LAW of W

\subsection{Limit Log-Poisson measure}
Let $\sigma=0$ and $d\mathcal{M}(u) = \delta\bigl(u-log(c)\bigr)du$ in Eq. \eqref{phi}, \emph{i.e.}
the underlying distribution is a point mass at $\log(c),$ $c>0,$ $c\neq 1.$
\begin{equation}\label{P}
{\bf E}\bigl[ M_\mu(0,\,1)^n\bigr] = n!
\int\limits_{0<t_1<\cdots<t_n<1} \prod\limits_{k<p}^n |t_p-t_k|^{-\mu (c-1)^2 c^{p-k-1}}
\,dt^{(n)}.
\end{equation}
The non-degeneracy condition in Eq. \eqref{nondeg} is
\begin{equation}\label{nondegP}
0<\mu<\frac{1}{c\log(c)-c+1},
\end{equation}
so that the limit log-Poisson measure exists for any such $c$ as $c\log(c)-c+1>0$
for $c>0,$ $c\neq 1.$ The moments are finite for $q>1$ if the multiscaling spectrum satisfies
\begin{equation}\label{momentexistP}
q-\mu\phi(-iq) = q-\mu\bigl(c^q-1-q(c-1)\bigr)>1, %%\Longrightarrow {\bf E}\Bigl[M_\mu(0,1)^q\Bigr]<\infty,
\end{equation}
cf. Eqs.  \eqref{mspectrum} and \eqref{finmom}.
In particular, the moments become eventually infinite if
$c>1$ as they do in the limit lognormal case. On the contrary, if $0<c<1,$
all moments for $q>1$ are finite for sufficiently small $\mu.$
The process $\omega_{\mu, \varepsilon}(u)$ in this case was first constructed in \cite{Pulses}
and the type of spectrum in Eq. \eqref{momentexistP} appeared in \cite{She}.
%%GIVE law of W HERE

%%(say that ID fields in these cases have natural interptert, (Gaussian free and Barral-M), state multiscaling spectfrum,
%%and formulas for moments. applc to maximum of field). 
 
%%that the limit measure   

%%and the multiscaling law in ... gets replaced with
%%\begin{equation}
%%{\bf E}\Bigl[\Bigl(\int_0^t M_\mu(ds)\Bigr)^n\Bigr] \thicksim const\, t^{n-\mu\phi(-in)}, \; t\rightarrow 0.
%%\end{equation}

%%\section{Total mass problem and summary of main results}

%%For simplicity, the reader 
%%can assume with little loss of generality that the intensity measure is the measure of Bacry-Muzy  in the rest of the paper. %%

%%In particular, we show that the intermittency invariance
%%of the underlying infinitely divisible (ID) field, which we first established in \cite{Me5} in the canonical Bacry-Muzy case,
%%extends to the general framework of this paper. 

\section{Intermittency invariance and differentiation rule}
\noindent
In this section we will formulate the intermittency invariance of the underlying infinitely divisible (ID) field and state
our main results on the rule of intermittency differentiation and its application to the distribution of the total mass
at the lowest non-trivial order in intermittency. The proofs are deferred to Section 4. The reader
can assume with little loss of generality that the intensity measure is the measure of Bacry-Muzy given in Eq. \eqref{rho}.
All of our results work with the general intensity measure given in Eq. \eqref{rhogeneral}.  

Fix $L\geq 1$ and define the ID random variable by the formula
\begin{equation}
{\bf E}[e^{iq Z_L}] = e^{\mu \phi(q) \log L}.
\end{equation}
Define the corresponding ID field by
\begin{equation}\label{omegaLprocess}
\omega_{\mu,L,\varepsilon}(u) = \omega_{\mu,\varepsilon}(u) + Z_L,
\end{equation}
where $Z_L$ is independent of the process $\omega_{\mu,\varepsilon}(u).$ Clearly, $\omega_{\mu,L=1,\varepsilon}(u)$ coincides
with the original field. %%as defined in Eq.  \eqref{omegaprocess}. 
Finally, let
$\delta\rightarrow X(\delta)$ be a L\'evy process (a stochastic process with
stationary, independent increments)
that is independent of the $u\rightarrow \omega_{\mu, L,
\varepsilon}(u)$ process and defined in terms of the ID distribution
associated with $\phi(q)$ as follows
\begin{equation}\label{Xdelta}
{\bf E}\left[e^{i q X(\delta)}\right] = e^{\delta\phi(q)}, \,\,
X(0)=0.
\end{equation}
The existence and uniqueness of $X(\delta)$ follow from the general
theory of L\'evy processes, confer \cite{Bertoin}. Then, we have the following
result.
\begin{thm}[Intermittency invariance]\label{IntInv}
Fix $\mu,$ $L,$ $\varepsilon,$ and $\delta<\mu,$ and let
$\bar{\omega}_{\delta, e L, \varepsilon}(u)$ denote an independent
copy of the $\omega_{\mu,L,\varepsilon}(u)$ process with the intermittency $\delta$ and $L$ replaced with $eL,$ where $e$ is the base of the
natural logarithm. Then, there holds the
following equality in law
of stochastic processes in $u$ on the interval $u\in[0, 1],$
\begin{gather}\label{theinvariance}
X(\delta)+\omega_{\mu, L, \varepsilon}(u) = \omega_{\mu-\delta, L,
\varepsilon}(u) + \bar{\omega}_{\delta, e L, \varepsilon}(u).
\end{gather}
\end{thm}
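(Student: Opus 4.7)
The strategy is to reduce the claimed equality in law of stochastic processes to matching finite-dimensional joint characteristic functions at arbitrary sample points $u_1,\ldots,u_n\in[0,1]$ with arbitrary real coefficients $q_1,\ldots,q_n$. Since every process in sight is infinitely divisible, this uniquely pins down the law of each side as a process in $u$.

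The key structural fact I would extract from Lemma \ref{main} in the Appendix is that the joint log-characteristic function of $\omega_{\mu,\varepsilon}(\cdot)$ is \emph{linear in the intermittency} $\mu$. Explicitly, I would write
\[
\log {\bf E}\bigl[e^{i\sum_{j}q_j\omega_{\mu,\varepsilon}(u_j)}\bigr] \;=\; \mu\,\Psi(q_1,\ldots,q_n;u_1,\ldots,u_n;\varepsilon),
\]
with $\Psi$ independent of $\mu$ (the two-point special case in Eq. (2.4) already exhibits this shape). Because $Z_L$ is independent of the field and is added uniformly at every $u$, one gets $\sum_j q_j Z_L = (\sum_j q_j)Z_L$, so by independence
\[
\log {\bf E}\bigl[e^{i\sum_{j}q_j\omega_{\mu,L,\varepsilon}(u_j)}\bigr] \;=\; \mu\Psi \;+\; \mu(\log L)\,\phi\bigl({\textstyle\sum_j q_j}\bigr).
\]
The decisive observation is that $L$ enters \emph{only} through the single scalar coefficient of $\phi(\sum_j q_j)$ — nowhere else in the joint log-CF.

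Combining these ingredients, the LHS of Eq. \eqref{theinvariance} has joint log-CF
\[
\mu\Psi \;+\; \mu(\log L)\,\phi\bigl({\textstyle\sum_j q_j}\bigr) \;+\; \delta\,\phi\bigl({\textstyle\sum_j q_j}\bigr),
\]
the last term coming from the independent L\'evy contribution $X(\delta)$ via Eq. \eqref{Xdelta}. On the RHS, the hypothesis $\delta<\mu$ makes $\omega_{\mu-\delta,L,\varepsilon}$ well-defined, and independence of $\omega_{\mu-\delta,L,\varepsilon}$ and $\bar\omega_{\delta,eL,\varepsilon}$ plus the formula above produce
\[
(\mu-\delta)\Psi + (\mu-\delta)(\log L)\,\phi\bigl({\textstyle\sum_j q_j}\bigr) \;+\; \delta\Psi + \delta(\log L + 1)\,\phi\bigl({\textstyle\sum_j q_j}\bigr),
\]
using $\log(eL)=\log L+1$. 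Collecting terms, this equals the LHS expression, and the two joint characteristic functions coincide at every $n$ and every choice of points and test coefficients.

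The hard part here is not the algebra but the structural verification of the two properties invoked above: (i) the strict $\mu$-linearity of $\Psi$ and (ii) the fact that the $L$-dependence sits entirely in $\mu(\log L)\phi(\sum_j q_j)$. Both must be read off carefully from Lemma \ref{main}, where they reflect the underlying cone construction of $\omega_{\mu,\varepsilon}$ as a stochastic integral against an ID noise whose L\'evy characteristics scale linearly in $\mu$. Once these are in hand, the whole proof reduces to the arithmetic identity $\delta\bigl(\log(eL)-\log L\bigr)=\delta$, which is precisely the role the L\'evy process $X(\delta)$ is designed to play: it supplies the one extra $\phi(\sum_j q_j)$ term needed to upgrade $L$ to $eL$ in the $\delta$-copy on the right-hand side.
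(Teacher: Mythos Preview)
Your proposal is correct and follows essentially the same route as the paper: both compute the joint characteristic functions of the two sides using Lemma~\ref{main} and the definition of $Z_L$, then match them via the arithmetic identity $\mu\log L + \delta = (\mu-\delta)\log L + \delta\log(eL)$. The paper packages this a bit differently by introducing $\rho_{L,\varepsilon}(u)=\log L+\rho_\varepsilon(u)$ and recording the identity $\delta+\mu\rho_{L,\varepsilon}=(\mu-\delta)\rho_{L,\varepsilon}+\delta\rho_{eL,\varepsilon}$ together with Eq.~\eqref{sumID}, but the content is the same as your separation into the $\mu$-linear piece $\Psi$ and the scalar $\mu(\log L)\phi(\sum_j q_j)$.
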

In the gaussian case this result is originally due to  \cite{Me2}, \cite{Me3} %%for $r(t)=|t|$ 
and to \cite{MeIntRen} for the general intensity measure. %%general $r(t).$
In the ID case this result is due to  \cite{Me5} for the Bacry-Muzy measure %%$r(t)=|t|.$ 
and its  extension to the general case is new. 

The significance of the intermittency invariance is that it provides a technical devise that 
replaces the non-existent Markov property of the underlying ID field and allows one to
derive a Feynman-Kac equation for the distribution of the total mass by 
considering a stochastic flow in intermittency as opposed to time (as in the classical framework
of diffusions). 

Define the finite regularization scale total mass to be
\begin{equation}\label{Meps}
M_{\mu, \varepsilon}\triangleq \int_0^1 e^{ \omega_{\mu,\varepsilon}(s)} \, ds
\end{equation}
so that the total mass is the limit 
\begin{equation}
M_{\mu}= \lim\limits_{\varepsilon\rightarrow
0} M_{\mu, \varepsilon}.
\end{equation}
Our results are most naturally expressed in terms of a particular \emph{non-local} functional of the total mass of the form
\begin{equation}\label{generfunctional}
v(\mu, F, t_1\cdots t_n)\triangleq \lim\limits_{\varepsilon\rightarrow 0} {\bf E}\Bigl[F\bigl(M_{\mu, \varepsilon}\bigr) 
e^{ \omega_{\mu,\varepsilon}(t_1)+\cdots+\omega_{\mu,\varepsilon}(t_n)}\Bigr],
\end{equation}
involving the entire path of $dM_\mu$ as opposed to the value of the total mass of the whole interval. As we will see below, 
the functionals in Eq. \eqref{generfunctional} are in fact invariant under intermittency differentiation. They
are well-defined for sufficiently small intermittency due to the normalization condition $\phi(-i)=0,$ 
cf. Eq. \eqref{phi}, which implies
\begin{equation}
{\bf E}\bigl[e^{\omega_{\mu,\varepsilon}(s)}\bigr]=1
\end{equation}
so that the functional in Eq. \eqref{generfunctional} can be naturally interpreted as a change of probability measure.
Define also the key quantity
\begin{equation}\label{gfunction}
g(s_1, \,s_2) \triangleq \lim\limits_{\varepsilon\rightarrow 0} \rho_\varepsilon(s_1-s_2),
%%-\log r(s_1-s_2),
\end{equation}
which is the limit of the intensity measure,  %% of intersection of the conical sets, 
cf. Eqs. \eqref{rho} for the Bacry-Muzy and \eqref{rhogeneral} for the general 
case. In the Bacry-Muzy case, one has
\begin{equation}
g(s_1, \,s_2) = -\log |s_1-s_2|.
\end{equation}
Then, given a smooth test function $F(x),$ our main results are as follows. 
\begin{thm}[Rule of intermittency differentiation]\label{theoremIDdiff}
\begin{gather} \frac{\partial}{\partial \mu} {\bf E}\bigl[F(M_{\mu})\bigr]  
 = \sigma^2 \!\!\!\!\int\limits_{\{s_1<s_2\}} v(\mu, F^{(2)}\!, \,s_1, s_2)\,
g(s_1,\,s_2)\,ds^{(2)}+ \nonumber \\ + \sum\limits_{k=2}^\infty \,\int\limits_{\mathbb{R}\setminus \{0\}} 
(e^u-1)^k\,d\mathcal{M}(u) \!\!\!\!\!\int\limits_{\{s_1<\cdots<s_k\}} \!\!\!\!\!\!\!
v(\mu, F^{(k)}\!, s_1,\cdots s_k) \,g(s_1,\,s_k)\,ds^{(k)}.\label{theIDrule}
\end{gather}
\end{thm}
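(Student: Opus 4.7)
The plan is to apply the intermittency invariance of Theorem \ref{IntInv} with $L=1$ and an infinitesimal increment $\delta\in(0,\mu)$, yielding the in-law identity of processes
\begin{equation*}
\omega_{\mu,\varepsilon}(u) \overset{\rm d}{=} \omega_{\mu-\delta,\varepsilon}(u) + \bar{\omega}_{\delta,e,\varepsilon}(u) - X(\delta),
\end{equation*}
with the three summands on the right mutually independent. Exponentiating and integrating over $[0,1]$ produces the representation $M_{\mu,\varepsilon} \overset{\rm d}{=} e^{-X(\delta)}\bigl(M_{\mu-\delta,\varepsilon} + R_\delta\bigr)$, where $R_\delta \triangleq \int_0^1 e^{\omega_{\mu-\delta,\varepsilon}(u)}\bigl(e^{\bar{\omega}_{\delta,e,\varepsilon}(u)} - 1\bigr)\,du$. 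The intermittency derivative $\partial_\mu{\bf E}[F(M_\mu)]$ is then extracted as the coefficient of $\delta$ in ${\bf E}[F(M_{\mu,\varepsilon})] - {\bf E}[F(M_{\mu-\delta,\varepsilon})]$ after passing to $\varepsilon\to 0$.

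Next I would condition on $\omega_{\mu-\delta,\varepsilon}$ and Taylor expand $F$ around $M_{\mu-\delta,\varepsilon}$. Using independence, the resulting expression is a sum $\sum_k (k!)^{-1}{\bf E}[F^{(k)}(M_{\mu-\delta,\varepsilon})\,B_k(\delta)]$, where $B_k(\delta)$ is the conditional $k$-th moment of $e^{-X(\delta)}(M_{\mu-\delta,\varepsilon}+R_\delta)-M_{\mu-\delta,\varepsilon}$ with respect to $X(\delta)$ and $\bar\omega_{\delta,e,\varepsilon}$. Expanding this $k$-th power binomially and invoking ${\bf E}[e^{-mX(\delta)}] = e^{\delta\phi(im)} = 1 + O(\delta)$, together with the key combinatorial Lemma \ref{BLemma} applied to $\bar\omega_{\delta,e,\varepsilon}$, shows that the $O(\delta)$ contribution reduces to the pure $R_\delta^k$ term for $k\geq 2$. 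Lemma \ref{BLemma} supplies precisely the factor $\delta(-\log|s_1-s_k|)\bigl[\sigma^2\mathbf{1}_{k=2} + \int(e^u-1)^k\,d\mathcal{M}(u)\bigr]$ for the joint mixed moment ${\bf E}\bigl[\prod_\ell(e^{\bar\omega_{\delta,e,\varepsilon}(s_\ell)}-1)\bigr]$, producing the $g(s_1,s_k)$ factor and the Lévy weights in the statement; the single-point ($k=1$) piece must cancel against the $O(\delta)$ drift from $e^{-X(\delta)}$ under the normalization $\phi(-i)=0$.

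Finally, symmetrising each $k$-fold spatial integral over $[0,1]^k$ into the ordered simplex $\{s_1<\cdots<s_k\}$ absorbs the Taylor factor $1/k!$, and the surviving limit $\lim_{\varepsilon\to 0}{\bf E}\bigl[F^{(k)}(M_{\mu-\delta,\varepsilon})\prod_\ell e^{\omega_{\mu-\delta,\varepsilon}(s_\ell)}\bigr]$ is recognised as $v(\mu-\delta,F^{(k)},s_1,\dots,s_k)$ from \eqref{generfunctional}. Sending $\delta\to 0$ then assembles \eqref{theIDrule}. The main obstacle is analytical rather than structural: the derivation requires interchanging an infinite Taylor expansion of $F$, the infinite sum over $k$, termwise extraction of the $\delta$-coefficient, and the limit $\varepsilon\to 0$, none of which are justified for a general smooth $F$; as the paper states, the argument proceeds as an equality of formal power series. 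A secondary delicate point is the cancellation of the drift from $X(\delta)$ against the $k=1$ case of Lemma \ref{BLemma}, which relies on the compensated form of the Lévy-Khinchine exponent $\phi(q)$ and the mean-one normalization $ {\bf E}[e^{\bar\omega_{\delta,e,\varepsilon}(u)}]=1$, explaining why the final sum begins cleanly at $k=2$.
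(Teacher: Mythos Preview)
Your starting identity is incorrect. Theorem~\ref{IntInv} asserts the equality in law of the \emph{sum} process $X(\delta)+\omega_{\mu,\varepsilon}(\cdot)$ with $\omega_{\mu-\delta,\varepsilon}(\cdot)+\bar\omega_{\delta,e,\varepsilon}(\cdot)$; it does not provide a coupling in which $X(\delta)$, $\omega_{\mu-\delta,\varepsilon}$ and $\bar\omega_{\delta,e,\varepsilon}$ are jointly defined and mutually independent, so you cannot ``subtract $X(\delta)$'' to obtain $\omega_{\mu,\varepsilon}\overset{\rm d}{=}\omega_{\mu-\delta,\varepsilon}+\bar\omega_{\delta,e,\varepsilon}-X(\delta)$. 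A one--point characteristic function check shows your identity would force $\phi(q)+\phi(-q)=0$, which already fails in the Gaussian case. Equivalently, test $F(x)=x$: the theorem gives $\partial_\mu{\bf E}[M_\mu]=0$, whereas your expansion leaves the uncancelled $k=1$ term ${\bf E}[M_{\mu-\delta,\varepsilon}]\,\partial_\delta{\bf E}[e^{-X(\delta)}-1]\big|_{0}=\phi(i)\neq 0$. The cancellation you describe at $k=1$ therefore does not occur; relatedly, Lemma~\ref{BLemma} applied to $\bar\omega_{\delta,e,\varepsilon}$ produces $\delta\,\rho_{e,\varepsilon}(s_k-s_1)=\delta\bigl(1+\rho_\varepsilon(s_k-s_1)\bigr)$, not $\delta\,g(s_1,s_k)$, and the extra ``$1$'' generates further uncancelled terms at every $k\geq 2$.

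The paper's device is to avoid inverting $X(\delta)$ altogether. One evaluates $A\triangleq \partial_\delta\big|_{\delta=0}{\bf E}\bigl[F(ze^{X(\delta)}M_{\mu,\varepsilon})\bigr]$ in two ways. On one side, the backward Kolmogorov equation for the L\'evy process $X(\delta)$ (Lemma~\ref{KolmogorovID}) gives the generator acting on $v_\varepsilon(z,\mu,F)$. On the other side, Theorem~\ref{IntInv} replaces $e^{X(\delta)}M_{\mu,\varepsilon}$ by $\int e^{\omega_{\mu-\delta,\varepsilon}+\bar\omega_{\delta,e,\varepsilon}}$, and one Taylor expands around $M_{\mu,\varepsilon}$ (not $M_{\mu-\delta,\varepsilon}$) in the small quantity $\mathcal{C}=\int e^{\omega_{\mu,\varepsilon}}(e^{\mathcal{A}_\varepsilon+\bar{\mathcal A}_\varepsilon}-1)$ with $\mathcal{A}_\varepsilon=\omega_{\mu-\delta,\varepsilon}-\omega_{\mu,\varepsilon}$. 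Lemma~\ref{BLemma} then yields two pieces: the product $\prod(e^{\mathcal{A}_\varepsilon(s_l)}-1)$, whose Taylor resummation is identified via Lemma~\ref{lemmadiffID} with $-\partial_\mu v_\varepsilon$; and $\delta\,\rho_{e,\varepsilon}(s_k-s_1)\,c_k$. The ``$1$'' in $\rho_{e,\varepsilon}=1+\rho_\varepsilon$ reproduces exactly the Kolmogorov expression for $A$, and only after this two--sided cancellation does the surviving $\rho_\varepsilon$ part give \eqref{theIDrule}. Your expansion around $M_{\mu-\delta,\varepsilon}$ loses the $\mathcal{A}_\varepsilon$ mechanism and hence Lemma~\ref{lemmadiffID}, and without the Kolmogorov side there is nothing to absorb the ``$1$'' terms.
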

In the special case of the GMC this result appeared first in \cite{Me2} and \cite{Me3}
%%for $r(t)=|t|,$ 
in the Bacry-Muzy case and in \cite{MeIntRen} in general. %%$r(t).$ 
In the ID case this result is new in all cases. A derivation of Theorem \ref{theoremIDdiff} from Theorem \ref{IntInv} is given in Section 4. It suffices to explain here that the main idea is to consider a stochastic flow in intermittency
and evaluate the limit
\begin{equation}\label{thedeltalimit}
\frac{\partial}{\partial\delta}\Big\vert_{\delta=0}\,\,
{\bf{E}} \Bigl[ F\bigl(ze^{X(\delta)} \,M_{\mu, \varepsilon}\bigr)\Bigr],
\end{equation}
where $X(\delta)$ is defined in Eq. \eqref{Xdelta} and is independent of $\omega_{\mu,\varepsilon}(s),$  in two different ways: 
by the backward Kolmogorov equation for the L\'evy process $X(\delta)$ and by applying Theorem \ref{IntInv} and expanding to the first order in $\delta.$

Upon substituting $\mu=0$ into Eq. \eqref{theIDrule}
we obtain an explicit formula for the first order term in the expansion of the distribution of the total mass in intermittency.
\begin{cor}[Distribution to the first order in intermittency]
The distribution of the total mass to the first order in intermittency is determined by
\begin{align} \frac{\partial}{\partial \mu}\Big\vert_{\mu=0} {\bf E}\bigl[F(M_{\mu})\bigr]  
& = \sigma^2 F^{(2)}(1)\int\limits_{\{s_1<s_2\}} 
g(s_1,\,s_2)\,ds^{(2)}+ \nonumber \\ &+ \sum\limits_{k=2}^\infty F^{(k)}(1)\,\int\limits_{\mathbb{R}\setminus \{0\}} 
(e^u-1)^k\,d\mathcal{M}(u) \!\!\!\!\!\!\!\int\limits_{\{s_1<\cdots<s_k\}} \!\!\!
g(s_1,\,s_k)\,ds^{(k)}.\label{theIDruleFirst}
\end{align}
\end{cor}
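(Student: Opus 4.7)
The plan is to derive Eq. \eqref{theIDruleFirst} by a direct substitution of $\mu=0$ into the differentiation rule \eqref{theIDrule} of Theorem \ref{theoremIDdiff}, once the non-local functionals $v(\mu, F^{(k)}, s_1,\ldots,s_k)$ have been evaluated at vanishing intermittency. The only real content of the proof lies in showing that
\[
v(0, F^{(k)}, s_1,\ldots,s_k) = F^{(k)}(1)
\]
for every $k\geq 2$ and every choice of distinct points $s_1<\cdots<s_k$ in the unit interval.

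First, I would observe that the characteristic function of $\omega_{\mu,\varepsilon}(u)$, namely $\exp\bigl(-\mu\phi(q)\log\varepsilon\bigr)$, reduces to $1$ at $\mu=0$ for every $q\in\mathbb{R}$ and every $\varepsilon>0$; the analogous joint characteristic function supplied by Lemma \ref{main} is equal to $1$ at $\mu=0$ as well. Hence $\omega_{0,\varepsilon}(u)\equiv 0$ almost surely as a stochastic process, the regularized total mass is deterministically $M_{0,\varepsilon}=\int_0^1 e^0\,ds=1$, and the exponential weight $e^{\omega_{0,\varepsilon}(t_1)+\cdots+\omega_{0,\varepsilon}(t_k)}$ equals $1$ almost surely. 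Passing to $\varepsilon\to 0$ in the definition \eqref{generfunctional} of $v$ then yields $v(0, F^{(k)}, s_1,\ldots,s_k)=F^{(k)}(1)$, a constant independent of the spatial arguments $s_j$.

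With this identification at hand, setting $\mu=0$ on the right-hand side of Eq. \eqref{theIDrule} allows one to pull the constant $F^{(k)}(1)$ out of each spatial integral over the simplex $\{s_1<\cdots<s_k\}$, producing exactly the right-hand side of Eq. \eqref{theIDruleFirst}. The only technical subtlety, mild in this setting, is continuity of the map $\mu\mapsto\partial_\mu{\bf E}[F(M_\mu)]$ at $\mu=0$, together with termwise evaluation of the series over $k$; both are legitimate under the hypotheses that already validate \eqref{theIDrule}, since the non-degeneracy bound \eqref{nondeg} and smoothness with suitable growth of $F$ supply uniform control of the summands in a neighborhood of $\mu=0$. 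The main conceptual point is thus not an analytic obstacle but the clean collapse of the $v$-functionals to point evaluations of the derivatives of $F$ at $1$, which is a direct consequence of the normalization $\phi(-i)=0$ and the vanishing of the field at $\mu=0$.
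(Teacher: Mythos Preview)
Your proposal is correct and matches the paper's approach exactly: the corollary is obtained simply by substituting $\mu=0$ into Eq.~\eqref{theIDrule}, and your justification that $v(0,F^{(k)},s_1,\ldots,s_k)=F^{(k)}(1)$ because the field $\omega_{0,\varepsilon}$ vanishes identically is precisely the (implicit) reasoning behind the paper's one-line remark preceding the corollary.
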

\begin{rem}
It is easy to show that 
\begin{equation}
\int\limits_{\{s_1<\cdots<s_k\}} 
g(s_1,\,s_k)\,ds^{(k)} = O\bigl(1/k!\bigr)
\end{equation}
in the limit $k\rightarrow \infty$ so that the sum in Eq. \eqref{theIDruleFirst} is finite provided
\begin{equation}
\int\limits_{\mathbb{R}\setminus \{0\}}  F(e^u) \,d\mathcal{M}(u)<\infty.
\end{equation}
\end{rem}

The technique of intermittency differentiation is not limited to the distribution of the total mass
of the limit measure but applies also to the joint distribution of the measure of subintervals, \emph{i.e.}
the dependence structure of the measure. We will illustrate this application with the case of two
disjoint subintervals $I_1, I_2\subset[0,1],$ $I_1\cap I_2=\emptyset,$  $\sup I_1<\inf I_2$
for simplicity, although it applies to any finite number of such subintervals.
Denote
\begin{equation}\label{Mepsinterval}
M_{\mu, \varepsilon}(I) \triangleq \int_{I} e^{ \omega_{\mu,\varepsilon}(s)} \, ds
\end{equation}
and the limit measure of the interval
\begin{equation}
M_{\mu}(I)= \lim\limits_{\varepsilon\rightarrow
0} M_{\mu, \varepsilon}(I).
\end{equation}
To simplify notations, it is also convenient to introduce the functional
\begin{equation}\label{generfunctionaltwo}
v(\mu, F_1, F_2, t_1\cdots t_n)\triangleq \lim\limits_{\varepsilon\rightarrow 0} {\bf E}\Bigl[F_1\bigl(M_{\mu, \varepsilon}(I_1)\bigr) 
F_2\bigl(M_{\mu, \varepsilon}(I_2)\bigr) 
e^{ \omega_{\mu,\varepsilon}(t_1)+\cdots+\omega_{\mu,\varepsilon}(t_n)}\Bigr],
\end{equation}
extending Eq. \eqref{generfunctional} to two subintervals, which are assumed to be fixed and dropped from the list of arguments. 

\begin{thm}[Intermittency differentiation for two intervals]\label{theoremIDdiffTwo}
\begin{gather} \frac{\partial}{\partial \mu} {\bf E}\Bigl[F_1\bigl(M_{\mu}(I_1)\bigr) F_2\bigl(M_{\mu}(I_2)\bigr)\Bigr]  
 = \sigma^2\Biggl[\, \int\limits_{\{s_1<s_2\}\subset I_1}  v(\mu, F_1^{(2)}\!, F_2, \,s_1, s_2)   \times \nonumber \\
 \times 
g(s_1,\,s_2)\,ds^{(2)} +  \int\limits_{\{s_1<s_2\}\subset I_2} v(\mu, F_1,  F_2^{(2)}\!, \,s_1, s_2)\,
g(s_1,\,s_2)\,ds^{(2)} + \nonumber \\ 
+ \int\limits_{\{s_1\in I_1,\, s_2\in I_2\}} v(\mu, F_1^{(1)}\!,  F_2^{(1)}\!, \,  s_1, s_2) \,
g(s_1,\,s_2)\,ds^{(2)}\Biggr] +
 \nonumber \\ + 
\sum\limits_{\substack{k,l\geq 0 \\ k+l\geq 2}}^\infty \;\int\limits_{\mathbb{R}\setminus \{0\}} 
(e^u-1)^{k+l}\,d\mathcal{M}(u) \!\!\!\!\! \int\limits_{\substack{\{s_1<\cdots<s_k\}\subset I_1 \\ \{s_{k+1}<\cdots<s_{k+l}\}\subset I_2 }} \!\!\!\!\!\!\!\!\!\!\!\!
v(\mu, F_1^{(k)}\!, F_2^{(l)}\!, \, s_1, \cdots, s_{k+l}) \times \nonumber \\ \times
g(s_1,\,s_{k+l})\,ds^{(k+l)}.\label{theIDruleInterval}
\end{gather}
\end{thm}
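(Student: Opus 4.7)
The plan is to mimic the derivation of Theorem \ref{theoremIDdiff} outlined after its statement, extending the bookkeeping to two disjoint intervals. Consider the quantity
\begin{equation*}
U_\varepsilon(\delta) \triangleq {\bf E}\Bigl[F_1\bigl(e^{X(\delta)} M_{\mu,\varepsilon}(I_1)\bigr)\,F_2\bigl(e^{X(\delta)} M_{\mu,\varepsilon}(I_2)\bigr)\Bigr],
\end{equation*}
with $X(\delta)$ the L\'evy process of Eq. \eqref{Xdelta}, independent of the ID field. Theorem \ref{IntInv} identifies $U_\varepsilon(\delta)$ with a second expression $V_\varepsilon(\delta)$ obtained by replacing $X(\delta)+\omega_{\mu,\varepsilon}(u)$ in law by $\omega_{\mu-\delta,\varepsilon}(u)+\bar\omega_{\delta,e,\varepsilon}(u)$. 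I will compute $U_\varepsilon'(0) = V_\varepsilon'(0)$ in two distinct ways, equate, and let $\varepsilon\to 0$.

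For $U_\varepsilon'(0)$, condition on the field (freezing $m_j = M_{\mu,\varepsilon}(I_j)$) and apply the backward Kolmogorov equation for $X(\delta)$ to the deterministic map $y\mapsto F_1(e^y m_1)F_2(e^y m_2)$ at $y=0$. Via the L\'evy-Khinchine representation Eq. \eqref{phi}, the normalization $\phi(-i)=0$ cancels the single-derivative terms against the drift $-\sigma^2/2$. Leibniz-expansion of $\partial_y^2[F_1(e^y m_1)F_2(e^y m_2)]_{y=0}$ yields the three Gaussian splittings $(2,0)$, $(1,1)$, $(0,2)$ with prefactors $m_1^2, 2m_1 m_2, m_2^2$, while a joint Taylor expansion of $F_1(e^u m_1)F_2(e^u m_2)$ in $m_1, m_2$ yields the $(k,l)$-indexed terms with coefficient $(e^u-1)^{k+l}m_1^k m_2^l/(k!\,l!)$. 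Using $m_1^k m_2^l = k!\,l!\int\prod_j e^{\omega_{\mu,\varepsilon}(s_j)}\,ds^{(k+l)}$ over ordered points in $I_1^k\times I_2^l$ (possible because $\sup I_1<\inf I_2$), taking expectation, and sending $\varepsilon\to 0$ identifies each resulting term as an integral of $v(\mu,F_1^{(k)},F_2^{(l)},\cdot)$ over the appropriate domain, without any $g$-factor.

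For $V_\varepsilon'(0)$, separate the contributions from the constant offset $\bar Z_e^{(\delta)}$ implicit in $\bar\omega_{\delta,e,\varepsilon}=\bar\omega_{\delta,\varepsilon}+\bar Z_e^{(\delta)}$ (which, by the same L\'evy-Khinchine computation, reproduces $U_\varepsilon'(0)$ evaluated at $M_\mu(I_1),M_\mu(I_2)$) and from the $\delta$-dependence of $\tilde M_\delta(I_j) = \int_{I_j} e^{\omega_{\mu-\delta,\varepsilon}(u)+\bar\omega_{\delta,\varepsilon}(u)}\,du$ with the offset factored out. Writing $\tilde M_\delta(I_j) = M_{\mu-\delta,\varepsilon}(I_j)+\eta_\delta(I_j)$ with $\eta_\delta(I_j) = \int_{I_j} e^{\omega_{\mu-\delta,\varepsilon}(u)}(e^{\bar\omega_{\delta,\varepsilon}(u)}-1)\,du$, the $\mu\mapsto\mu-\delta$ shift yields $-\partial_\mu{\bf E}[F_1(M_\mu(I_1))F_2(M_\mu(I_2))]$; the $\bar\omega$-correction, obtained via double Taylor expansion, cancellation of linear-in-$\eta_\delta$ terms (since ${\bf E}[e^{\bar\omega_{\delta,\varepsilon}(u)}-1]=0$), symmetrization, and application of Lemma \ref{BLemma} to the automatically-ordered configuration $s_1<\cdots<s_{k+l}$ (with first $k$ in $I_1$, last $l$ in $I_2$), yields leading-order-in-$\delta$ factor $-\delta\log|s_{k+l}-s_1|\cdot c_{k+l}$ with $c_2 = d(1) = \sigma^2+\int(e^u-1)^2\,d\mathcal{M}(u)$ and $c_n = \int(e^u-1)^n\,d\mathcal{M}(u)$ for $n\geq 3$. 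Equating $U_\varepsilon'(0) = V_\varepsilon'(0)$ and cancelling the common L\'evy-Khinchine contribution produces Eq. \eqref{theIDruleInterval}, upon splitting $d(1)=\sigma^2+\int(e^u-1)^2\,d\mathcal{M}$ so that the Gaussian part of the $k+l=2$ case yields the three explicit integrals weighted by $v(\mu,F_1^{(2)},F_2,\cdot)$, $v(\mu,F_1^{(1)},F_2^{(1)},\cdot)$, $v(\mu,F_1,F_2^{(2)},\cdot)$, while the L\'evy part merges into the unified $(k+l\geq 2)$-sum. The main obstacle is this combinatorial recombination of the $k+l=2$ case, together with the rigorous justification of interchanging the $\varepsilon\to 0$ limit with the infinite summations and the $\delta$-derivative, handled at the formal power series level as in the single-interval case.
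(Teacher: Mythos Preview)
Your proposal is correct and follows essentially the same route as the paper, which merely states that one evaluates the two-interval analogue of the limit $A$ in Eq.~\eqref{thelim} in two ways and that ``the remaining details are essentially the same.'' Your reorganization---factoring out the offset $\bar Z_e^{(\delta)}$ first and expanding around $M_{\mu-\delta,\varepsilon}(I_j)$ rather than $M_{\mu,\varepsilon}(I_j)$---is a minor but legitimate variant that avoids invoking Lemma~\ref{lemmadiffID} and the $\rho_{e,\varepsilon}=1+\rho_\varepsilon$ cancellation explicitly, since with $\mathfrak f\equiv 0$ the first product in Lemma~\ref{BLemma} vanishes and the remaining $\rho_{1,\varepsilon}=\rho_\varepsilon$ produces the $g$-factor directly.
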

This result in the gaussian case is originally due to \cite{Me6}. In the infinitely divisible case it is new.
\begin{cor}[Joint distribution to the first order in intermittency]\label{TwoIntervalsZeroInter}
\begin{gather} \frac{\partial}{\partial \mu}\Big\vert_{\mu=0} {\bf E}\Bigl[F_1\bigl(M_{\mu}(I_1)\bigr) F_2\bigl(M_{\mu}(I_2)\bigr)\Bigr]   
 = \sigma^2\Biggl[ F_1^{(2)}\bigl(|I_1|\bigr)  F_2\bigl(|I_2|\bigr) \times \nonumber \\
 \times \int\limits_{\{s_1<s_2\}\subset I_1}  
g(s_1,\,s_2)\,ds^{(2)} +  F_1\bigl(|I_1|\bigr)  F_2^{(2)}\bigl(|I_2|\bigr)   \int\limits_{\{s_1<s_2\}\subset I_2} 
g(s_1,\,s_2)\,ds^{(2)}
+ \nonumber \\  + F_1^{(1)}\bigl(|I_1|\bigr) F_2^{(1)}\bigl(|I_2|\bigr)  \int\limits_{\{s_1\in I_1,\, s_2\in I_2\}}
g(s_1,\,s_2)\,ds^{(2)}\Biggr] +
 \nonumber \\ + 
\sum\limits_{\substack{k,l\geq 0 \\ k+l\geq 2}}^\infty F_1^{(k)}\bigl(|I_1|\bigr)  F_2^{(l)}\bigl(|I_2|\bigr)
 \int\limits_{\mathbb{R}\setminus \{0\}} 
(e^u-1)^{k+l}\,d\mathcal{M}(u) \!\!\!\!\!\!\!\!\!\!\!\! \!\!\int\limits_{\substack{\{s_1<\cdots<s_k\}\subset I_1 \\ \{s_{k+1}<\cdots<s_{k+l}\}\subset I_2 }} \!\!\!\!\!\!\!\!\!\!\!\!\!\!\!
g(s_1,\,s_{k+l})\,ds^{(k+l)}.\label{theIDruleIntervalZero}
\end{gather}
\end{cor}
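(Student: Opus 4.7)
The plan is to obtain Corollary \ref{TwoIntervalsZeroInter} as a direct specialization of Theorem \ref{theoremIDdiffTwo} at $\mu = 0$, by showing that at zero intermittency the non-local functional $v(\mu, F_1^{(k)}, F_2^{(l)}, s_1, \cdots, s_{k+l})$ degenerates to a deterministic product. First I would use the definition of the underlying field: since ${\bf E}[e^{iq\omega_{\mu,\varepsilon}(u)}] = e^{-\mu\phi(q)\log\varepsilon}$, setting $\mu = 0$ gives characteristic function identically $1$, so $\omega_{0,\varepsilon}(u) \equiv 0$ almost surely for every $\varepsilon > 0$. Consequently
\begin{equation*}
M_{0,\varepsilon}(I_j) = \int_{I_j} e^{0}\,ds = |I_j|, \qquad e^{\omega_{0,\varepsilon}(t_1)+\cdots+\omega_{0,\varepsilon}(t_n)} = 1,
\end{equation*}
and in particular the $\varepsilon \to 0$ limit in the definition \eqref{generfunctionaltwo} is trivial at $\mu = 0$. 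Hence for all choices of $k,l$ and all points $s_1,\ldots,s_{k+l}$ in $I_1\cup I_2$,
\begin{equation*}
v(0, F_1^{(k)}, F_2^{(l)}, s_1, \cdots, s_{k+l}) = F_1^{(k)}(|I_1|)\, F_2^{(l)}(|I_2|).
\end{equation*}

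Next I would substitute $\mu = 0$ into the right-hand side of Eq.~\eqref{theIDruleInterval}. Since the $v$-factors become constants, they pull out of each integral over $s$-variables; what remains are exactly the purely geometric integrals of $g(s_1,s_2)$ or $g(s_1,s_{k+l})$ displayed on the right-hand side of Eq.~\eqref{theIDruleIntervalZero}. Matching terms one-to-one (the three Gaussian pieces corresponding to the cases $\{s_1,s_2\}\subset I_1$, $\{s_1,s_2\}\subset I_2$, and $s_1\in I_1, s_2\in I_2$, and the sum over $k+l\geq 2$ split according to how many ordered points lie in each subinterval) reproduces the claimed formula.

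The only nontrivial issue is justifying that one may evaluate $\partial_\mu {\bf E}[F_1(M_\mu(I_1)) F_2(M_\mu(I_2))]$ at $\mu = 0$ by substituting $\mu = 0$ into the right-hand side of Eq.~\eqref{theIDruleInterval}, i.e.\ continuity of that right-hand side at $\mu = 0$. I would handle this by two dominated-convergence arguments: continuity of $\mu\mapsto v(\mu,\cdot,\cdot,\cdot)$ at $\mu = 0$ follows from the $L^1$ convergence $M_{\mu,\varepsilon}\to |I_j|$ as $\mu\to 0$ (which is uniform in $\varepsilon$ using ${\bf E}[M_{\mu,\varepsilon}(I_j)] = |I_j|$ and the Kahane-type bounds used in Theorem~\ref{theoremIDdiffTwo}), together with smoothness of $F_1,F_2$; and termwise convergence of the series in $k+l$ is controlled uniformly near $\mu=0$ by the finiteness criterion recorded in the Remark following the single-interval corollary, since the extra exponential factor $e^{\omega_{\mu,\varepsilon}}$ contributes a factor that is uniformly integrable in a neighborhood of $\mu = 0$ (an open subset of the subcritical regime).

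The main obstacle is thus not any new combinatorics or analytic estimate, but rather this continuity-in-$\mu$ step; once it is in place the corollary is a one-line substitution. I expect the smooth-$F$ hypothesis implicit in Theorem~\ref{theoremIDdiffTwo} to be sufficient for all dominated-convergence arguments needed, so no new assumption beyond those used in that theorem has to be added.
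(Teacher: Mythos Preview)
Your proposal is correct and follows exactly the paper's approach: the corollary is obtained by substituting $\mu=0$ into Theorem~\ref{theoremIDdiffTwo}, using that $\omega_{0,\varepsilon}\equiv 0$ forces $v(0,F_1^{(k)},F_2^{(l)},s_1,\ldots,s_{k+l})=F_1^{(k)}(|I_1|)\,F_2^{(l)}(|I_2|)$, just as the single-interval corollary was obtained by substituting $\mu=0$ into Eq.~\eqref{theIDrule}. Your additional discussion of continuity in $\mu$ via dominated convergence goes beyond what the paper does---the paper explicitly works at the level of formal power series and ``shun[s] all questions of convergence''---so you are being more careful than necessary for the stated level of rigor, but nothing you wrote is wrong.
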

This result has an application to the problem of computing the covariance structure of the total mass distribution.
\begin{cor}[Covariance structure]\label{covID}
Let $0<t<1.$ Then, in the limit $\tau\rightarrow 0,$ 
\begin{equation}\label{covstrucID}
{\bf Cov}\Bigl(\log M_\mu(t, t+\tau), \,\log  M_\mu(0, \tau)\Bigr) = \mu g(0,t)\Bigl(\sigma^2 + \int\limits_{\mathbb{R}\setminus \{0\}} u^2 
\,d\mathcal{M}(u)\Bigr) + O(\tau).
\end{equation}
\end{cor}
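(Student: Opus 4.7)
The plan is to combine Theorem \ref{theoremIDdiffTwo} with Theorem \ref{theoremIDdiff} applied to $F_1(x)=F_2(x)=\log x$, $I_1=(0,\tau)$, $I_2=(t,t+\tau)$, and to isolate the leading $\tau$-behavior of the intermittency derivative of the covariance. Since $M_0(I)=|I|$ is deterministic, the covariance vanishes at $\mu=0$, so it suffices to show
\begin{equation*}
\partial_\mu\,{\bf Cov}\bigl(\log M_\mu(0,\tau),\log M_\mu(t,t+\tau)\bigr) = g(0,t)\Bigl(\sigma^2+\int_{\mathbb{R}\setminus\{0\}}u^2\,d\mathcal{M}(u)\Bigr)+O(\tau)
\end{equation*}
uniformly in $\mu$ on compact subsets of the subcritical regime, and then to integrate from $0$ to $\mu$. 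I would expand
\begin{equation*}
\partial_\mu{\bf Cov} = \partial_\mu{\bf E}[\log M_\mu(I_1)\log M_\mu(I_2)] - \sum_{i=1}^{2}\partial_\mu{\bf E}[\log M_\mu(I_i)]\cdot{\bf E}[\log M_\mu(I_{3-i})],
\end{equation*}
noting that $F^{(k)}(x)=(-1)^{k-1}(k-1)!/x^k$ for $F=\log$ and $k\geq 1$.

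The key mechanism is a cancellation between the \emph{pure-interval} contributions of Theorem \ref{theoremIDdiffTwo} (those with $l=0$ or $k=0$) and the two product terms above. For a pure-$I_1$ contribution with all $s_j\in I_1$, I would argue that $v(\mu,F_1^{(k)},F_2,s_1,\ldots,s_k)$ factors asymptotically as $v(\mu,F_1^{(k)},s_1,\ldots,s_k)\,{\bf E}[F_2(M_\mu(I_2))]$ up to an error that is $O(\tau)$ uniformly in $\mu$, because $M_\mu(I_1)$ and $M_\mu(I_2)$ decorrelate in the limit where both intervals shrink at fixed separation $t>0$. Once this factorization is established with the right uniform error control, the pure-$I_1$ contribution matches $(\partial_\mu{\bf E}[\log M_\mu(I_1)])\cdot{\bf E}[\log M_\mu(I_2)]$ up to $O(\tau)$ via Theorem \ref{theoremIDdiff}, and symmetrically for pure-$I_2$, killing the two product terms.

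What survives are the cross contributions ($k\geq 1$ and $l\geq 1$). In each such term $s_1\in I_1$ and $s_{k+l}\in I_2$, so $g(s_1,s_{k+l})=g(0,t)+O(\tau)$, and the sorted volume of $I_1^k\times I_2^l$ is $\tau^{k+l}/(k!\,l!)$. A scaling argument based on the self-similarity $M_\mu(0,\tau)=W_\tau M_\mu(0,1)$, combined with the change-of-measure interpretation of the exponential prefactor (well-defined thanks to $\phi(-i)=0$), should yield $v(\mu,F_1^{(k)},F_2^{(l)},s_1,\ldots,s_{k+l})=(-1)^{k+l}(k-1)!\,(l-1)!\,\tau^{-(k+l)}(1+O(\tau))$. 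Each cross term then contributes $g(0,t)\,(-1)^{k+l}/(kl)\int(e^u-1)^{k+l}d\mathcal{M}(u)$ to leading order, and the L\'evy cross sum factorizes via $\sum_{k\geq 1}(-x)^k/k=-\log(1+x)$ with $x=e^u-1$ to give
\begin{equation*}
\int_{\mathbb{R}\setminus\{0\}}d\mathcal{M}(u)\Bigl(\sum_{k\geq 1}\frac{(-(e^u-1))^k}{k}\Bigr)^{\!2}=\int_{\mathbb{R}\setminus\{0\}}u^2\,d\mathcal{M}(u).
\end{equation*}
Combined with the $\sigma^2$ cross term $\sigma^2 g(0,t)$, this produces the claimed leading coefficient. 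The hard part will be the uniform-in-$\mu$ control of $v(\mu,\ldots)$ needed both for the asymptotic factorization in the cancellation step and for the leading-order scaling of the cross terms, together with justifying the interchange of the infinite $(k,l)$-sum with the $\tau\to 0$ limit; these rest on negative-moment estimates for $M_\mu(I_i)$ and on tail conditions on $d\mathcal{M}$.
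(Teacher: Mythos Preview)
Your structural decomposition---separate pure-interval from cross contributions, cancel the pure terms against the product $\partial_\mu{\bf E}[F_i]\cdot{\bf E}[F_j]$, then sum the cross terms using $\sum_{k\geq 1}(-(e^u-1))^k/k=-u$---is exactly what the paper does, and your identification of the leading coefficient is correct.

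The difference is that the paper sidesteps every hard analytic issue you raise by working only at $\mu=0$. It first declares that the covariance is linear in $\mu$ in the limit $\tau\to 0$ (heuristically, $\log M_\mu(t,t+\tau)\approx\log\tau+\omega_{\mu}(t)$ and the field's joint law scales linearly in $\mu$), so only the slope at $\mu=0$ is needed. That slope is read off directly from Corollary~\ref{TwoIntervalsZeroInter}, where the non-local functionals $v(\mu,\cdot)$ have already collapsed to the explicit products $F_1^{(k)}(|I_1|)\,F_2^{(l)}(|I_2|)$. At $\mu=0$ the factorization you want is therefore \emph{exact}: the pure-$I_1$ block of Corollary~\ref{TwoIntervalsZeroInter} is literally $F_2(\tau)$ times the right-hand side of Eq.~\eqref{theIDruleFirst} for $F_1$, so it cancels $(\partial_\mu{\bf E}[\log M_\mu(I_1)])|_{\mu=0}\cdot\log\tau$ on the nose, and symmetrically for $I_2$. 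No decorrelation argument, no uniform-in-$\mu$ estimates on $v(\mu,\ldots)$, and no interchange of limits with an infinite sum over $\mu$-dependent terms are needed. The remaining cross integrals are then approximated by $\tau^{k+l}g(0,t)/(k!\,l!)+O(\tau)$ and the log series does the rest, exactly as you wrote.

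Your route---compute $\partial_\mu{\bf Cov}$ at general $\mu$, show it equals the constant plus $O(\tau)$ uniformly, integrate---would if carried out give a self-contained proof that does not lean on the ``obvious'' linearity. But the cost is precisely the list of difficulties in your final paragraph: the asymptotic factorization $v(\mu,F_1^{(k)},F_2,s_1,\ldots,s_k)\approx v(\mu,F_1^{(k)},s_1,\ldots,s_k)\,{\bf E}[F_2(M_\mu(I_2))]$ and the scaling $v(\mu,F_1^{(k)},F_2^{(l)},\ldots)=(-1)^{k+l}(k-1)!(l-1)!\tau^{-(k+l)}(1+O(\tau))$ are not established anywhere in the paper for $\mu>0$, and proving them with the required uniformity would take real work. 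The paper's shortcut trades that work for the one-line linearity claim.
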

This result in the gaussian case was originally established in \cite{MRW} for the Bacry-Muzy measure and extended to the infinitely divisible case in \cite{MeLMP}.
Both calculations relied on a heuristic analytic continuation of joint integer moments. Corollary \ref{TwoIntervalsZeroInter} allows us to dispense with
the analytic continuation, as shown in Section 4.   

We will illustrate the rule of intermittency differentiation with the example of positive integer moments. 
Recall the formula for the moments in Eq. \eqref{rsinglemomformula}, assuming $n$  satisfies Eq. \eqref{finmom}.
The special case of the Bacry-Muzy measure is given in Eq. \eqref{singlemomformula} and corresponds to $r(t)=|t|.$
%%Recall the formula for the moments in Eq. \eqref{singlemomformula}, assuming $n$  satisfies Eq. \eqref{finmom} or the 
%%formula in \eqref{rsinglemomformula} that is valid for the general intensity measure. 
The intermittency derivative is then
\begin{equation}\label{derivsinglemomformula}
\frac{\partial}{\partial \mu} {\bf E}\bigl[M_\mu^n\bigr] 
= n! \!\!\!\!
\int\limits_{\{0<t_1<\cdots<t_n<1\}} \sum\limits_{k<p}^n d(p-k)\,g(t_p, t_k)
\prod\limits_{k<p}^n r(t_p-t_k)^{-\mu\,d(p-k)}
\,dt^{(n)}.
\end{equation}
On the other hand, given $F(x)=x^n,$ Theorem \ref{theoremIDdiff} gives us the formula
\begin{align} \frac{\partial}{\partial \mu} {\bf E}\bigl[M_{\mu}^n\bigr]  
& = \sigma^2 n(n-1) \!\!\!\!  \int\limits_{\{s_1<s_2\}}\!\!\! \lim\limits_{\varepsilon\rightarrow
0} {\bf E}\Bigl[M_{\mu, \varepsilon}^{n-2} \,
e^{ \omega_{\mu,\varepsilon}(s_1)+\omega_{\mu,\varepsilon}(s_2)}\Bigr] 
g(s_1,\,s_2)\,ds^{(2)}+ \nonumber \\ &+ \sum\limits_{k=2}^n \,\int\limits_{\mathbb{R}\setminus \{0\}}  \frac{n!}{(n-k)!}
(e^u-1)^k\,d\mathcal{M}(u) \times \nonumber \\ & \times  \int\limits_{\{s_1<\cdots<s_k\}} 
\lim\limits_{\varepsilon\rightarrow 0} {\bf E}\Bigl[M_{\mu, \varepsilon}^{n-k} \,
e^{ \omega_{\mu,\varepsilon}(s_1)+\cdots+\omega_{\mu,\varepsilon}(s_k)}\Bigr] 
g(s_1,\,s_k)\,ds^{(k)}.\label{theIDruleMom}
\end{align}
The equivalence of Eqs. \eqref{derivsinglemomformula} and \eqref{theIDruleMom} is a corollary of 
the following general integral identity.
\begin{thm}[Intermittency derivative of integer moments]\label{intidentity}
Let $\omega(s)$ and $g(s,\,t)$ be continuous functions and $k=2,\,\cdots, n.$ 
The identity
\begin{gather}
\frac{1}{(n-k)!}  \Bigl(\int_0^1 e^{\omega(s)} ds\Bigr)^{n-k}  \!\!\!\!\!\!\!
\int\limits_{\{s_1<\cdots<s_k\}} \!\!\!
e^{ \omega(s_1)+\cdots+\omega(s_k)}
g(s_1,\,s_k)\,ds^{(k)}   =  \nonumber \\
  \int\limits_{\{s_1<\cdots<s_n\}}  
e^{ \omega(s_1)+\cdots+\omega(s_n)}  \Bigl[\sum\limits_{\substack{i<j\\j-i\geq k-1}}^n 
\binom{j-i-1}{k-2} g(s_i, s_j)\Bigr] ds^{(n)}, \label{intidentityeq}
\end{gather}
implies the equality of the right-hand sides of Eqs. \eqref{derivsinglemomformula} and \eqref{theIDruleMom}. 
\end{thm}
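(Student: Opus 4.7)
The plan is first to establish the pointwise identity \eqref{intidentityeq} by a combinatorial shuffle argument, then to apply it inside the expectation and pass to the limit $\varepsilon\to 0$, recognizing the resulting coefficient of $g(t_i,t_j)$ as precisely $d(j-i)$ via the binomial theorem.

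For the identity itself, I would expand
\[\Bigl(\int_0^1 e^{\omega(s)}\,ds\Bigr)^{n-k} = \int_{[0,1]^{n-k}} e^{\omega(u_1)+\cdots+\omega(u_{n-k})}\,du^{(n-k)},\]
and symmetrize over $u_1,\dots,u_{n-k}$ to replace the hypercube by the ordered simplex $\{u_1<\cdots<u_{n-k}\}$; this produces a factor $(n-k)!$ that cancels the reciprocal prefactor on the LHS. By Fubini, the LHS of \eqref{intidentityeq} then becomes an integral over the product of two ordered simplices in the $s$- and $u$-variables. Merging these into a single ordered $n$-simplex $\{t_1<\cdots<t_n\}$ is a shuffle: for each such tuple, the configurations are in bijection with subsets $S\subset\{1,\dots,n\}$ of size $k$ identifying which positions in the merged ordering originated from the $s$-variables. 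The weight $g(s_1,s_k)$ depends only on the smallest and largest positions in $S$, namely $t_{\min S}$ and $t_{\max S}$. Grouping subsets by $(i,j)=(\min S,\max S)$, the number of such subsets is obtained by choosing the remaining $k-2$ indices from the $j-i-1$ positions strictly between $i$ and $j$, i.e.\ $\binom{j-i-1}{k-2}$, requiring $j-i\geq k-1$. This is exactly the bracketed sum on the right of \eqref{intidentityeq}.

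For the deduction of the equivalence of \eqref{derivsinglemomformula} and \eqref{theIDruleMom}, I would apply the identity pathwise to $\omega(s)=\omega_{\mu,\varepsilon}(s)$, take expectations, interchange sum/integral/expectation by Fubini, and let $\varepsilon\to 0$. The joint exponential moment tends to $\prod_{m<p} r(t_p-t_m)^{-\mu d(p-m)}$ by the general dependence Lemma \ref{main}, which is exactly the computation underlying \eqref{singlemomformula}. Substituting into \eqref{theIDruleMom} and grouping the Gaussian piece ($\sigma^2 n(n-1)$, corresponding to $k=2$) with the $k$-th jump term $\frac{n!}{(n-k)!}\int(e^u-1)^k\,d\mathcal{M}(u)$, the coefficient of $g(t_i,t_j)$ for each fixed pair $i<j$ becomes
\[\sigma^2 + \sum_{k=2}^{j-i+1}\binom{j-i-1}{k-2}\int_{\mathbb{R}\setminus\{0\}}(e^u-1)^k\,d\mathcal{M}(u).\]

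Setting $\ell=k-2$ and pulling out $(e^u-1)^2$, the binomial theorem collapses $\sum_\ell \binom{j-i-1}{\ell}(e^u-1)^\ell = e^{(j-i-1)u}$, so the bracket equals $\sigma^2 + \int(e^u-1)^2 e^{(j-i-1)u}\,d\mathcal{M}(u) = d(j-i)$ by definition \eqref{d}. Relabeling $(i,j)\mapsto(k,p)$ recovers exactly the integrand of \eqref{derivsinglemomformula}. The main obstacle I anticipate is not the combinatorics but the soft-analytic justification of the limit $\varepsilon\to 0$ and of the exchange of the $k$-summation with the $d\mathcal{M}$-integration; these will need to be controlled via dominated convergence using the non-degeneracy condition \eqref{nondeg} and the moment finiteness hypothesis \eqref{finmom}.
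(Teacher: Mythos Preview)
Your proposal is correct and essentially coincides with the paper's own argument: the shuffle/merging count you give for the integral identity \eqref{intidentityeq} is exactly the paper's insertion argument (the paper phrases it as placing the $n-k$ auxiliary variables among the $s_1<\cdots<s_k$ and counting placements with $s_1$ at position $i$ and $s_k$ at position $j$), and your binomial collapse $\sum_\ell\binom{j-i-1}{\ell}(e^u-1)^\ell=e^{(j-i-1)u}$ to recover $d(j-i)$ is precisely the paper's identity \eqref{dsumeq} run in reverse. One minor remark: the analytic worries you flag at the end are not real obstacles here, since for $F(x)=x^n$ the $k$-sum is finite and the $\varepsilon\to 0$ limit is already handled by the established moment identity \eqref{momrhoidentity}.
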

Its proof is given in Section 4. It is worth pointing out that the equivalence of
Eqs. \eqref{derivsinglemomformula} and \eqref{theIDruleMom} in the special case of GMC follows from
the case of $k=2$ of this identity. %%which we discussed in \cite{MeIntRen}. 
The general case is new and significantly more involved.  

We will conclude this section with a discussion of higher intermittency derivatives, which one wants to compute
to derive the full high temperature (low intermittency)\footnote{The intermittency parameter $\mu$ is usually expressed in terms of the inverse temperature $\beta$ in the form $\mu=2\beta^2$ in the statistical physics literature.} expansion, as we did for the Mellin transform
of the total mass of the GMC measure in \cite{Me4}, \cite{MeIMRN}, \cite{MeIntRen}.
It is clear from the structure of the first intermittency derivative in Theorem \ref{theoremIDdiff} that
in order to compute higher derivatives, \emph{i.e.} to apply the differentiation rule iteratively, one 
needs to establish a differentiation rule for the non-local functionals of the total mass 
that we introduced in Eq. \eqref{generfunctional}.
%%\begin{equation}
%%v(\mu, F, t_1\cdots t_n)\triangleq \lim\limits_{\varepsilon\rightarrow 0} {\bf E}\Bigl[F\bigl(M_{\mu, \varepsilon}\bigr) 
%%e^{ \omega_{\mu,\varepsilon}(t_1)+\cdots+\omega_{\mu,\varepsilon}(t_n)}\Bigr],
%%\end{equation}
%%instead of  $\lim\limits_{\varepsilon\rightarrow 0} {\bf E}\Bigl[F\bigl(M_{\mu, \varepsilon}\bigr)    
%%which are \emph{non-local}, \emph{i.e.} involve the entire path of $dM_\mu$ as opposed to
%%the value of the total mass of the whole interval. 
%%As we will see below, 
%the functionals in Eq. \eqref{generfunctional} are in fact invariant under intermittency differentiation. 
%%They are well-defined for sufficiently small intermittency due to the normalization condition $\phi(-i)=0,$ 
%%cf. Eq. \eqref{phi}, which implies
%%\begin{equation}
%%{\bf E}\bigl[e^{\omega_{\mu,\varepsilon}(s)}\bigr]=1
%%\end{equation}
The functional in Eq. \eqref{generfunctional} can be naturally interpreted as a change of probability measure.
In the case of GMC this change of measure is known explicitly, cf. \cite{Me2} and \cite{Me3}, and
the functional in Eq. \eqref{generfunctional} is a simple change of drift.
In fact, one has the identity,
\begin{align}
 {\bf E}\Bigl[F\bigl(M_{\mu, \varepsilon}\bigr) 
e^{ \omega_{\mu,\varepsilon}(t_1)+\cdots+\omega_{\mu,\varepsilon}(t_n)}\Bigr] = & \exp\Bigl(\mu\sum\limits_{i<j}^n 
\rho_\varepsilon(t_j-t_i)\Bigr) \times \nonumber \\ & \times  {\bf E}\Bigl[F\Bigl(\int_0^1 e^{ \omega_{\mu,\varepsilon}(s)+\mu\sum_{j=1}^n \rho_\varepsilon(s - t_j)} \, ds\Bigr)\Bigr], \label{GirsanovGaussian}
\end{align}
so that 
\begin{align}
v(\mu, F, \, t_1,\cdots t_n) 
%%\lim\limits_{\varepsilon\rightarrow 0} {\bf E}\Bigl[F\bigl(M_{\mu, \varepsilon}\bigr) 
%%e^{ \omega_{\mu,\varepsilon}(t_1)+\cdots+\omega_{\mu,\varepsilon}(t_n)}\Bigr] 
= & \exp\Bigl(\mu\sum\limits_{i<j}^n 
g(t_i,\,t_j)\Bigr) {\bf E}\Bigl[F\Bigl(\int_0^1 e^{ \mu\sum_{j=1}^n g(s, t_j)} \, dM_\mu(s)\Bigr)\Bigr], \label{GirsanovGaussianLim}
\end{align}
which is manifestly non-local.  The functional on the right-hand side of Eq. \eqref{GirsanovGaussianLim} is of the form
\begin{equation}\label{thefunctional}
v(\mu,\,f,\,F) \triangleq {\bf E}\Bigl[F\Bigl(\int_0^1 e^{\mu
f(s)}\,M_\mu(ds)\Bigr)\Bigr],
\end{equation}
where $f(s)$ in our case equals
\begin{equation}
f(s) = \sum_{j=1}^n g(s, t_j).
\end{equation}
The intermittency differentiation rule in the gaussian case for a general $f(s)$ is, cf. \cite{Me3}, \cite{MeIMRN}, and \cite{MeIntRen},
\begin{align} \frac{\partial}{\partial \mu} v(\mu,\,
f,\,F) & = \int\limits_{[0,\,1]} v\bigl(\mu,\,f+g(\cdot,
s),\,F^{(1)}\bigr) e^{\mu f(s)}f(s)\,ds+ \nonumber \\
& +\!\!\!\!\!\int\limits_{\{s_1<s_2\}} \!\!\! v\bigl(\mu,
f+g(\cdot,s_1)+g(\cdot,s_2),F^{(2)}\bigr)
e^{\mu\bigl(f(s_1)+f(s_2)+g(s_1,s_2)\bigr)} \times \nonumber \\ & \times g(s_1,
s_2)\,d s^{(2)}.\label{therule}
\end{align}
Hence, applying it to the functional in Eq. \eqref{generfunctional} and using Eq. \eqref{GirsanovGaussianLim}, we obtain
the desired rule of differentiation that allows one to compute intermittency derivatives of all orders in the gaussian case.
\begin{thm}[GMC intermittency differentiation]\label{GMC}
\begin{align}
\frac{\partial}{\partial \mu} v(\mu,\,F, \,t_1\cdots t_n)  & =  v(\mu,\,F, \,t_1\cdots t_n) \, \sum\limits_{i<j}^n g(t_i,\ t_j)  + \nonumber \\ & + 
\int\limits_0^1 v\bigl(\mu,\,F^{(1)}, \,t_1\cdots t_{n+1}\bigr) \sum_{j=1}^n g(t_j, t_{n+1})\,dt_{n+1}+ \nonumber \\
 & + \!\!\!\!\!\!
\int\limits_{\{t_{n+1}<t_{n+2}\}} \!\!\!\!\!\! v\bigl(\mu,\,F^{(2)}, \,t_1\cdots t_{n+2}\bigr) \,g(t_{n+1}, t_{n+2})\, dt_{n+1}\,dt_{n+2}.
\end{align}
\end{thm}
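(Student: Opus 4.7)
The plan is to derive Theorem~\ref{GMC} as a corollary of the gaussian differentiation rule~\eqref{therule} by exploiting the Girsanov-type identity~\eqref{GirsanovGaussianLim}, which re-expresses the non-local functional $v(\mu, F, t_1\cdots t_n)$ in terms of the auxiliary functional $v(\mu, f, F)$ of Eq.~\eqref{thefunctional}. Setting $f(u)\triangleq \sum_{j=1}^n g(u, t_j)$ and $\Phi_n\triangleq \sum_{i<j}^n g(t_i, t_j)$, the identity reads
\begin{equation*}
v(\mu, F, t_1\cdots t_n) = e^{\mu \Phi_n} \, v(\mu, f, F),
\end{equation*}
so the dependence on $t_1,\ldots,t_n$ is repackaged into the function $f$ and the scalar $\Phi_n$.

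Differentiating in $\mu$ by the product rule gives
\begin{equation*}
\frac{\partial}{\partial \mu} v(\mu, F, t_1\cdots t_n) = \Phi_n \, v(\mu, F, t_1\cdots t_n) + e^{\mu \Phi_n} \, \frac{\partial}{\partial \mu} v(\mu, f, F),
\end{equation*}
whose first summand is precisely the first line of~\eqref{GMC}. For the second summand I would substitute~\eqref{therule}, producing a single integral containing $v(\mu, f+g(\cdot, s), F^{(1)})$ weighted by $e^{\mu f(s)} f(s)$, and a double integral containing $v(\mu, f+g(\cdot, s_1)+g(\cdot, s_2), F^{(2)})$ weighted by $e^{\mu(f(s_1)+f(s_2)+g(s_1,s_2))} g(s_1,s_2)$.

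To re-express these in the form of $v(\mu, F^{(k)}, t_1\cdots t_{n+k})$, I would apply~\eqref{GirsanovGaussianLim} in the reverse direction with $n+1$ and $n+2$ points, identifying $t_{n+1}=s$ in the single integral and $t_{n+1}=s_1,\, t_{n+2}=s_2$ in the double integral. The key algebraic observations are $f(\cdot)+g(\cdot, s)=\sum_{j=1}^{n+1} g(\cdot, t_j)$ and the telescoping identities
\begin{equation*}
\Phi_{n+1} = \Phi_n + f(s), \qquad \Phi_{n+2} = \Phi_n + f(s_1) + f(s_2) + g(s_1,s_2),
\end{equation*}
each obtained by splitting the sum over pairs $i<j$ according to whether both indices lie in $\{1,\ldots,n\}$. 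With these in hand, every exponential weight produced by~\eqref{therule} is absorbed exactly by the prefactor $e^{\mu\Phi_{n+k}}$ demanded by Girsanov, and $f(s)=\sum_{j=1}^n g(t_j,s)$ recovers the coefficient $\sum_{j=1}^n g(t_j,t_{n+1})$ in the second line of~\eqref{GMC}. The double integral matches the third line by the same mechanism.

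The main obstacle is purely bookkeeping: verifying that the exponential factors generated by~\eqref{therule} line up precisely with the Girsanov prefactors $e^{\mu\Phi_{n+k}}$. No further analytic input is required beyond the validity of~\eqref{therule} and~\eqref{GirsanovGaussianLim}, into which all convergence questions have already been absorbed.
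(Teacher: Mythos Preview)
Your proposal is correct and follows exactly the approach the paper indicates just before stating Theorem~\ref{GMC}: apply the Girsanov identity~\eqref{GirsanovGaussianLim} to write $v(\mu,F,t_1\cdots t_n)=e^{\mu\Phi_n}v(\mu,f,F)$, differentiate, invoke the rule~\eqref{therule}, and convert back using the telescoping relations $\Phi_{n+k}=\Phi_n+\sum f(s_i)+\ldots$. The paper also remarks that an alternative derivation via the limit~\eqref{thedeltalimitnew} (the special case of Theorem~\ref{IDMC} with $d\mathcal{M}=0$) is possible, but your route is the one the paper itself presents as the proof of Theorem~\ref{GMC}.
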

We refer the interested reader to \cite{MeIntRen} for a detailed treatment of the gaussian case, including the computation of all high order derivatives and proof of renormalizibility of the resulting high temperature expansion, cf. Eq. \eqref{renormone}. 

In the general ID case the equivalents of the change of measure in Eq. \eqref{GirsanovGaussian} and of the functional in Eq. \eqref{thefunctional}
are not known to us and left as open questions. Instead, we will proceed differently and replace the limit in Eq. \eqref{thedeltalimit} with the more general limit
\begin{equation}\label{thedeltalimitnew}
\frac{\partial}{\partial\delta}\Big\vert_{\delta=0}\,\,
{\bf{E}} \Bigl[ F\bigl(ze^{X(\delta)} \,M_{\mu, \varepsilon}\bigr)\bigl(ze^{X(\delta)}\bigr)^n\,e^{ \omega_{\mu,\varepsilon}(t_1)+\cdots+\omega_{\mu,\varepsilon}(t_n)}\Bigr].
\end{equation}
It is not difficult to see that the original derivation goes through intact and results in the following rule of differentiation that generalizes Theorem \ref{GMC} to the ID case
and extends Theorem \ref{theoremIDdiff} to $n>0.$ 
\begin{thm}[IDMC intermittency differentiation]\label{IDMC} 
Given $0\leq k\leq n,$ let $(p_1<\cdots <p_k)$ denote an increasing tuple of numbers from $1,\,\cdots, n$ of length $k$ 
and $\sum_{(p_1<\cdots < p_k)}$ denote the sum over all such $k-$tuples.\footnote{%%The sum equals 1 if $k=0$
$\sum_{(p_1<\cdots < p_{k})} g\bigl(\min\{t_{p_1}, t_{n+1}\}, \max\{t_{p_{k}}, t_{n+l}\}\bigr) = g(t_{n+1},\,t_{n+l})$ if $k=0.$} Fix a test function $F(x)$ and let $t_1<\cdots < t_n.$ Then,
\begin{gather}
\frac{\partial}{\partial \mu} v(\mu,\,F, \,t_1\cdots t_n)   =  \sigma^2\Bigl[v(\mu,\,F, \,t_1\cdots t_n) \, \sum\limits_{i<j}^n g(t_i,\ t_j)  + \nonumber \\  + 
\int\limits_0^1 v\bigl(\mu,\,F^{(1)}, \,t_1\cdots t_{n+1}\bigr) \sum_{j=1}^n g(t_j, t_{n+1})\,dt_{n+1}+ \nonumber \\
  + \!\!\!\!\!\!\!\!
\int\limits_{\{t_{n+1}<t_{n+2}\}} \!\!\!\!\!\!\!\! v\bigl(\mu,\,F^{(2)}, \,t_1\cdots t_{n+2}\bigr) \,g(t_{n+1}, t_{n+2})\, dt_{n+1}\,dt_{n+2}\Bigr]+ \nonumber \\
  + \sum\limits_{\substack{k+l\geq 2 \\ k\leq n}} \,\int\limits_{\mathbb{R}\setminus \{0\}} 
(e^u-1)^{k+l}\,d\mathcal{M}(u) \! \,\int\limits_{\{t_{n+1}<\cdots<t_{n+l}\}} 
\!\!\!\!\!\!\!\! 
v(\mu,\,F^{(l)}, \,t_1\cdots t_{n+l}) \times \nonumber \\ \times \sum\limits_{(p_1<\cdots < p_{k})} g\Bigl(\min\{t_{p_1}, t_{n+1}\}, \max\{t_{p_{k}}, t_{n+l}\}\Bigr) \, dt_{n+1}\cdots dt_{n+l}.\label{theruleID}
\end{gather}
\end{thm}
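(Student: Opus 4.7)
The plan is to follow the derivation of Theorem \ref{theoremIDdiff} (the $n=0$ case) verbatim, starting now from the generalized flow \eqref{thedeltalimitnew} in place of \eqref{thedeltalimit}. Fix $L=1$, $z$ and set $\Phi(\delta) \triangleq {\bf E}[F(ze^{X(\delta)}M_{\mu,\varepsilon})(ze^{X(\delta)})^n e^{\omega_{\mu,\varepsilon}(t_1)+\cdots+\omega_{\mu,\varepsilon}(t_n)}]$. The strategy is to compute $\Phi'(0)$ in two independent ways, equate them, take $\varepsilon\to 0$ with $z=1$, and solve for $\partial_\mu v(\mu, F, t_1,\ldots,t_n)$.

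For Method A I use the backward Kolmogorov equation. Conditioning on the $\omega$-field and invoking the L\'evy generator of $X(\delta)$ applied to $G(y) = y^n F(y M_{\mu,\varepsilon}) e^{\sum_j\omega_{\mu,\varepsilon}(t_j)}$ at $y=1$, I obtain $\Phi'(0) = {\bf E}\bigl[\tfrac{\sigma^2}{2}G''(1) + \int(G(e^u)-G(1)-(e^u-1)G'(1))\,d\mathcal M(u)\bigr]$. Using $e^{un} = (1+(e^u-1))^n$ together with the Taylor series $F(e^u M) = \sum_j F^{(j)}(M)M^j(e^u-1)^j/j!$, the jump integrand reorganizes as $\sum_{m\ge 2}(e^u-1)^m \sum_{i+j=m}\binom{n}{i}F^{(j)}(M) M^j/j!$. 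Fubini combined with $M^j/j! = \int_{s_1<\cdots<s_j}\prod_\alpha e^{\omega(s_\alpha)}ds^{(j)}$ converts each summand into an iterated integral of $v(\mu, F^{(j)}, s_1,\ldots,s_j, t_1,\ldots,t_n)$; the $\tfrac{\sigma^2}{2}G''(1)$ piece analogously produces three sub-cases $(j, i) = (2, 0), (1, 1), (0, 2)$ weighted by $\binom{n}{i}$. Method A thus expresses $\Phi'(0)$ as a sum of ``local'' terms involving only $v(\mu, F^{(j)}, \cdot)$ integrated against the constant measure $ds^{(j)}$, with no $g$-factors.

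For Method B I invoke Theorem \ref{IntInv} to replace the joint law in $u$ of $X(\delta)+\omega_{\mu,\varepsilon}(u)$ by that of $\omega_{\mu-\delta,\varepsilon}(u)+\bar\omega_{\delta,e,\varepsilon}(u)$. Decomposing $\int_0^1 e^{\omega_{\mu-\delta}+\bar\omega_{\delta,e}}du = M_{\mu-\delta,\varepsilon}+R_\delta$ with $R_\delta = \int e^{\omega_{\mu-\delta}}(e^{\bar\omega_{\delta,e}}-1)du$ and expanding $\prod_j e^{\omega_{\mu-\delta}(t_j)+\bar\omega_{\delta,e}(t_j)} = e^{\sum_j\omega_{\mu-\delta}(t_j)}\sum_{S\subset\{1,\ldots,n\}}\prod_{j\in S}(e^{\bar\omega_{\delta,e}(t_j)}-1)$, I Taylor-expand $F(M_{\mu-\delta}+R_\delta)$ in powers of $R_\delta$ and take expectations. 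Because $\omega_{\mu-\delta,\varepsilon}$ and $\bar\omega_{\delta,e,\varepsilon}$ are independent, the expectation factorizes. The $\omega_{\mu-\delta}$-factor yields $v(\mu-\delta, F^{(k)}, s_1,\ldots,s_k, t_1,\ldots,t_n)$ (with $k$ the number of $s$-variables from the $F$-expansion), and the $(k, S)=(0,\emptyset)$ term produces $-\partial_\mu v(\mu, F, t_1,\ldots,t_n)$ via $v(\mu-\delta,\ldots) = v(\mu,\ldots) - \delta\partial_\mu v(\mu,\ldots) + o(\delta)$. The $\bar\omega$-factor over the union of $k+|S|$ points is evaluated by writing $\bar\omega_{\delta,e,\varepsilon} = \omega_{\delta,\varepsilon}+Z_e$, where $Z_e$ is the ID shift of intermittency $\delta\log L = \delta$, applying Lemma \ref{BLemma} to $\omega_{\delta,\varepsilon}$ conditional on $Z_e$, and averaging over $Z_e$; the result is $\delta[\log L - \log|y_{\max}-y_{\min}|]\cdot[\sigma^2\mathbf{1}_{k+|S|=2}+\int(e^u-1)^{k+|S|}d\mathcal M]+o(\delta)$, which splits into a ``local'' $\log L = 1$ piece and a ``non-local'' $g(y_{\min}, y_{\max}) = -\log|y_{\max}-y_{\min}|$ piece.

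Setting the two expressions for $\Phi'(0)$ equal and solving for $\partial_\mu v$, the ``local'' Method B contributions match the Method A contributions exactly: the counting identity $\sum_{S:|S|=i}1 = \binom{n}{i}$ pairs subsets of size $i$ with the binomial coefficient, and for $k+|S|=2$ the three sub-cases $(k, |S|) = (2,0), (1,1), (0,2)$ align with the three Method A sub-cases $(j, i) = (2,0), (1,1), (0,2)$. What remains on the right-hand side is exactly the non-local sum \eqref{theruleID}, once one identifies $(l, k)_{\mathrm{thm}}$ with $(k, |S|)_{\mathrm{above}}$, the sum $\sum_{p_1<\cdots<p_k}$ with the sum over subsets $S\subset\{1,\ldots,n\}$ of size $k_{\mathrm{thm}}$, and $g(\min\{t_{p_1}, t_{n+1}\}, \max\{t_{p_k}, t_{n+l}\})$ with $g(y_{\min}, y_{\max})$ over the union of the selected $t_{p_j}$'s and the new integration variables $s_1<\cdots<s_k$. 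The main obstacle is this combinatorial bookkeeping, and in particular the verification that the $L=e$ shift constants from Lemma \ref{BLemma} cancel the Method A local terms; this cancellation is the same mechanism already at work in the $n=0$ proof of Theorem \ref{theoremIDdiff}, which is what allows the extension to ``go through intact'' as asserted.
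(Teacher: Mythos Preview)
Your proposal is correct and follows the same two-way computation of the generator limit \eqref{thedeltalimitnew} that the paper sketches: Method A via Lemma \ref{KolmogorovID}, Method B via Theorem \ref{IntInv} and Lemma \ref{BLemma}, with the $\rho_{e,\varepsilon}=1+\rho_\varepsilon$ split producing the local/non-local cancellation.

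The one difference worth flagging is your choice of base point in Method B. The paper expands around $\omega_{\mu,\varepsilon}$, writing $\omega_{\mu-\delta}+\bar\omega=\omega_\mu+(\mathcal{A}_\varepsilon+\bar{\mathcal{A}}_\varepsilon)$ and applying Lemma \ref{BLemma} with the random drift $\mathfrak{f}=\mathcal{A}_\varepsilon$; the leading term $\prod(e^{\mathcal{A}_\varepsilon}-1)$ must then be re-summed via the mechanism of Lemma \ref{lemmadiffID} to produce $-\partial_\mu v$. You instead expand around $\omega_{\mu-\delta,\varepsilon}$, so that the two factors $\omega_{\mu-\delta}$ and $\bar\omega_{\delta,e}$ are independent and the expectation factorizes outright; Lemma \ref{BLemma} is then invoked with $\mathfrak{f}=0$, the $(k,S)=(0,\emptyset)$ term is $v_\varepsilon(\mu-\delta,F,t_1,\ldots,t_n)$ directly, and $-\partial_\mu v$ drops out without any re-summation. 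This is a mild but genuine simplification: it bypasses Lemma \ref{lemmadiffID} entirely at the cost of nothing, since the $k+|S|=1$ terms vanish by the normalization $\phi(-i)=0$. Your detour through $\bar\omega_{\delta,e,\varepsilon}=\omega_{\delta,\varepsilon}+Z_e$ and conditioning on $Z_e$ is unnecessary, though, as Lemma \ref{BLemma} already applies directly to $\bar\omega_{\delta,e,\varepsilon}$ with $L=e$.
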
 
A proof of Theorem \ref{IDMC} based on Eq. \eqref{thedeltalimitnew} is sketched in Section 4. 
We note that the derivation of Theorem \ref{GMC} from Eq. \eqref{thedeltalimitnew}, as opposed to
Eqs. \eqref{thedeltalimit} and \eqref{GirsanovGaussianLim}, is also new for the GMC measure.  
It is now obvious that repeated application of Eq. \eqref{theruleID} at $\mu=0$ produces the full high temperature
expansion. Its detailed analysis, especially the question of finding an analogue of the gaussian renormalizability condition
in Eq. \eqref{renormone},
  is beyond the scope of this paper and left to future research. Another interesting open
question is to identify the ID counterpart of the general class of functionals of the GMC measure in Eq. \eqref{thefunctional} 
that remain invariant under intermittency differentiation. We believe that the class of functionals that we defined in
Eq. \eqref{generfunctional} is the minimal invariant class and that it should be possible to describe a wider class by determining
the ID extension of the Girsanov transformation in Eq. \eqref{GirsanovGaussian}.

\section{Derivation of the intermittency differentiation rule}
\noindent In this section we will give derivations of our results. We note
that Theorem \ref{theoremIDdiff} is a special case of Theorems \ref{theoremIDdiffTwo} and \ref{IDMC}.
However, for the sake of clarity, we will focus in this section on the proof of Theorem \ref{theoremIDdiff} 
and only sketch the proofs of Theorems \ref{theoremIDdiffTwo} and \ref{IDMC} to minimize redundancy.
%% of the intermittency differentiation rule and proofs of Lemmas \ref{dsum} and \ref{intidentity}. 
%%Our arguments are exact at every step but not mathematically rigorous
%as we shun all questions of convergence and operate on formal power series. 

\begin{proof}[Proof of Theorem \ref{IntInv}]
The proof is based on Lemma \ref{main} and a special property of the function
$\rho_{\varepsilon}(u)$ in Eq. \eqref{rho} or, more generally, Eq. \eqref{rhogeneral}. First, define
\begin{equation}\label{rhoL}
\rho_{L,\varepsilon}(u) \triangleq \log L + \rho_{\varepsilon}(u).
\end{equation}
Then, it is easy to see from  Eq. \eqref{rho} or \eqref{rhogeneral} that $\rho_{L,\varepsilon}(u) $ satisfies the identity for $|u|<1,$
\begin{equation}\label{rhoID}
\delta+\mu\rho_{L,\varepsilon}(u) =
(\mu-\delta)\rho_{L,\varepsilon}(u)+\delta\rho_{eL,\varepsilon}(u).
\end{equation}
On the other hand, Lemma \ref{main} gives us the joint
characteristic function of $\omega_{\mu,L,\varepsilon}(t_j),$ 
$j=1\cdots n,$ $t_1<\cdots < t_n,$ in the form
\begin{equation}\label{maincharL}
{\bf E}\Biggl[\exp\Bigl(i\sum_{j=1}^n q_j \omega_{\mu,L,\varepsilon}(t_j)\Bigr)\Biggr] = \exp\Bigl(\mu\sum_{p=1}^n
\sum_{k=1}^p \alpha_{p,k} \,\rho_{L,\varepsilon}(t_p-t_k)\Bigr),
\end{equation}
where $\rho_{L,\varepsilon}(u)$ is defined in Eq. \eqref{rhoL} and
the coefficients $\alpha_{p,k}$ are the same as in Eq. \eqref{alpha}. In fact,
\begin{gather}
{\bf E}\Biggl[\exp\Bigl(i\sum_{j=1}^n q_j \omega_{\mu,L,\varepsilon}(t_j)\Bigr)\Biggr]  = {\bf E}\Bigl[\exp\Bigl(i Z_L\sum_{j=1}^n q_j \Bigr)\Bigr]
{\bf E}\Biggl[\exp\Bigl(i\sum_{j=1}^n q_j \omega_{\mu,\varepsilon}(t_j)\Bigr)\Biggr], \nonumber \\
 = \exp\Bigl(\mu \phi\bigl(\sum_{j=1}^n q_j\bigr) \log L\Bigr) \exp\Bigl(\mu\sum_{p=1}^n
\sum_{k=1}^p \alpha_{p,k} \,\rho_{\varepsilon}(t_p-t_k)\Bigr),  \nonumber \\
 = \exp\Bigl(\mu \sum_{p=1}^n\sum_{k=1}^p \alpha_{p,k} \log L\Bigr) \exp\Bigl(\mu\sum_{p=1}^n
\sum_{k=1}^p \alpha_{p,k} \,\rho_{\varepsilon}(t_p-t_k)\Bigr)
\end{gather}
by Eqs. \eqref{mainchar} and \eqref{sumID} so that Eq. \eqref{maincharL} follows from Eq. \eqref{rhoL}. 
We can now compute the joint characteristic function of the left- and right-hand sides of
Eq. \eqref{theinvariance}. Using Eqs. \eqref{sumID} and \eqref{maincharL},
\begin{equation}
{\bf E}\Biggl[\exp\Bigl(i\sum_{j=1}^n q_j \Bigl(X(\delta)+\omega_{\mu,L,\varepsilon}(t_j)\Bigr)\Biggr] = 
\exp\Bigl(\sum_{p=1}^n
\sum_{k=1}^p \alpha_{p,k} \bigl(\delta+\mu\rho_{L,\varepsilon}(t_p-t_k)\bigr)\Bigr).
\end{equation}
On the other hand, we have by independence and Eq. \eqref{maincharL},
\begin{gather}
{\bf E}\Biggl[\exp\Bigl(i\sum_{j=1}^n q_j \Bigl(\omega_{\mu-\delta,L,\varepsilon}(t_j)+\bar{\omega}_{\delta, e L, \varepsilon}(t_j)\Bigr)\Biggr]= \nonumber \\  \exp\Bigl(\sum_{p=1}^n 
\sum_{k=1}^p \alpha_{p,k} \bigl((\mu-\delta) \rho_{L,\varepsilon}(t_p-t_k)+ \delta\rho_{eL,\varepsilon}(t_p-t_k) \bigr)\Bigr),
\end{gather}
and the result follows from Eq. \eqref{rhoID}.
\end{proof}

The derivation of the intermittency differentiation rule requires three auxiliary lemmas. Recall the definition of the L\'evy process in Eq. \eqref{Xdelta}
that is associated with the ID distribution specified by its L\'evy-Khinchine representation in Eq. \eqref{phi}.
\begin{lemma}[Kolmogorov equation]\label{KolmogorovID}
Given a test function $v(z),$ %%we backward Kolmogorov equation is
\begin{align}
\frac{\partial}{\partial\delta}\Big\vert_{\delta=0}\,\,
{\bf E} \left[v\bigl(z e^{X(\delta)}\bigr)\right] = & \frac{\sigma^2}{2}z^2\frac{d^2}{dz^2} v(z) + \nonumber \\ + &
\int_{\mathbb{R}\setminus \{0\}} \Bigl[v(ze^u)-v(z)-z \frac{d}{dz}v(z) (e^u-1)\Bigr] d\mathcal{M}(u).
\end{align} 
\end{lemma}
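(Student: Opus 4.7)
The plan is to identify the left-hand side as the infinitesimal generator $\mathcal{L}$ of the L\'evy process $X(\delta),$ applied to the auxiliary function $w(y) := v(z e^y)$ and evaluated at $y = 0.$ From the form of $\phi(q)$ in Eq. \eqref{phi}, in which the linear term $-iq\sigma^2/2$ acts as a deterministic drift and $e^u - 1$ plays the role of the truncation function (precisely the choice that enforces the Kahane normalization $\phi(-i) = 0$), one reads off
\begin{equation}\label{LKgen}
\mathcal{L}w(y) = -\frac{\sigma^2}{2} w'(y) + \frac{\sigma^2}{2} w''(y) + \int\limits_{\mathbb{R}\setminus\{0\}} \Bigl(w(y+u) - w(y) - w'(y)(e^u-1)\Bigr) d\mathcal{M}(u)
\end{equation}
as the generator on a suitable class of smooth test functions.

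First I would verify \eqref{LKgen} on the complex exponentials $w(y) = e^{iqy}$: direct substitution yields $\mathcal{L}w(y) = \phi(q)\,e^{iqy},$ which matches $\frac{\partial}{\partial\delta}\Big\vert_{\delta=0}{\bf E}[e^{iq(y+X(\delta))}] = \phi(q)\,e^{iqy}$ by Eq. \eqref{Xdelta}. Extension from exponentials to a suitable class of smooth bounded test functions is standard, either by Fourier inversion or via the L\'evy-It\^o decomposition of $X$ combined with It\^o's formula for semimartingales with jumps. One then invokes the semigroup identity $\frac{\partial}{\partial\delta}\Big\vert_{\delta=0}{\bf E}[w(X(\delta))] = \mathcal{L}w(0),$ valid since $X(0) = 0.$

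Next I would apply \eqref{LKgen} to $w(y) = v(z e^y).$ Routine differentiation at $y=0$ gives $w'(0) = z\,v'(z)$ and $w''(0) = z^2 v''(z) + z\,v'(z),$ so that the two first-derivative contributions $-\frac{\sigma^2}{2} z v'(z)$ and $+\frac{\sigma^2}{2} z v'(z)$ cancel exactly, leaving precisely $\frac{\sigma^2}{2} z^2 v''(z).$ This cancellation is the concrete manifestation of the normalization $\phi(-i)=0.$ Substituting into \eqref{LKgen} then yields the claimed identity.

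The main technical obstacle is justifying the interchange of ${\bf E}[\cdot],$ $\partial/\partial\delta$ at $\delta=0,$ and the integral against $d\mathcal{M}(u).$ Under mild regularity on $v$ (say $v \in C^2$ with bounded first and second derivatives), the integrand $v(z e^u) - v(z) - z v'(z)(e^u-1)$ is $O\bigl((e^u-1)^2\bigr)$ near $u = 0$ by a second-order Taylor expansion and is bounded uniformly in $u$ on the complement, so its integrability against $d\mathcal{M}$ follows from the property $\int_{\mathbb{R}\setminus\{0\}} \bigl((e^u-1)^2 \wedge 1\bigr)\,d\mathcal{M}(u) < \infty$ implicit in the L\'evy-Khinchine representation \eqref{phi} with truncation $(e^u-1).$ Dominated convergence then legitimizes passing the $\delta$-derivative through both the expectation and the integral, completing the argument.
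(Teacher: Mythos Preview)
Your proposal is correct and follows essentially the same route as the paper: identify the backward Kolmogorov generator of $X(\delta)$ from the L\'evy--Khinchine representation \eqref{phi}, then apply it to $f(x)=v(ze^x)$ at $x=0$ and observe the cancellation of the first-derivative terms. If anything, you supply more detail than the paper's proof, which simply writes down the generator and says ``it remains to apply this formula to $f(x)=v(ze^x)$ at $x=0$'' without spelling out the cancellation or the integrability check.
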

\begin{proof}
This is a simple corollary of the backward Kolmogorov equation for the process $X(\delta),$ cf. \cite{Bertoin}, Section I.2.
It is easy to see by following the Fourier-integral type of argument given in \cite{Bertoin} that the backward Kolmogorov operator
associated with $X(\delta),$ 
\begin{equation}
(\mathcal{L}f)(x) \triangleq \frac{\partial }{\partial\delta}\Big\vert_{\delta=0} {\bf
E}\Bigl[f\bigl(X(\delta)+x\bigr)\Bigr],
\end{equation}
is
\begin{align}
(\mathcal{L}f)(x) = & -\frac{\sigma^2}{2}\frac{d}{dx}f(x) + \frac{\sigma^2}{2} \frac{d^2}{dx^2}f(x) + \nonumber \\ & +
\int_{\mathbb{R}\setminus \{0\}} \Bigl[ f(x+u)-f(x)-\frac{d}{dx}f(x) (e^u-1)\Bigr] d\mathcal{M}(u).
\end{align} 
It remains to apply this formula to $f(x)=v(ze^x)$ at $x=0.$ 
\end{proof}
Now, recall definitions of $d(m)$ in Eq. \eqref{d}, of the process $\omega_{\mu,L,\varepsilon}(s)$ in Eq. \eqref{omegaLprocess},
 and of $\rho_{L,\varepsilon}(u)$ in Eq. \eqref{rhoL}.
\begin{lemma}[Combinatorial property]\label{BLemma}
Let $\mathfrak{f}(\delta,s)$ be an arbitrary continuous function
that vanishes as $\delta\rightarrow 0.$ Let
$\mathcal{B}(s)\triangleq
e^{\mathfrak{f}(\delta,s)+\omega_{\delta,L,\varepsilon}(s)}-1.$
Then, given any distinct $0<s_1<\cdots< s_n<1,$ $n\geq 2,$ as
$\delta\rightarrow 0,$
\begin{align}
{\bf E} \bigl[\mathcal{B}(s_1)\mathcal{B}(s_2)\bigr] = &
\bigl(e^{\mathfrak{f}(\delta,s_1)}-1\bigr)\bigl(e^{\mathfrak{f}(\delta,s_2)}-1\bigr)+ \nonumber \\ & +
\delta \rho_{L,\varepsilon}(s_2-s_1) \Bigl(\sigma^2 +  \!\!\!\!\!\!\int\limits_{\mathbb{R}\setminus \{0\}} \!\! (e^u-1)^2 d\mathcal{M}(u)\Bigr)+
o(\delta), \label{B2ID1}\\
{\bf E} \bigl[\mathcal{B}(s_1)\cdots\mathcal{B}(s_n)\bigr] = &
\bigl(e^{\mathfrak{f}(\delta,s_1)}-1\bigr)\cdots\bigl(e^{\mathfrak{f}(\delta,s_n)}-1\bigr)+ \nonumber \\ & +
\delta \rho_{L,\varepsilon}(s_n-s_1) \int\limits_{\mathbb{R}\setminus \{0\}}  (e^u-1)^n d\mathcal{M}(u)+
o(\delta),\label{B2ID2}
\end{align}
if $n>2.$
\end{lemma}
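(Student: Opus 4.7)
The plan is to decompose $\mathcal{B}(s_j) = A_j + e^{\mathfrak{f}(\delta,s_j)}\tilde{\mathcal{B}}(s_j)$, where $A_j = e^{\mathfrak{f}(\delta,s_j)} - 1 = o(1)$ and $\tilde{\mathcal{B}}(s_j) = e^{\omega_{\delta,L,\varepsilon}(s_j)} - 1$, then expand $\prod_j \mathcal{B}(s_j)$ over subsets $S \subseteq \{1,\dots,n\}$ and take expectations term by term. Three observations would eliminate most terms. First, the normalization $\phi(-i)=0$ from Eq. \eqref{phi} gives ${\bf E}[e^{\omega_{\delta,L,\varepsilon}(s)}]=1$ and hence ${\bf E}[\tilde{\mathcal{B}}(s_j)]=0$, so every singleton $S$ drops out. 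Second, $S=\emptyset$ contributes exactly $\prod_j(e^{\mathfrak{f}(\delta,s_j)}-1)$, matching the first term on the right-hand sides of Eqs. \eqref{B2ID1}--\eqref{B2ID2}. Third, for any $S$ with $2 \leq |S| < n$ the accompanying coefficient $\prod_{j\notin S} A_j$ is a nonempty product of $o(1)$ factors, so once ${\bf E}[\prod_{j\in S}\tilde{\mathcal{B}}(s_j)]$ is known to be $O(\delta)$, these contributions are absorbed into $o(\delta)$. Only $S=\{1,\dots,n\}$ then contributes at order $\delta$, with $\prod_{j\in S} e^{\mathfrak{f}(\delta,s_j)} \to 1$.

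I would then compute the leading order of ${\bf E}[\prod_{j=1}^n \tilde{\mathcal{B}}(s_j)]$ from Lemma \ref{main} with all $q_j=-i$: for $T=\{i_1<\cdots<i_{|T|}\}$,
\begin{equation*}
{\bf E}\Bigl[\prod_{j\in T} e^{\omega_{\delta,L,\varepsilon}(s_j)}\Bigr] = \exp\bigl(\delta K(T)\bigr), \quad K(T)=\sum_{1\leq a<b\leq |T|} d(b-a)\,\rho_{L,\varepsilon}(s_{i_b}-s_{i_a}),
\end{equation*}
because the diagonal coefficients collapse via $\phi(-i)=0$, while the off-diagonal coefficient equals the second discrete difference $\phi(-i(m+1))-2\phi(-im)+\phi(-i(m-1))$, which a direct computation from \eqref{phi} identifies with $d(m)$ of Eq. \eqref{d}. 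Inclusion-exclusion and the identity $\sum_{T\subseteq \{1,\dots,n\}}(-1)^{n-|T|}=0$ would then yield
\begin{equation*}
{\bf E}\Bigl[\prod_{j=1}^n \tilde{\mathcal{B}}(s_j)\Bigr] = \delta \sum_{T\subseteq \{1,\dots,n\}} (-1)^{n-|T|} K(T) + O(\delta^2).
\end{equation*}

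I would collapse this combinatorial sum by regrouping contributions according to each pair $(i,j)$, $1\leq i<j\leq n$, together with the number $k$ of elements of $T$ strictly between $i$ and $j$: the coefficient becomes $d(k+1)\,\rho_{L,\varepsilon}(s_j-s_i)$, while the $n-(j-i+1)$ indices outside $[i,j]$ are independently free and contribute a factor $(1-1)^{n-(j-i+1)}$ from their alternating signs. This factor vanishes unless $(i,j)=(1,n)$, so only the extremal pair survives. The remaining binomial sum $\sum_{k=0}^{n-2}\binom{n-2}{k}(-1)^{n-2-k}d(k+1)$ splits using $d(m)=\sigma^2+\int e^{(m-1)u}(e^u-1)^2 d\mathcal{M}(u)$: the $\sigma^2$ part reduces to $(1-1)^{n-2}\sigma^2$, equal to $\sigma^2$ when $n=2$ and $0$ otherwise, while the integral part telescopes to $\int (e^u-1)^2 (e^u-1)^{n-2}\,d\mathcal{M}(u)=\int(e^u-1)^n\,d\mathcal{M}(u)$, producing exactly the two cases \eqref{B2ID1} and \eqref{B2ID2}. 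The main obstacle is precisely this pair-by-pair collapse to the endpoint pair $(1,n)$, which depends essentially on $\phi(-i)=0$ killing the diagonals and on the identification of $d(m)$ as the second difference of $\phi(-im)$; the exceptional survival of $\sigma^2$ only for $n=2$ is the secondary delicate point and accounts for the structural asymmetry between the two cases.
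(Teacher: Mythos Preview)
Your proposal is correct and follows essentially the same route as the paper: expand the product, apply the moment identity ${\bf E}[\prod_{j\in T}e^{\omega_{\delta,L,\varepsilon}(s_j)}]=\exp(\delta K(T))$ coming from Lemma~\ref{main}, reduce to the alternating sum $\delta\sum_T(-1)^{n-|T|}K(T)$, show that only the extremal pair $(1,n)$ survives, and then evaluate the residual binomial sum in $d(m)$ using Eq.~\eqref{d}.

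The differences are organizational. You first split $\mathcal{B}(s_j)=A_j+e^{\mathfrak{f}}\tilde{\mathcal{B}}(s_j)$ to isolate the $\mathfrak{f}$–dependence cleanly; the paper instead expands $\prod(e^{\mathfrak{f}+\omega}-1)$ directly and absorbs the $e^{\mathfrak{f}}\to 1$ factors into the $o(\delta)$ when differentiating. More notably, for the combinatorial collapse the paper fixes a pair $(a,b)$ and a rank gap $l$, parametrizes $k$–tuples by the position of $a$, sums via the Vandermonde convolution to obtain $\binom{n-1-(b-a)}{k-1-l}\binom{b-a-1}{l-1}$, and then invokes the identity $\sum_k(-1)^k\binom{x}{k}=0$ to kill non-extremal pairs. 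Your parametrization by the number $k$ of elements of $T$ strictly between $i$ and $j$ together with a free choice of elements outside $[i,j]$ gives the vanishing factor $(1-1)^{n-(j-i+1)}$ in one stroke, bypassing Vandermonde entirely. Both arguments are equivalent; yours is a somewhat more direct packaging of the same cancellation.
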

This lemma generalizes the corresponding result for the gaussian multiplicative chaos measure that we first
noted in \cite{Me2}.
\begin{proof}
We need the following estimate first,
\begin{align}
{\bf E} \bigl[\mathcal{B}(s_1)\cdots\mathcal{B}(s_n)\bigr] = &
\bigl(e^{\mathfrak{f}(\delta,s_1)}-1\bigr)\cdots\bigl(e^{\mathfrak{f}(\delta,s_n)}-1\bigr)+\nonumber \\ & +\delta
\rho_{L,\varepsilon}(s_n-s_1)\sum\limits_{k=0}^{n-2} (-1)^k\,\binom{n-2}{k}\,d_{n-k-1}  + 
o(\delta).  \label{B2ID}
\end{align}
The proof is essentially based on the identity that follows from Eq. \eqref{maincharL},
\begin{equation}\label{interm}
\exp\Bigl(\mu\sum_{k<p}^n d(p-k) \, \rho_{L,\varepsilon}(s_p- s_k) \Bigr) =  {\bf E}\Bigl[
e^{ \omega_{\mu,L,\varepsilon}(s_1)+\cdots+\omega_{\mu,L,\varepsilon}(s_n)}\Bigr] 
\end{equation}
for any $0<s_1<\cdots<s_n<1.$ %%which is a simple corollary of Lemma \ref{main}.
One then multiplies out the terms on the left-hand side of Eq. \eqref{B2ID}, applies this identity to each resulting
term, and differentiates the result with respect to $\delta.$ To carry out this calculation in detail, we need to introduce the following notation.
Fix $n\geq 2$ and let $(p_k<\cdots < p_k)$ denote a $k-$tuple of numbers from $\{1,\,\cdots, n\}.$ Then, we have the obvious identity
that follows from Eq. \eqref{interm} and the vanishing of $\mathfrak{f}(\delta, s)$ as $\delta\rightarrow 0,$ 
\begin{align}
{\bf E} \bigl[\mathcal{B}(s_1)\cdots\mathcal{B}(s_n)\bigr] = & \prod\limits_{i=1}^n\bigl(e^{\mathfrak{f}(\delta,s_i)}-1\bigr)+\nonumber \\   + & \delta
\sum\limits_{k=2}^{n} (-1)^{n-k} \!\!\!\!\sum\limits_{(p_1<\cdots< p_k)} \sum\limits_{i<j}^k d(j-i) \rho_{L,\varepsilon}(s_{p_j}-s_{p_i}) +
o(\delta).
\end{align}
Now, consider the coefficient of $\rho_{L,\varepsilon}(s_n-s_1)$ first. This means $p_1=1$ and $p_k=n$ so that there
are 
\begin{equation}
\binom{n-2}{k-2}
\end{equation}
such tuples. Hence, the coefficient of $\rho_{L,\varepsilon}(s_n-s_1)$ is
\begin{equation}
\delta
\sum\limits_{k=2}^{n} (-1)^{n-k}  \binom{n-2}{k-2} d(k-1),
\end{equation}
which coincides with the expression on the right-hand side of Eq. \eqref{B2ID} by a change of summation index.  Next, consider the coefficient of
$d(j-i) \,\rho_{L,\varepsilon}(s_{p_j}-s_{p_i})$ in general. Let $p_i=a,$ $p_j=b$ and $l=j-i$ be fixed. Given, $a<b$ and $l,$ we wish to show that
the number of $k-$tuples that have the property that they contain $a$ and $b,$ \emph{i.e.} $a=p_i$ and $b=p_j$ for some $i<j,$ and $j-i=l,$ is
\begin{equation}\label{interm2}
\binom{n-1-(b-a)}{k-1-l}\binom{b-a-1}{l-1}.
\end{equation}
For example, let $n=7,$ $k=4,$ $a=2,$ $b=5,$ and $l=2.$ The formula says that there are six such tuples. %% 3 choose 3-l times  2 choose l-1 
In fact, they are: (2, 3, 5, 6), (2, 3, 5, 7), (2, 4, 5, 6), (2, 4, 5, 7), (1, 2, 3, 5), (1, 2, 4, 5). 
To prove the formula, let $i$ be the location of $a$ so the location of $b$ is then necessarily $j=i+l.$
The sought number of tuples is then
\begin{equation}
\sum\limits_{i} \binom{a-1}{i-1}\binom{b-a-1}{l-1}\binom{n-b}{k-j}.
\end{equation}
This formula simply gives us numbers of choices for the elements of the tuple preceding $a,$ in between $a$ and $b,$ and following $b.$
The formula in Eq. \eqref{interm2} now follows by the Vandermonde convolution, cf. Eq. (3.1) in \cite{G}. 
It follows that the coefficient of $d(l) \rho_{L,\varepsilon}(s_{b}-s_{a})$ is
\begin{equation}
\delta
\sum\limits_{k=2}^{n} (-1)^{n-k}  \binom{n-1-(b-a)}{k-1-l}\binom{b-a-1}{l-1}.
\end{equation}
It remains to observe that this sum is identically zero provided $n-1>b-a,$ which is a corollary of the classical binomial identity
\begin{equation}
\sum\limits_{k\geq 0} (-1)^k \binom{x}{k}=0, \,\, x>0,
\end{equation}
cf. Eq. (1.2) in \cite{G}. The case of $n-1=b-a$ was already treated above, hence Eq. \eqref{B2ID} is verified.
Finally, 
%%It remains to observe multiple cancellations so that the only remaining term to the first order in $\delta$ is
%%as specified on the right-hand side. 
the alternating sum on the right-hand side of Eq. \eqref{B2ID} can be simplified 
using the definition of $d(m)$ in Eq. \eqref{d} resulting in Eqs. \eqref{B2ID1} and \eqref{B2ID2}.
\end{proof}
\begin{lemma}[Differentiation]\label{lemmadiffID}
Let $F(x)$ be a smooth test function. Let
\begin{equation} u_\varepsilon(z, \,\mu, \,F) \triangleq
F\Bigl(z\int_0^1 e^{\omega_{\mu, L, \varepsilon}(s)}\,ds\Bigr).
\end{equation}
Then, there holds the following identity
\begin{gather}
\frac{\partial}{\partial\mu} u_\varepsilon(z, \,\mu, \,F) =
-\lim\limits_{\delta\rightarrow 0} \Bigl[\frac{1}{\delta} \sum_{k=1}^\infty
\frac{ u_\varepsilon(z,
\,\mu,\, F^{(k)})}{k!}\Bigl(\!z\!\!\int\limits_0^1 e^{\omega_{\mu, L, \varepsilon}(s)}
\bigl(e^{\mathcal{A}_\varepsilon(s)}-1\bigr)\,ds\Bigr)^k\Bigr],
\end{gather}
where
\begin{equation}\label{intermeq5ID}
\mathcal{A}_\varepsilon(s) \triangleq
\omega_{\mu-\delta,L,\varepsilon}(s)-\omega_{\mu,L,\varepsilon}(s).
\end{equation}
\end{lemma}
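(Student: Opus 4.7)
The plan is to prove Lemma \ref{lemmadiffID} by a direct Taylor-series argument applied to the difference quotient defining the $\mu$-derivative, with $\mathcal{A}_\varepsilon(s)$ as the natural increment obtained from the canonical coupling in Eq. \eqref{intermeq5ID}. Concretely, I would first write the one-sided difference quotient
\begin{equation*}
\frac{\partial}{\partial \mu} u_\varepsilon(z,\mu,F) \;=\; \lim_{\delta\to 0}\frac{u_\varepsilon(z,\mu,F)-u_\varepsilon(z,\mu-\delta,F)}{\delta}
\end{equation*}
and use \eqref{intermeq5ID} to substitute $\omega_{\mu-\delta,L,\varepsilon}(s)=\omega_{\mu,L,\varepsilon}(s)+\mathcal{A}_\varepsilon(s)$ inside the exponential, so that
\begin{equation*}
u_\varepsilon(z,\mu-\delta,F) \;=\; F\Bigl(z\!\int_0^1 e^{\omega_{\mu,L,\varepsilon}(s)}\,ds \;+\; z\!\int_0^1 e^{\omega_{\mu,L,\varepsilon}(s)}\bigl(e^{\mathcal{A}_\varepsilon(s)}-1\bigr)\,ds\Bigr).
\end{equation*}

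Next I would Taylor-expand $F$ around the base point $z\int_0^1 e^{\omega_{\mu,L,\varepsilon}(s)}\,ds$, treating the second integral as the increment. Because $\mathcal{A}_\varepsilon(s)\to 0$ (in the coupled realization provided by the proof of Theorem \ref{IntInv}, via $\mathcal{A}_\varepsilon(s)=X(\delta)-\bar{\omega}_{\delta,eL,\varepsilon}(s)$), the increment is small as $\delta\to 0$ and the full Taylor series converges formally. The $k=0$ term is exactly $u_\varepsilon(z,\mu,F)$, and for $k\ge 1$ the $k$-th Taylor coefficient is $\frac{1}{k!}F^{(k)}\bigl(z\int_0^1 e^{\omega_{\mu,L,\varepsilon}(s)}\,ds\bigr) = \frac{1}{k!}u_\varepsilon(z,\mu,F^{(k)})$ by definition. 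Substituting this into the difference quotient, the $k=0$ contribution cancels and one gets
\begin{equation*}
\frac{u_\varepsilon(z,\mu,F)-u_\varepsilon(z,\mu-\delta,F)}{\delta} \;=\; -\frac{1}{\delta}\sum_{k=1}^\infty \frac{u_\varepsilon(z,\mu,F^{(k)})}{k!}\Bigl(z\!\int_0^1 e^{\omega_{\mu,L,\varepsilon}(s)}\bigl(e^{\mathcal{A}_\varepsilon(s)}-1\bigr)\,ds\Bigr)^k,
\end{equation*}
which reproduces the claimed identity after passing to the limit $\delta\to 0$.

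The main obstacle here is not algebraic but analytic: the identity is formal in nature because one is manipulating an infinite Taylor series whose convergence and the legitimacy of exchanging the series with the $\delta\to 0$ limit are not addressed. This is consistent with the author's remark that the derivation is exact in the sense of equality of formal power series but not fully rigorous; the eventual rigorous content appears only after pairing Lemma \ref{lemmadiffID} with the combinatorial estimate of Lemma \ref{BLemma} (which controls $\delta^{-1}{\bf E}[\prod_j(e^{\mathcal{A}_\varepsilon(s_j)}-1+\cdots)]$ term-by-term) and the Kolmogorov identity of Lemma \ref{KolmogorovID} (which supplies the L\'evy--Khinchine data generating $\sigma^2$ and $d\mathcal{M}(u)$). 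Thus for this specific lemma I would keep the argument at the formal Taylor-expansion level, simply noting that $\mathcal{A}_\varepsilon(s)\to 0$ as $\delta\to 0$ so that each truncated partial sum is a bona fide Taylor remainder expansion, and defer all convergence questions to the subsequent step where the expectation ${\bf E}[\cdot]$ is taken and the estimates of Lemma \ref{BLemma} become available.
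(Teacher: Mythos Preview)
Your proposal is correct and follows essentially the same approach as the paper: write $\int_0^1 e^{\omega_{\mu-\delta,L,\varepsilon}(s)}\,ds$ as the base integral plus the ``small'' increment $\int_0^1 e^{\omega_{\mu,L,\varepsilon}(s)}(e^{\mathcal{A}_\varepsilon(s)}-1)\,ds$, then Taylor-expand $F$ in that increment and read off the difference quotient. The paper's proof is a two-line sketch of exactly this; your version simply spells out the difference quotient and the cancellation of the $k=0$ term explicitly, and your remarks on the formal nature of the expansion are consistent with the paper's own caveats.
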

\begin{proof}
The result follows from writing
\begin{align}
\int_0^1 e^{\omega_{\mu-\delta, L, \varepsilon}(s)}\,ds = & 
\int_0^1 e^{\omega_{\mu, L, \varepsilon}(s)}\,ds   +
\int_0^1 e^{\omega_{\mu, L, \varepsilon}(s)}\,\bigl( e^{\mathcal{A}_\varepsilon(s)}-1 \bigr)ds,
\end{align}
and Taylor expanding in the ``small" parameter \[\int_0^1 e^{\omega_{\mu, L, \varepsilon}(s)}
\bigl(e^{\mathcal{A}_\varepsilon(s)}-1\bigr) ds\] that vanishes as
$\delta\rightarrow 0.$ 
\end{proof}
We can now give a derivation of Theorem \ref{theoremIDdiff}.
\begin{proof} 
The idea of the derivation is to consider a stochastic flow and derive the corresponding
Feynman-Kac equation regarding intermittency as time.
Let 
\begin{equation}
u_\varepsilon(z,\,\mu,\,F) \triangleq F\Bigl(z\int_0^1
e^{\omega_{\mu,1,\varepsilon}(s)}ds\Bigr)
\end{equation}
and let
$v_\varepsilon(z,\,\mu,\,F)$ be its expectation,
\begin{equation}
v_\varepsilon(z,\,\mu,\,F) \triangleq {\bf
E}\bigl[u_\varepsilon(z,\,\mu,\,F)\bigr].
\end{equation}
%%so that $v(\mu,\,f,\,F)=\lim\limits_{\varepsilon\rightarrow 0}v_\varepsilon(1,\,\mu,\, f,\,F).$ 
The starting point is the limit
\begin{equation}\label{thelim}
A \triangleq \frac{\partial}{\partial\delta}\Big\vert_{\delta=0}\,\,
{\bf{E}^*} \left[v_\varepsilon \left(z
e^{X(\delta)},\,\mu,\,F\right)\right],
\end{equation}
where $X(\delta)$ is defined by Eq. \eqref{Xdelta} and is independent of $\omega_{\mu,1,\varepsilon}(s),$ 
and the star is used to distinguish the expectation with respect to $X(\delta)$ from that with respect to
$\omega_{\mu,1,\varepsilon}(s).$
By Lemma \ref{KolmogorovID}, we have
\begin{align}
A = & \frac{\sigma^2}{2}z^2\frac{\partial^2}{\partial z^2} v_\varepsilon(z,\,\mu,\,F) + \nonumber \\ &  +
\int_{\mathbb{R}\setminus \{0\}} \Bigl[v_\varepsilon(ze^u,\,\mu,\,F) -v_\varepsilon(z,\,\mu,\,F)-z \frac{\partial}{\partial z}v_\varepsilon(z,\,\mu,\,F) (e^u-1)\Bigr] d\mathcal{M}(u). \label{firsteq}
\end{align}

On the other hand, this limit can be computed in a different way.
By Theorem \ref{IntInv}, there holds the following equality in law 
\begin{equation}
e^{X(\delta)} \int_0^1 e^{\omega_{\mu, 1,
\varepsilon}(s)} ds =\int_0^1 e^{\omega_{\mu-\delta,1,\varepsilon}(s)+{\bar
\omega}_{\delta,e,\varepsilon}(s)} ds.
\end{equation}
Thus, to compute the limit in Eq. \eqref{thelim}, we need to expand
\begin{equation}\label{intermeq4}
{\bf E}^*\left[{\bf E}\Bigl[F\Bigl(z \int_0^1
e^{\omega_{\mu-\delta,1,\varepsilon}(s)+{\bar
\omega}_{\delta,e,\varepsilon}(s)} ds\Bigr)\Bigr]\right] -
v_\varepsilon(z,\,\mu,\,F)
\end{equation}
in $\delta$ up to $o(\delta)$ terms. The star now indicates the expectation with respect to
${\bar
\omega}_{\delta,e,\varepsilon}(s),$ which is independent of $\omega_{\mu-\delta,1,\varepsilon}(s)$ by construction. 
Let $\mathcal{A}_\varepsilon(s)
\triangleq
\omega_{\mu-\delta,1,\varepsilon}(s)-\omega_{\mu,1,\varepsilon}(s)$
%%$\mathcal{A}_\varepsilon(s)$ be
as in Eq. \eqref{intermeq5ID} with $L=1$ and
\begin{equation}
%%\mathcal{A}(s) \triangleq
%%\omega_{\mu-\delta,L,\varepsilon}(s)-\omega_{\mu,L,\varepsilon}(s),\\
\bar{\mathcal{A}}_\varepsilon(s)\triangleq {\bar
\omega}_{\delta,e,\varepsilon}(s).
\end{equation}
While we do not know how to expand either
$\mathcal{A}_\varepsilon(s)$ or $\bar{\mathcal{A}}_\varepsilon(s)$
in $\delta,$ they both clearly vanish as $\delta\rightarrow 0.$ It
follows that the expression in Eq. \eqref{intermeq4} can be expanded
in the ``small'' quantity
\begin{equation}\label{Cquantity}
\mathcal{C}\triangleq \int_0^1 e^{\omega_{\mu,1,\varepsilon}(s)}
\bigl(e^{\mathcal{A}_\varepsilon(s)+\bar{\mathcal{A}}_\varepsilon(s)}-1\bigr)\,ds.  
\end{equation}
\begin{align}
{\bf E}^*\!\!\left[{\bf E}\Bigl[F\Bigl(z \!\!\int_0^1
e^{\omega_{\mu-\delta,1,\varepsilon}(s)+{\bar
\omega}_{\delta,e,\varepsilon}(s)} ds\Bigr)\Bigr]\right] 
=  & {\bf E}^*\!\!\left[{\bf E}\Bigl[F\Bigl(z \!\!\int_0^1
e^{\omega_{\mu,1,\varepsilon}(s)}  ds  + z \mathcal{C}\Bigr)\Bigr]\right], \nonumber \\
= & {\bf E}^*\!\!\left[{\bf E}\Bigl[\sum_{k=0}^\infty
\frac{z^k}{k!} u_\varepsilon(z,\,\mu,\,F^{(k)})\,\mathcal{C}^k\Bigr]\right]. \label{intermeq3}
\end{align}
The advantage of this representation is that the only
$\bar{\omega}_\varepsilon$ dependence is in
$\bar{\mathcal{A}}_\varepsilon(s).$ This allows us to compute the
$\bf{E}^*$ expectation in Eq. \eqref{intermeq4}. Indeed, Eq. \eqref{intermeq4} entails two
expectations: the $\bf{E}$ with respect to $\omega_\varepsilon$
process inherited from the definition of $v_\varepsilon(z,\mu,F)$
and the $\bf{E}^*$ expectation with respect to
$\bar{\omega}_\varepsilon$ process. Interchanging their order, it
follows from Eq. \eqref{intermeq3} that computing the $\bf{E}^*$ expectation is
now reduced to computing ${\bf E}^*[\mathcal{C}^k].$ As
$\mathcal{A}_\varepsilon(s)$ and $\bar{\mathcal{A}}_\varepsilon(s)$
are independent processes, it follows from Lemma \ref{BLemma} applied to
$\mathcal{B}(s)=\exp\bigl(\mathcal{A}_\varepsilon(s)+\bar{\mathcal{A}}_\varepsilon(s)\bigr)-1$
with $\mathfrak{f}(\delta, s)\triangleq \mathcal{A}_\varepsilon(s)$
that the $\bf{E}^*$ expectation equals
\begin{align}
{\bf E}^*[\mathcal{B}(s_1)\mathcal{B}(s_2)]= &
\bigl(e^{\mathcal{A}_\varepsilon(s_1)}-1\bigr)\bigl(e^{\mathcal{A}_\varepsilon(s_2)}-1\bigr)+ \nonumber \\ & +
\delta \,\rho_{e,\varepsilon}(s_2-s_1) \Bigl(\sigma^2 +  \!\!\!\!\!\!\int\limits_{\mathbb{R}\setminus \{0\}} \!\! (e^u-1)^2 d\mathcal{M}(u)\Bigr)+o(\delta), \\
{\bf E}^*[\mathcal{B}(s_1)\cdots\mathcal{B}(s_k)]= & 
\bigl(e^{\mathcal{A}_\varepsilon(s_1)}-1\bigr)\cdots\bigl(e^{\mathcal{A}_\varepsilon(s_k)}-1\bigr)+\nonumber \\&  +
\delta \rho_{e,\varepsilon}(s_k-s_1) \int\limits_{\mathbb{R}\setminus \{0\}}  (e^u-1)^k d\mathcal{M}(u) +
o(\delta),
\end{align}
if $k>2.$
Collecting what we have shown so far, we obtain
\begin{gather}
A=
\lim\limits_{\delta\rightarrow 0}\frac{1}{\delta}\sum_{k=1}^\infty
\frac{z^k}{k!}{\bf E}\Bigl[u_\varepsilon(z, \,\mu,\,F^{(k)})
\Bigl(\int_{[0,1]} e^{\omega_{\mu,1,\varepsilon}(s)}
\bigl(e^{\mathcal{A}_\varepsilon(s)}-1\bigr)\,ds\Bigr)^k\Bigr]+ \nonumber\\
+\sigma^2 z^2 \int\limits_{\{s_1<s_2\}} {\bf E}\Bigl[u_\varepsilon(z, \,\mu,\,F^{(2)}) e^{\omega_{\mu,1,\varepsilon}(s_1)+\omega_{\mu,1,\varepsilon}(s_2)}\Bigr]
\,\rho_{e,\varepsilon}(s_2-s_1)\,ds^{(2)}+ \nonumber \\
+\sum\limits_{k=2}^\infty z^k \int\limits_{\mathbb{R}\setminus \{0\}} 
(e^u-1)^k\,d\mathcal{M}(u) \int\limits_{\{s_1<\cdots<s_k\}}
{\bf E}\Bigl[u_\varepsilon(z, \,\mu,\,F^{(k)})\times \nonumber \\ \times
e^{ \omega_{\mu,1,\varepsilon}(s_1)+\cdots+\omega_{\mu,1,\varepsilon}(s_k)}\Bigr] 
\rho_{e,\varepsilon}(s_k-s_1)\,ds^{(k)}
.\label{intermeq1}
\end{gather}
By Lemma \ref{lemmadiffID}, the $\delta\rightarrow 0$ limit that is involved in Eq. 
\eqref{intermeq1} equals
\begin{equation}\label{intermeq2}
-\frac{\partial}{\partial\mu} v_\varepsilon(z, \,\mu,\,F).
\end{equation}
Observing that
$\rho_{e,\varepsilon}(s_2-s_1)=1+\rho_{\varepsilon}(s_2-s_1)$ and
\begin{gather}
\int\limits_{\{s_1<s_2\}} {\bf E}\Bigl[u_\varepsilon(z, \,\mu,\,F^{(2)}) e^{\omega_{\mu,1,\varepsilon}(s_1)+\omega_{\mu,1,\varepsilon}(s_2)}\Bigr] \,ds^{(2)} = \frac{1}{2}\frac{\partial^2}{\partial z^2}
v_\varepsilon(z,\mu,\,F), \\
 \sum\limits_{k=2}^\infty z^k \!\!\!\!\!\int\limits_{\mathbb{R}\setminus \{0\}} \!\!
(e^u-1)^k\,d\mathcal{M}(u) \!\!\!\!\!\!\!\!\!\!\!\int\limits_{\{s_1<\cdots<s_k\}} \!\!\!\!\!\!\!\!\!\!\!
{\bf E}\Bigl[u_\varepsilon(z, \,\mu,\,F^{(k)})
e^{ \omega_{\mu,1,\varepsilon}(s_1)+\cdots+\omega_{\mu,1,\varepsilon}(s_k)}\Bigr] 
\,ds^{(k)}= \nonumber \\
=\int_{\mathbb{R}\setminus \{0\}} \Bigl[v_\varepsilon(ze^u,\,\mu,\,F) -v_\varepsilon(z,\,\mu,\,F)-z \frac{\partial}{\partial z}v_\varepsilon(z,\,\mu,\,F) (e^u-1)\Bigr] d\mathcal{M}(u),
\end{gather}
and substituting 
these equations into Eq. \eqref{intermeq1}, we obtain
\begin{gather}
A= -\frac{\partial}{\partial\mu} v_\varepsilon(z, \,\mu,\,F) +
\frac{\sigma^2 z^2}{2}\frac{\partial^2}{\partial z^2}
v_\varepsilon(z,\mu,F)+\nonumber\\
+ \int_{\mathbb{R}\setminus \{0\}} \Bigl[v_\varepsilon(ze^u,\,\mu,\,F) -v_\varepsilon(z,\,\mu,\,F)-z \frac{\partial}{\partial z}v_\varepsilon(z,\,\mu,\,F) (e^u-1)\Bigr] d\mathcal{M}(u)+\nonumber \\
+ \sigma^2 z^2 \int\limits_{\{s_1<s_2\}} {\bf E}\Bigl[u_\varepsilon(z, \,\mu,\,F^{(2)})
e^{\omega_{\mu,1,\varepsilon}(s_1)+\omega_{\mu,1,\varepsilon}(s_2)}\Bigr]
\,\rho_{\varepsilon}(s_2-s_1)ds^{(2)}+ \nonumber \\ 
+\sum\limits_{k=2}^\infty z^k \int\limits_{\mathbb{R}\setminus \{0\}} 
(e^u-1)^k\,d\mathcal{M}(u) \int\limits_{\{s_1<\cdots<s_k\}}
{\bf E}\Bigl[u_\varepsilon(z, \,\mu,\,F^{(k)})\times \nonumber \\ \times
e^{ \omega_{\mu,1,\varepsilon}(s_1)+\cdots+\omega_{\mu,1,\varepsilon}(s_k)}\Bigr] 
\rho_{\varepsilon}(s_k-s_1)\,ds^{(k)}.\label{lasteq}
\end{gather}
Finally, upon 
comparing this expression for $A$ with that in Eq. \eqref{firsteq},
and then letting $z=1$ and $\varepsilon\rightarrow 0,$ we arrive at Eq. \eqref{theIDrule}. 
\end{proof}

\begin{proof}[Proof of Theorem \ref{theoremIDdiffTwo}]
The proof is very similar to that of Theorem \ref{theoremIDdiff} so it is sufficient to point out
that instead of the limit in Eq. \eqref{thelim} one needs to evaluate the more general limit
\begin{equation}
A \triangleq \frac{\partial}{\partial\delta}\Big\vert_{\delta=0}\,\,
{\bf{E}} \Bigl[F_1\Bigl(ze^{X(\delta)}\int_{I_1} e^{\omega_{\mu,1,\varepsilon}(s)}ds\Bigr) \,
F_2\Bigl(ze^{X(\delta)}\int_{I_2} e^{\omega_{\mu,1,\varepsilon}(s)}ds\Bigr) 
\Bigr],
\end{equation}
The remaining details are essentially the same and will be omitted. 
\end{proof}

\begin{proof}[Proof of Corollary \ref{covID}]
It is obvious that the covariance in Eq. \eqref{covstrucID} is linear in $\mu$ in the limit $\tau\rightarrow 0.$
The slope can be computed by Corollary \ref{TwoIntervalsZeroInter}. 
%%The argument is based on Corollary \ref{TwoIntervalsZeroInter}. 
Let 
\begin{equation}
F_1(x)=F_2(x)=\log x.
\end{equation}
The intervals $I_1$ and $I_2$ are
\begin{equation}
I_1=[0,\,\tau],\;\,I_2=[t,\,t+\tau],
\end{equation}
and the expansion is around the point $x=|I_1|=|I_2|=\tau$ 
so that the necessary derivative is
\begin{equation}
F^{(k)}(\tau) = (-1)^{k-1} (k-1)! \tau^{-k}.
\end{equation}
Using the standard formula for the covariance of two random variables
\begin{equation}
{\bf Cov}\bigl(A, \,B\bigr) = {\bf E} \bigl[A\,B\bigr] - {\bf E} \bigl[A\bigr] {\bf E} \bigl[B\bigr],
\end{equation}
and applying Corollary \ref{TwoIntervalsZeroInter} %%in the limit $\tau\rightarrow 0$ 
it is not difficult to see that $\log\tau$ terms cancel
out and the remaining terms are
\begin{gather}
\frac{\partial}{\partial \mu} \Big\vert_{\mu=0} {\bf Cov}\Bigl(\log\int\limits_t^{t+\tau}M_\mu(ds), \,\log\int\limits_0^{\tau} M_\mu(ds)\Bigr) =  %%\mu \Biggl[
\frac{\sigma^2 }{\tau^2}\!\!\int\limits_{\{s_1\in I_1,\, s_2\in I_2\}}\!\!\!
g(s_1,\,s_2)\,ds^{(2)} + \nonumber \\ + 
\sum\limits_{k,l\geq 1}^\infty  (-1)^{k+l} (k-1)! (l-1)! \tau^{-(k+l)}
 \int\limits_{\mathbb{R}\setminus \{0\}} 
(e^u-1)^{k+l}\,d\mathcal{M}(u) \times \nonumber \\   \times \int\limits_{\substack{\{s_1<\cdots<s_k\}\subset I_1 \\ \{s_{k+1}<\cdots<s_{k+l}\}\subset I_2 }} \!\!\!\!\!\!\!\!\!\!\!\!
g(s_1,\,s_{k+l})\,ds^{(k+l)}. %%\Biggr] +O(\tau).
\end{gather}
Finally, in the limit $\tau\rightarrow 0$ the integrals can be obviously approximated by 
\begin{equation}
\int\limits_{\substack{\{s_1<\cdots<s_k\}\subset I_1 \\ \{s_{k+1}<\cdots<s_{k+l}\}\subset I_2 }} \!\!\!\!\!\!\!\!\!\!\!\!
g(s_1,\,s_{k+l})\,ds^{(k+l)} = \frac{\tau^{k+l}}{k! l!}\, g(t) +O (\tau),
\end{equation}
and the result follows from the power series expansion of the logarithm function.
\end{proof}

\begin{proof} [Proof of Theorem \ref{intidentity}]
We first need to simplify the sum in Eq. \eqref{derivsinglemomformula}. The starting point is the identity
\begin{align}
\sum\limits_{k<p}^n d(p-k)\,g(t_p, t_k) = & \sigma^2\,\sum\limits_{k<p}^n g(t_p, t_k)  + \sum\limits_{k=2}^{n} \,\int\limits_{\mathbb{R}\setminus \{0\}} (e^u-1)^k\,d\mathcal{M}(u) \times \nonumber \\ & \times \Bigl[\sum\limits_{\substack{i<j\\j-i\geq k-1}}^n 
\binom{j-i-1}{k-2} g(t_i, t_j)\Bigr]. \label{dsumeq}
\end{align}
Its proof is a simple application of Eq. \eqref{d}, which implies
\begin{align}
\sum\limits_{k<p}^n d(p-k)\,g(t_p, t_k) = & \sigma^2\,\sum\limits_{k<p}^n g(t_p, t_k)  + \nonumber \\ & +
\int\limits_{\mathbb{R}\setminus \{0\}} (e^u-1)^2\,d\mathcal{M}(u)
\Bigl[\sum\limits_{k<p}^n e^{(p-k-1)u} \,g(t_k, t_p)\Bigr].
\end{align}
Now, using the identity 
\begin{equation}
e^{pu} = \sum\limits_{s=0}^p (e^u-1)^s \binom{p}{s},
\end{equation}
and changing the order of summation, we obtain
\begin{align}
\sum\limits_{k<p}^n d(p-k)\,g(t_p, t_k) = & \sigma^2\,\sum\limits_{k<p}^n g(t_p, t_k)  + 
\int\limits_{\mathbb{R}\setminus \{0\}} d\mathcal{M}(u) \times\ \nonumber \\  & \times
\Bigl[\sum\limits_{s=0}^{n-2} (e^{u}-1)^{s+2} \sum\limits_{\substack{k<p \\ p-k-1\geq s}}^n \binom{p-k-1}{s} \,g(t_k, t_p)\Bigr],
\end{align}
which is equivalent to Eq. \eqref{dsumeq}. 

We note next that the product in Eq. \eqref{derivsinglemomformula} satisfies
\begin{equation}
\prod\limits_{k<p}^n r(t_p-t_k)^{-\mu\,d(p-k)} = \lim\limits_{\varepsilon\rightarrow
0} {\bf E}\Bigl[
e^{ \omega_{\mu,\varepsilon}(t_1)+\cdots+\omega_{\mu,\varepsilon}(t_n)}\Bigr] 
\end{equation}
for any $0<t_1<\cdots<t_n<1,$ cf. Eq. \eqref{momrhoidentity}.
Then, upon substituting this equation into Eq. \eqref{derivsinglemomformula}, we see that to establish the equivalence of
Eqs. \eqref{derivsinglemomformula} and \eqref{theIDruleMom} it is sufficient to verify the
identity in Eq. \eqref{intidentityeq}. 

Before we give a formal proof of Eq. \eqref{intidentityeq}, we will treat the special case of $n-k=1$ to illustrate the main idea. 
By breaking up the integration region of the $ds$ integral into three subregions, we can write
\begin{gather}
\int_0^1 e^{\omega(s)} ds \!\!\!\!\!\!\!
\int\limits_{\{s_1<\cdots<s_k\}} \!\!\!
e^{ \omega(s_1)+\cdots+\omega(s_k)}
g(s_1,\,s_k)\,ds^{(k)}= \nonumber \\ =\int\limits_{\{s<s_1<\cdots<s_k\}} \!\!\!
e^{ \omega(s)+\omega(s_1)+\cdots+\omega(s_k)}
g(s_1,\,s_k)\,ds^{(k+1)} + \nonumber \\
+  \sum\limits_{i=1}^{k-1} \int\limits_{\{s_1<\cdots<s_i<s<s_{i+1}<\cdots<s_k\}} \!\!\!
e^{ \omega(s)+\omega(s_1)+\cdots+\omega(s_k)}
g(s_1,\,s_k)\,ds^{(k+1)}+ \nonumber \\
+ \int\limits_{\{s_1<\cdots<s_k<s\}} \!\!\!
e^{ \omega(s)+\omega(s_1)+\cdots+\omega(s_k)}
g(s_1,\,s_k)\,ds^{(k+1)}.
\end{gather}
We now relabel the variables of integration, resulting in the identity
\begin{gather}
\int_0^1 e^{\omega(s)} ds \!\!\!\!\!\!\!
\int\limits_{\{s_1<\cdots<s_k\}} \!\!\!
e^{ \omega(s_1)+\cdots+\omega(s_k)}
g(s_1,\,s_k)\,ds^{(k)}= \nonumber \\ =\int\limits_{\{s_1<\cdots<s_{k+1}\}} \!\!\!
e^{ \omega(s_1)+\cdots+\omega(s_{k+1})} \Bigl[
g(s_2,\,s_{k+1})  +\nonumber \\ +  (k-1)  g(s_1,\,s_{k+1}) + g(s_1,\,s_{k})\Bigr]ds^{(k+1)}, 
\end{gather}
which is the same as the expression on the right-hand side of Eq. \eqref{intidentityeq}. 
To prove Eq. \eqref{intidentityeq} in general, it is clear that one can apply the above procedure iteratively
resulting in an identity of the form
\begin{gather}
\frac{1}{(n-k)!}  \Bigl(\int_0^1 e^{\omega(s)} ds\Bigr)^{n-k}  \!\!\!\!\!\!\!
\int\limits_{\{s_1<\cdots<s_k\}} \!\!\!
e^{ \omega(s_1)+\cdots+\omega(s_k)}
g(s_1,\,s_k)\,ds^{(k)}= \nonumber \\ =\int\limits_{\{s_1<\cdots<s_n\}}  
e^{ \omega(s_1)+\cdots+\omega(s_n)}  \Bigl[\sum\limits_{i<j}^n 
C_{ijk} \,g(s_i, s_j)\Bigr] ds^{(n)}
\end{gather}
with some coefficients $C_{ijk}$ to be determined. Now, the left-hand side can be obviously written as
\begin{gather}
\frac{1}{(n-k)!}  \Bigl(\int_0^1 e^{\omega(s)} ds\Bigr)^{n-k}  \!\!\!\!\!\!\!
\int\limits_{\{s_1<\cdots<s_k\}} \!\!\!
e^{ \omega(s_1)+\cdots+\omega(s_k)}
g(s_1,\,s_k)\,ds^{(k)}  = \nonumber \\
\int\limits_{\{t_1<\cdots<t_{n-k}\}}  e^{ \omega(t_1)+\cdots+\omega(t_{n-k})} dt^{(n-k)}\!\!\!\!\!\!\!
\int\limits_{\{s_1<\cdots<s_k\}} \!\!\!
e^{ \omega(s_1)+\cdots+\omega(s_k)}
g(s_1,\,s_k)\,ds^{(k)}.
\end{gather}
Then, following the logic of the calculation in the case of $n-k=1$ above, the coefficient $C_{ijk}$ equals
the number of ways of inserting the $t's$ into $s_1<\cdots<s_k$ so that $s_1$ ends up in the position $i$ and
$s_k$ in the position $j$ after the insertion. This means that there should be $j-i-1$ variables in between $s_1$ 
and $s_k,$ of which there are $k-2$ $s_i$s and $j-i-1-(k-2)$ $t_i$s. Thus, $C_{ijk}$ equals the number of
ways of choosing $k-2$ locations out of $j-i-1$ available positions, \emph{i.e.}
\begin{equation}
C_{ijk} = \binom{j-i-1}{k-2}.
\end{equation}
This completes the proof. 
\end{proof}

\begin{proof}[Proof of Theorem \ref{IDMC}]
The details of the calculation are quite similar to those of the proof of Theorem \ref{theoremIDdiff} so we will only highlight
the key steps. By Theorem \ref{IntInv} we have the identity in law,
\begin{align}
F\Bigl(z e^{X(\delta)}\!\!\! \int_0^1 e^{\omega_{\mu, 1,
\varepsilon}(s)} ds\Bigr) \prod\limits_{i=1}^n e^{ \omega_{\mu, 1, \varepsilon}(t_i) + X(\delta)} &
=F\Bigl(z\!\!\int_0^1 e^{\omega_{\mu-\delta,1,\varepsilon}(s)+{\bar
\omega}_{\delta,e,\varepsilon}(s)} ds\Bigr) \times \nonumber \\ & \times \prod\limits_{i=1}^n e^{ \omega_{\mu-\delta,1,\varepsilon}(t_i)+{\bar
\omega}_{\delta,e,\varepsilon}(t_i) }.\label{zidentity}
\end{align}
Denote
\begin{equation}
M_{\mu, \varepsilon} = \int_0^1 e^{\omega_{\mu, 1,
\varepsilon}(s)} ds
\end{equation}
as before. Hence, we can compute the limit 
\begin{equation}
A\triangleq \frac{\partial}{\partial\delta}\Big\vert_{\delta=0}\,\,
{\bf{E}} \Bigl[ F\bigl(ze^{X(\delta)} \,M_{\mu, \varepsilon}\bigr)\bigl(ze^{X(\delta)}\bigr)^n\,e^{ \omega_{\mu,\varepsilon}(t_1)+\cdots+\omega_{\mu,\varepsilon}(t_n)}\Bigr]
\end{equation}
in two ways: by Lemma \ref{KolmogorovID} and by expanding the right-hand side in $\delta$ using Eq. \eqref{zidentity}.
Let $\mathcal{A}_\varepsilon(s),$ $\bar{\mathcal{A}}_\varepsilon(s),$ and $\mathcal{C}$ be as in the proof of Theorem \ref{theoremIDdiff}.
We can write
\begin{gather}
F\Bigl(z\!\!\int_0^1 e^{\omega_{\mu-\delta,1,\varepsilon}(s)+{\bar
\omega}_{\delta,e,\varepsilon}(s)} ds\Bigr) = \sum\limits_{l=0}^\infty \frac{z^l}{l!} F^{(l)}(z  M_{\mu, \varepsilon}) \,\mathcal{C}^l, \\
\prod\limits_{i=1}^n e^{ \omega_{\mu-\delta,1,\varepsilon}(t_i)+{\bar
\omega}_{\delta,e,\varepsilon}(t_i) } = \prod\limits_{i=1}^n e^{ \omega_{\mu,1,\varepsilon}(t_i) } 
\prod\limits_{i=1}^n \Bigl(1+ \bigl(e^{ \mathcal{A}_\varepsilon(t_i) + \bar{\mathcal{A}}_\varepsilon(t_i) }-1\bigr)\Bigr).
\end{gather}
As  $\mathcal{A}_\varepsilon(s),$ $\bar{\mathcal{A}}_\varepsilon(s),$ and $\mathcal{C}$ vanish in the limit $\delta\rightarrow 0,$ 
the right-hand side of Eq. \eqref{zidentity} can be expanded in the form
\begin{equation}
\sum\limits_{l=0}^\infty \frac{z^l}{l!} F^{(l)}(z  M_{\mu, \varepsilon}) \,\mathcal{C}^l \, \prod\limits_{i=1}^n e^{ \omega_{\mu,1,\varepsilon}(t_i) }  \, \sum\limits_{k=0}^n \sum\limits_{(p_1<\cdots< p_k)}\prod\limits_{j=1}^{k} 
\bigl(e^{ \mathcal{A}_\varepsilon(t_{p_j}) + \bar{\mathcal{A}}_\varepsilon(t_{p_j}) }-1\bigr).
\end{equation}
Recalling the definition of $\mathcal{C}$ in Eq. \eqref{Cquantity}, we observe that to compute the ${\bf E}^*$ expectation,
\emph{i.e.} the expectation with respect to the law of ${\bar \omega}_{\delta,e,\varepsilon}(s)$ as in the proof of
Theorem \ref{theoremIDdiff}, we need to compute
\begin{equation}
{\bf E}^* \Bigl[ \mathcal{C}^l \, \prod\limits_{j=1}^{k} 
\bigl(e^{ \mathcal{A}_\varepsilon(t_{p_j}) + \bar{\mathcal{A}}_\varepsilon(t_{p_j}) }-1\bigr)\Bigr].
\end{equation}
The calculation is done by means of Lemma  \ref{BLemma}. We first write
\begin{equation}
 \mathcal{C}^l  = l!\!\!\!\! \int\limits_{\{t_{n+1}<\cdots < t_{n+l}\}} 
\prod\limits_{i=1}^l e^{\omega_{\mu,1,\varepsilon}(t_{n+i})}
\bigl(e^{\mathcal{A}_\varepsilon(t_{n+i})+\bar{\mathcal{A}}_\varepsilon(t_{n+i})}-1\bigr)\,dt_{n+1}\cdots dt_{n+l}
\end{equation}
so that it remains to calculate
\begin{equation}
{\bf E}^* \Bigl[  \prod\limits_{i=1}^l 
\bigl(e^{\mathcal{A}_\varepsilon(t_{n+i})+\bar{\mathcal{A}}_\varepsilon(t_{n+i})}-1\bigr)\,
\prod\limits_{j=1}^{k} 
\bigl(e^{ \mathcal{A}_\varepsilon(t_{p_j}) + \bar{\mathcal{A}}_\varepsilon(t_{p_j}) }-1\bigr)\Bigr].
\end{equation}
We can now apply Lemma \ref{BLemma} with $n=k+l$ provided we know the smallest and largest $t$s.
As $t_1<\cdots<t_n$ and $t_{n+1}<\cdots < t_{n+l}$ by construction, this expectation equals
\begin{equation}
\prod\limits_{i=1}^l 
\bigl(e^{\mathcal{A}_\varepsilon(t_{n+i})}-1\bigr)\,
\prod\limits_{j=1}^{k} 
\bigl(e^{ \mathcal{A}_\varepsilon(t_{p_j}) }-1\bigr)
\end{equation}
plus 
\begin{equation}
\delta \,\rho_{e,\varepsilon} \Bigl(  \max\{t_{p_k}, t_{n+l}\} - \min\{t_{p_1}, t_{n+1}\}\Bigr)
 \Bigl(\sigma^2 +  \!\!\!\!\!\!\int\limits_{\mathbb{R}\setminus \{0\}} \!\! (e^u-1)^2 d\mathcal{M}(u)\Bigr)+o(\delta), 
\end{equation}
if $k+l=2,$ and 
\begin{equation}
\delta \rho_{e,\varepsilon} \Bigl(  \max\{t_{p_k}, t_{n+l}\} - \min\{t_{p_1}, t_{n+1}\}\Bigr) \int\limits_{\mathbb{R}\setminus \{0\}}  (e^u-1)^{k+l} d\mathcal{M}(u) + o(\delta),
\end{equation}
if $k+l>2.$
Having computed the ${\bf E}^*$ expectation, the rest of the argument is the same as in the proof of Theorem \ref{theoremIDdiff}.
\end{proof} 

\section{Conclusions}
\noindent We have presented a theory of intermittency differentiation for a general class of 1D infinitely divisible
multiplicative chaos measures on the interval including the canonical measures of Bacry-Muzy as a special case.
The rule of intermittency differentiation is an exact, non-local, Feynman-Kac equation that expresses the intermittency derivative of 
the expectation of a test function of the total mass on one or more subintervals of the unit interval in terms of the derivatives 
of the function and the L\'evy-Khinchine formula of the underlying distribution. 
The equation is based on the intermittency invariance of the underlying infinitely divisible field 
that we established in full generality in this paper. This invariance is a novel technical devise
that substitutes for the non-existent Markov property of the underlying field and allows one to derive a Feynman-Kac equation
for the distribution of the total mass by considering a stochastic flow in intermittency (as opposed to time in the classical framework
of diffusions). The intermittency invariance gives two ways of evaluating the limit of the flow, which results in the differentiation rule.
The first way is the backward Kolmogorov equation for L\'evy processes, the second way involves detailed analysis of certain 
infinite series expansions, combined with a key combinatorial property of the measure that we derived in the paper. Our
analysis of these expansions is exact at the level of formal power series but not mathematically rigorous as we have not examined
their convergence properties.

Our approach naturally extends to higher derivatives. We have identified a class of non-local functionals of the limit measure
generalizing its total mass that are invariant under intermittency differentiation and derived the corresponding differentiation rule
for them. This rule allows one to compute all higher order derivatives at zero intermittency in principle. These 
non-local functionals can be thought of as measure changes of the total mass. The associated Girsanov transformation
is only known in the case of GMC and left an an open question in the general infinitely divisible case.
 
We have illustrated the rule of intermittency differentiation with two examples. The first is that of positive integer
moments of the total mass of the measure. Given the known multiple integral representation of these moments,
we rigorously proved that the intermittency differentiation rule for the moments coincides with 
the formula for the intermittency derivative that follows from the multiple integral representation.   
In the second example we derived a formula for the covariance of the total mass from the
differentiation rule for two subintervals. 

The intermittency differentiation rule is the first step towards a perturbative expansion of the distribution of the total mass and, more generally,
the dependence structure of the limit measure in powers of intermittency, \emph{i.e.} the high temperature expansion. 
We have limited ourselves in this paper to the differentiation rule for the first derivative and evaluated it explicitly. 
The computation of higher order derivatives is technically more difficult 
and left to future research as is the problem of renormalizability of the full intermittency expansion. Finally,
we also want to mention the very interesting open problem of computing positive integer moments explicitly,
\emph{i.e.} computing the generalized Selberg integral corresponding to the underlying infinitely divisible distribution
and intensity measure that is defined in the paper. 

%%\subsection*{Acknowledgment}
%%The author wishes to thank the referee for helpful suggestions that improved the readability of the paper. 

%%Our approach can be naturally extended to higher derivatives by applying our technique to a class of non-local functionals
%%of the total mass that we introduced in the paper.  To illustrate this point, we stated the general differentiation
%%rule that allows one to compute all higher order derivatives in the special case of Gaussian Multiplicative Chaos. 

%%\section*{References}

\appendix

\section{Bacry-Muzy construction with a general intensity measure}
%%\setcounter{equation}{0}  % reset counter
 
%%\section{An extension of the Bacry-Muzy construction}
\noindent In this section we will review and somewhat extend  the more technical aspects  the infinitely divisible multiplicative chaos (IDMC) construction of Bacry and Muzy \cite{BM1} and \cite{BM}. Our setup, which we first
introduced in \cite{MeIntRen} in the gaussian case, is slightly more general than that of Bacry-Muzy \cite{BM1}, \cite{BM} in that we keep their conical set construction but allow
for a general intensity measure, subject to a positivity condition. %%In particular, we show that the intermittency invariance
%%of the underlying infinitely divisible (ID) field, which we first established in \cite{Me5} in the canonical Bacry-Muzy case,
%%extends to the general framework of this paper. 

The starting point is an infinitely divisible (ID) independently scattered random measure
$P$ on the time-scale plane $\mathbb{H}_+=\{(t, \, l), \, \, l>0\},$
distributed uniformly with respect to some positive intensity measure $\rho.$
The existence of such random measures is established in \cite{RajRos}.
This means that $P(A)$ is ID for measurable subsets
$A\subset\mathbb{H}_+,$ $P(A)$ and $P(B)$ are independent if $A\bigcap B=\emptyset,$ 
and
\begin{equation}
{\bf{E}} \left[ e^{i q P(A)} \right]=e^{\mu\phi(q)\rho(A)},
\,\,\,q\in\mathbb{R},
\end{equation}
where $\mu>0$ is the intermittency parameter and $\phi(q)$ is the logarithm of the characteristic
function of the underlying ID distribution as given by the L\'evy-Khinchine formula in Eq. \eqref{phi}.
The meaning of the normalization $\phi(-i)=0$ is that ${\bf E} \bigl[e^{P(A)}\bigr]=1$
for all measurable subsets $A\subset\mathbb{H}_+.$ The spectral function $\mathcal{M}(u)$ is
continuous and non-decreasing on $(-\infty, 0)$ and $(0, \infty)$, and satisfies the integrability and limit
conditions $\int_{[-1,1] \setminus \{0\}} u^2
d\mathcal{M}(u)<\infty$ and $\lim\limits_{u\rightarrow\pm\infty}
\mathcal{M}(u)=0.$ We will further assume that $\mathcal{M}(u)$
decays at infinity fast enough so that all integrals with respect to it that we need
converge, which restricts the class of permissible spectral
functions. Next, following \cite{Pulses} and \cite{Schmitt}, Bacry and Muzy \cite{BM} introduce special
conical 
sets $\mathcal{A}_{\varepsilon}(u)$ in the time-scale
plane defined by
\begin{equation}
\mathcal{A}_{\varepsilon}(u) = \left\{(t,l) \,\,\Big\vert\,\,
|t-u|\leq\frac{l}{2} \,\,\text{for}\,\,\varepsilon\leq l\leq 1
\,\,\text{and}\,\,|t-u|\leq\frac{1}{2}\,\,\text{for}\,\,l\geq
1\right\}.
\end{equation}
The sets
$\mathcal{A}_{\varepsilon}(u)$ and
$\mathcal{A}_{\varepsilon}(v)$ intersect iff $|u-v|<1.$ 

Our choice of the intensity measure will be somewhat more general
than what was originally proposed by Bacry and Muzy \cite{BM}.
Let 
\begin{equation}\label{ansatz}
\rho(dt \, dl)=\frac{f(l)}{l^2}\,dt\,dl,
\end{equation}
where the function $f(l)$ is defined by
\begin{gather}
\frac{f(l)}{l^2} = -\frac{d^2}{dl^2} \log r(l), \; l\in (0, 1), \label{fr} \\
f(l) = \frac{d}{dz}\Big\vert_{z=1} \log r(z),\;  l\geq 1,\label{fr2}
%%\int_1^\infty \frac{f(l)}{l^2} dl  = 
\end{gather}
in terms of some function $r(t)$ that satisfies the properties
\begin{gather}
r(t) \; \text{is positive, smooth and even} \; \text{on} \;(-1,\,0) \cup (0,\,1),  \\
%%r(t)\in (0,1) \;\text{for} \; t\in (0, 1), \\
\lim\limits_{t\rightarrow 0^+}\;t\frac{d}{dt} \log r(t)=1 \label{derivbound},
\end{gather}
and assume that $f(l)$ as defined in Eqs. \eqref{fr} and \eqref{fr2} is positive.
For example, the canonical choice of Bacry and Muzy corresponds to
\begin{align}
r(t)=&|t|, \\
f(l)=&1.
\end{align}
Let the function $\rho_{\varepsilon}(u, v)$ denote the intensity measure of intersections
of the conical sets
\begin{equation}
\rho_{\varepsilon}(u, v) =
\rho\left(\mathcal{A}_{\varepsilon}(u)\bigcap\mathcal{A}_{\varepsilon}(v)\right).
\end{equation}
Clearly, $\rho_{\varepsilon}(u, v)$ is an even function of $u-v$
so that we can write $\rho_{\varepsilon}(u,
v)=\rho_{\varepsilon}(|u-v|).$ It is easy to show, cf. the Appendix of \cite{MeIntRen}, that it is given by
\begin{equation}\label{rhogeneral}
\rho_{\varepsilon}(u) =
\begin{cases}
-\log r(u)& \, \text{if $\varepsilon\leq |u|\leq 1$}, \\
- \log r(\varepsilon) + \bigl(1-\frac{|u|}{\varepsilon}\bigr) \varepsilon\frac{d}{d\varepsilon} \log r(\varepsilon) & \, \text{if
$|u|<\varepsilon$},
\end{cases}
\end{equation}
and it is identically zero for $|u|>1.$
It is clear that
$u\rightarrow P\left(\mathcal{A}_{\varepsilon}(u)\right)$
is a stationary, ID process such that 
$P\left(\mathcal{A}_{\varepsilon}(u)\right)$ and $P\left(\mathcal{A}_{\varepsilon}(v)\right)$ are dependent iff $|u-v|<1.$ 
One can show that with probability one, the process
$u\rightarrow P\left(\mathcal{A}_{\varepsilon}(u)\right)$ has right-continuous
trajectories with finite left limits.

Given these preliminaries, the IDMC measure $M_{\mu}(dt)$ on the interval $[0,\,1]$ 
associated with $\phi(q)$ at intermittency $\mu$ is the zero regularization scale limit $\varepsilon\rightarrow 0$ of
finite scale random measures that are
defined to be the exponential functional of the $u\rightarrow P\left(\mathcal{A}_{\varepsilon}(u)\right)$ process.
To simplify notations, let
\begin{equation}\label{omegaprocess}
\omega_{\mu,\varepsilon}(u) \triangleq P\bigl(\mathcal{A}_{\varepsilon}(u)\bigr).
\end{equation}
Then, the theorem of Bacry and Muzy states that the limit
\begin{equation}
M_{\mu}(a, b)=\lim\limits_{\varepsilon\rightarrow 0}\int\limits_a^b \exp\bigl(\omega_{\mu,\varepsilon}(u)\bigr) \, du,
\end{equation}
exists in the weak a.s. sense. It was formally established in
\cite{BM1} based on the theory of Kahane \cite{K2} using the normalization of $\phi(q)$ and the property of $P$ of
being independently scattered, which guarantee the martingale property of the construction,
\begin{equation}
{\bf E}\left[M_{\mu, 
\varepsilon'}(a, b)\,\,\vert\,\,\mathcal{F}_\varepsilon\right] =
M_{\mu, \varepsilon}(a, b), \,\,\varepsilon'<\varepsilon,
\end{equation}
where $\mathcal{F}_\varepsilon$ is the sigma algebra generated by
$P(dt\,dl), \,\,l>\varepsilon.$
The limit measure is then
non-degenerate in the sense of ${\bf E}[M_{\mu}(a, b)]=|b-a|$
under the assumption of Eq. \eqref{nondeg}.

The best known analytical handle on the Bacry-Muzy construction is given in the following fundamental lemma due to \cite{BM1},
which we state in the slightly greater generality than the original as we allow for $f(l)$ as in Eqs. \eqref{fr} and \eqref{fr2}.
\begin{lemma}[Main lemma]\label{main}
Given $t_1\leq\cdots\leq t_n$ and $q_1,\cdots, q_n,$ the joint
characteristic function of $\omega_{\mu,\varepsilon}(t_j),$
$j=1\cdots n,$ is
\begin{equation}\label{mainchar}
{\bf E}\Biggl[\exp\Bigl(i\sum_{j=1}^n q_j \omega_{\mu,\varepsilon}(t_j)\Bigr)\Biggr] = \exp\Bigl(\mu\sum_{p=1}^n
\sum_{k=1}^p \alpha_{p,k} \,\rho_{\varepsilon}(t_p-t_k)\Bigr),
\end{equation}
where $\rho_\varepsilon(u)$ is defined in Eq. \eqref{rhogeneral} and
the coefficients $\alpha_{p,k}$ are given in terms of $\phi(q)$ by
\begin{equation}\label{alpha}
\alpha_{p,k} = \phi(r_{k,p}) + \phi(r_{k+1,p-1}) - \phi(r_{k,p-1}) - \phi(r_{k+1,p}),\; 
\end{equation}
and $r_{k, p} = \sum_{m=k}^p q_m$ if $k\leq p$ and zero otherwise. In addition,
\begin{equation}\label{sumID}
\sum_{p=1}^n \sum_{k=1}^p \alpha_{p,k} = \phi\Bigl(\sum_{j=1}^n
q_j\Bigr).
\end{equation}
\end{lemma}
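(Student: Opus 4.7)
The plan is to exploit the independently scattered property of $P$ by decomposing the union $\bigcup_{j=1}^n \mathcal{A}_\varepsilon(t_j)$ into disjoint atoms. For each nonempty $S \subseteq \{1,\ldots,n\}$, set $A_S \triangleq \bigcap_{j \in S} \mathcal{A}_\varepsilon(t_j) \setminus \bigcup_{j \notin S} \mathcal{A}_\varepsilon(t_j)$. These atoms partition the union, and on $A_S$ the indicator sum $\sum_j q_j \mathbf{1}_{\mathcal{A}_\varepsilon(t_j)}$ collapses to the constant $r_S := \sum_{j \in S} q_j$. By independent scattering, the joint characteristic function becomes $\exp\bigl(\mu \sum_{S} \phi(r_S)\,\rho(A_S)\bigr)$, and it remains to compute $\rho(A_S)$ explicitly and reassemble the result.

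The first key reduction is a convexity property of the cones: for $t_i < t_j < t_k$, any $(s,l) \in \mathcal{A}_\varepsilon(t_i) \cap \mathcal{A}_\varepsilon(t_k)$ satisfies $[t_i, t_k] \subseteq [s-l/2,\, s+l/2]$, which forces $|s - t_j| \leq l/2$ and hence $(s,l) \in \mathcal{A}_\varepsilon(t_j)$. This immediately yields $A_S = \emptyset$ whenever $S$ is not a consecutive block, and for any nonempty $T$, $\bigcap_{j \in T} \mathcal{A}_\varepsilon(t_j) = \mathcal{A}_\varepsilon(t_{\min T}) \cap \mathcal{A}_\varepsilon(t_{\max T})$, whose $\rho$-measure is exactly $\rho_\varepsilon(t_{\max T} - t_{\min T})$ from \eqref{rhogeneral}. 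Only consecutive blocks $S = \{k, k+1, \ldots, p\}$ therefore contribute to the exponent.

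Next I will apply inclusion-exclusion in the form $\rho(A_S) = \sum_{T \supseteq S} (-1)^{|T \setminus S|} \rho\bigl(\bigcap_{j \in T} \mathcal{A}_\varepsilon(t_j)\bigr)$. The previous step restricts the non-vanishing $T$'s to $\{k', \ldots, p'\}$ with $k' \in \{k-1, k\}$ and $p' \in \{p, p+1\}$, producing the four-term formula
\[
\rho(A_{\{k,\ldots,p\}}) = \rho_\varepsilon(t_p - t_k) - \rho_\varepsilon(t_p - t_{k-1}) - \rho_\varepsilon(t_{p+1} - t_k) + \rho_\varepsilon(t_{p+1} - t_{k-1}),
\]
under the boundary convention $\rho_\varepsilon(t_p - t_0) = \rho_\varepsilon(t_{n+1} - t_k) = 0$ (encoding absent boundary cones). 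Substituting into $\mu \sum_{k \leq p} \phi(r_{k,p})\,\rho(A_{\{k,\ldots,p\}})$ and swapping the order of summation to collect the coefficient of each $\rho_\varepsilon(t_p - t_k)$ gives precisely the discrete mixed second difference $\phi(r_{k,p}) + \phi(r_{k+1,p-1}) - \phi(r_{k,p-1}) - \phi(r_{k+1,p}) = \alpha_{p,k}$ (with the convention $\phi(r_{a,b}) = 0$ when $a > b$, since then $r_{a,b} = 0$ and $\phi(0) = 0$), which establishes \eqref{mainchar}.

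For the sum identity \eqref{sumID}, I will recognize $\alpha_{p,k}$ as the discrete mixed difference of $f(k,p) := \phi(r_{k,p})$. Summing over $k \in \{1, \ldots, p\}$ telescopes the inner sum to $f(1,p) - f(1,p-1)$ (the endpoint contributions $f(p+1,p)$ and $f(p+1,p-1)$ vanish by the convention), and the remaining sum over $p$ telescopes to $f(1,n) - f(1,0) = \phi\bigl(\sum_{j=1}^n q_j\bigr)$. The main obstacle will be the careful bookkeeping of the boundary conventions when $k = 1$ or $p = n$, and the verification that the geometric intersection formula $\rho\bigl(\bigcap_T \mathcal{A}_\varepsilon(t_j)\bigr) = \rho_\varepsilon(t_{\max T} - t_{\min T})$ together with the closed form \eqref{rhogeneral} of $\rho_\varepsilon$ remains valid for the general $f(l)$ permitted by the positivity assumption; once these are in place, the combinatorics reduces to elementary inclusion-exclusion and telescoping.
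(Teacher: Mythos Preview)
Your proof is correct and complete. The paper itself does not prove this lemma: it is stated in the Appendix and attributed to Bacry and Muzy \cite{BM1}, with the remark that the formulation here allows a slightly more general intensity measure $f(l)$. Your argument is essentially the standard Bacry--Muzy proof, and the steps you outline---decomposition into disjoint atoms $A_S$, the convexity/nesting property of the cones that forces $A_S=\emptyset$ unless $S$ is a consecutive block, inclusion--exclusion to express $\rho(A_{\{k,\ldots,p\}})$ as a four-term second difference in $\rho_\varepsilon$, and the dual Abel/telescoping summation to read off $\alpha_{p,k}$---are exactly the right ones.

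One small point of exposition: your sentence ``the previous step restricts the non-vanishing $T$'s to $\{k',\ldots,p'\}$ with $k'\in\{k-1,k\}$ and $p'\in\{p,p+1\}$'' is slightly misleading as written, since \emph{every} superset $T\supseteq S$ contributes a nonzero term $\rho_\varepsilon(t_{\max T}-t_{\min T})$ to the inclusion--exclusion sum. What actually happens is that, after grouping the $T$'s by $(\min T,\max T)=(a,b)$, the alternating sign sum over all $T$ with that fixed $(a,b)$ vanishes unless $a\in\{k-1,k\}$ and $b\in\{p,p+1\}$ (by the usual $\sum_{R\subseteq X}(-1)^{|R|}=\mathbf{1}[X=\emptyset]$ identity). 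This is clearly what you intend, and the resulting four-term formula is correct; just tighten that one sentence. The telescoping verification of \eqref{sumID} is fine as stated.
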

The significance of this lemma cannot be overemphasized as it
implies the self-similarity
property of the limit measure, determines its positive integer moments, and
is the source of all known invariances of the $\omega_{\mu,
\varepsilon}(t)$ process. 
 
A multiple integral representation of the positive integer moments of the total mass of the limit measure can be written down explicitly for the general $r(t).$ Given $d(m)$ as in Eq. \eqref{d}, 
the $n$th moment of the total mass is given by a generalized Selberg integral of dimension $n.$
Let $0\leq a<b\leq 1$ and $n$ satisfy Eq. \eqref{finmom}. 
\begin{equation}\label{rsinglemomformula}
{\bf E}\Bigl[\Bigl(\int\limits_a^b M_\mu(dt)\Bigr)^n\Bigr] = n!
\int\limits_{\{a<t_1<\cdots<t_n<b\}} \prod\limits_{k<p}^n r(t_p-t_k)^{-\mu\,d(p-k)}
\,dt^{(n)}.
\end{equation}
The argument is based on a simple application of Lemma \ref{main} in the form of the identity
\begin{equation}\label{momrhoidentity}
\exp\Bigl(\sum\limits_{k<p}^n \mu\,d(p-k) \rho_\varepsilon(t_p-t_k)\Bigr) = {\bf E}\Bigl[
e^{ \omega_{\mu,\varepsilon}(t_1)+\cdots+\omega_{\mu,\varepsilon}(t_n)}\Bigr] 
\end{equation}
for any $0<t_1<\cdots<t_n<1,$
and Fubini's theorem. The coefficients $d(m)$ have the important
property
\begin{equation}
\sum\limits_{k<p}^n d(p-k) = \phi(-in).
\end{equation}
This equation combined with $r(t)\thicksim t$ in the limit $t\rightarrow 0,$ cf. Eq. \eqref{derivbound}, implies
\begin{equation}\label{momentscaling}
{\bf E}\Bigl[\Bigl(\int_0^t M_\mu(ds)\Bigr)^n\Bigr] \thicksim const\, t^{n-\mu\phi(-in)}, \; t\rightarrow 0,
\end{equation}
which we now obtained without relying on the self-similarity property of the measure, cf. Eq. \eqref{selfsim} below.
 
We note parenthetically that the formula in Eq. \eqref{rsinglemomformula} suggests how to define the Selberg
integral in the general ID case.
\begin{align}
S_n\bigl(\lambda, \lambda_1,\lambda_2\bigr) \triangleq &\int\limits_{0<t_1<\cdots<t_n<1} \prod\limits_{i=1}^n  r(t_i)^{\lambda_1 d(i)}r(1-t_i)^{\lambda_2 d(n-i+1)}\times \nonumber \\ &\times \prod\limits_{k<p}^n r(t_p-t_k)^{2\lambda\,d(p-k)}
\,dt^{(n)} \label{Sdef}
\end{align}
for generally complex $\lambda,$ $\lambda_1,$ and $\lambda_2.$ Some of its properties in the general ID case are derived in \cite{MeLMP}.

We end this review with a brief comment about the origin of the self-similarity property of the measure, 
which is the primary motivation for the IDMC construction as we explained in the Introduction.
This result is specific to the Bacry-Muzy intensity measure in Eq. \eqref{rho}. 
%%Next, we proceed to the self-similarity and scaling laws of the limit measure, 
Introduce a L$\acute{\text{e}}$vy process (a stochastic process with
stationary, independent increments) $\delta\rightarrow X(\delta)$
that is independent of the $t\rightarrow \omega_{\mu, 
\varepsilon}(t)$ process and defined in terms of the ID distribution
associated with $\phi(q)$ as follows
\begin{equation}\label{Xprocessdef}
{\bf E}\left[e^{i q X(\delta)}\right] = e^{\delta\phi(q)}, \,\,
X(0)=0.
\end{equation}
The existence and uniqueness of $X(\delta)$ follow from the general
theory of ID processes, confer \cite{Bertoin}. 
Then, as a corollary of Lemma \ref{main}, there holds the
following invariance of the field $\omega_{\mu, \varepsilon}(t)$ with respect to the regularization scale parameter,
which is understood to be the equality in law of stochastic processes in $t$ on the interval $t\in[0, 1].$
\begin{equation}
X(\delta)+\omega_{\mu, \varepsilon}(t) = \omega_{\mu, \varepsilon e^{-\delta/\mu}}(t e^{-\delta/\mu}).
\end{equation}
An elementary change of variables argument given in \cite{BM}
shows that this invariance implies 
stochastic self-similarity of the limit process,
%%Then, the equation of self-similarity of the limit measure is
\begin{equation}\label{selfsim}
M_{\mu}(0, t)= t\exp\bigl(X(-\mu\log t)\bigr) M_{\mu}(0, 1),
\end{equation}
understood as the equality of random variables in law at fixed
$t<1.$ It must be emphasized that self-similarity alone
does not capture the
law of the total mass $M_{\mu}(0,1)$ but only of $M_{\mu}(0, t)$ in terms of
$M_{\mu}(0, 1).$  
%%Let the process $\delta\rightarrow X(\delta)$ be as in Eq. \eqref{Xprocessdef}. 
Hence, the multiplier in Eq. \eqref{cascade} is given by 
\begin{equation}
W_\gamma = \gamma e^{X(-\mu\log \gamma)},
\end{equation}
and is log-infinitely divisible as expected.

%%\begin{equation}
%%M_{\mu}(0, t)= t\exp\bigl(X_{-\mu\log t}\bigr) M_{\mu}(0, 1),
%%\end{equation}
%%understood as the equality of random variables in law at fixed $t<1.$ 

%%    

\end{document}